\documentclass{amsart}
\usepackage[utf8]{inputenc}
\usepackage[legalpaper, margin=4cm]{geometry}
\usepackage{amsfonts}
\usepackage{amsmath}
\usepackage{amsthm}
\usepackage{enumitem}
\usepackage{amssymb}
\usepackage{ytableau}
\usepackage{graphicx}
\usepackage{caption}
\usepackage{afterpage}
\usepackage{tikz}
\usepackage{array}
\usepackage{makecell}
\usepackage{diagbox}
\usepackage{multirow}
\usepackage{multicol}
\usepackage{float}
\usepackage{subcaption}

\makeatletter
\newcommand*\bigcdot{\mathpalette\bigcdot@{.5}}
\newcommand*\bigcdot@[2]{\mathbin{\vcenter{\hbox{\scalebox{#2}{$\m@th#1\bullet$}}}}}
\makeatother

\usepackage[sorting = none]{biblatex}

\addbibresource{paper.bib}

\captionsetup[sub]{font=small,labelfont={sf}}

\author{Thomas Bouchet}

\newtheorem{lemma}{Lemma}
\newtheorem{prop}{Proposition}
\newtheorem{theorem}{Theorem}
\newtheorem*{theorem*}{Theorem}
\newtheorem{cor}{Corollary}
\newtheorem*{cor*}{Corollary}
\theoremstyle{definition}
\newtheorem{definition}{Definition}

\newtheorem{rem}{Remark}
\usepackage{colortbl}
\usepackage{listings}
\lstset{basicstyle=\ttfamily, keepspaces=true}
\lstset{columns=fullflexible}
\newcommand{\fonction}[5]{\begin{array}{c|ccc}
#1: & #2 & \longrightarrow & #3 \\
    & #4 & \longmapsto & #5 \end{array}}
\newcommand{\Pic}{\mathrm{Pic}}

\definecolor{mylinkcolor}{rgb}{0.0,0.0,0.75}
\definecolor{myurlcolor}{rgb}{0.0,0.0,0.75}
\usepackage[
	colorlinks, urlcolor=myurlcolor,citecolor=myurlcolor,linkcolor=mylinkcolor,
	pdfauthor={T. Bouchet},
        pdfusetitle
]{hyperref}

\begin{document}

\title{Invariants of genus 4 curves}

\maketitle

\begin{abstract}
    The present paper gives an explicit classification of the isomorphism classes of non-hyperelliptic genus 4 curves over an algebraically closed field of characteristic 0. 
    A non-hyperelliptic genus 4 curve lies on a quadric in $\mathbb{P}^3$ of rank 3 or 4.
    In the case of rank 3, we give a set of 60 invariants which classify the isomorphism classes, and in the case of rank 4, we find 65 invariants. 
    These invariants are defined by transvectants and can be efficiently computed on a given example.
\end{abstract}

\setcounter{tocdepth}{3}
\tableofcontents

\section{Introduction}

Invariant theory dates back to the 19th century, where the polynomial functions of binary forms invariant by change of variables were extensively studied.
Under the impetus of Cayley, Sylvester, Clebsch and Gordan, much work was done
on the subject. Gordan's algorithm~\cite{gordan} enabled them to find complete systems of invariants for binary forms of degrees up to 8.
This algorithm is based on Clebsch's symbolic method~\cite{clebsch} and the transvectant, 
which is Cayley's $\Omega$-process~\cite[Section 5]{olver} for binary forms.
\medskip

This paper is motivated by the problem of finding an explicit classification of the isomorphism classes of non-hyperelliptic genus 4 curves.
This problem was solved for curves of genus up to 3 (we refer the reader to~\cite{igusa}, \cite{shioda} and \cite{dixmier,ohno}),
so it was natural to try to adress the case of genus 4.
\medskip

It is well known that this problem can be solved for hyperelliptic curves of genus $g$
by finding a complete set of invariants for the algebra of invariants of
binary forms of degree $2g+2$~\cite{lerc-ritz}. For those of genus $g=4$, we have thus to
consider the algebra of invariants of binary forms of degree $10$.
A complete system of generators of this algebra was found by Popoviciu and Brouwer~\cite{popoviciu}.
In~\cite{olive-lerc}, a generating set of invariants for the same algebra is constructed using an improved version of Gordan's algorithm.
\medskip

In this work, any non-hyperelliptic curve of genus 4 is canonically embedded in $\mathbb{P}^3$ as the intersection of an irreducible cubic and an irreducible quadric.
In addition, the quadratic form defining the quadric must be of rank 3 or 4. We will refer to these cases as the rank 3 and rank 4 cases.
\medskip

In this paper, we give an explicit classification of the isomorphism classes of non-hyperelliptic genus 4 curves for the rank 3 and rank 4 cases.
We provide a Magma package for the computation of invariants of genus 4 curves~\cite{Git}.
For the hyperelliptic case, we use the invariants given in~\cite{olive-lerc}. 
\medskip

The paper is organized as follows: in Section~\ref{sec:geometry}, we state our problem with our own notation. 
In Section~\ref{sec:tools}, we introduce some technical tools.
Section~\ref{sec:rank4} deals with the rank 4 case, and is subdivided in many subsections to improve readability.
Section~\ref{sec:rank3} focuses on the resolution of the rank 3 case.
\bigskip

Let us now introduce a few notation. Let $K$ be a commutative, algebraically closed field of characteristic 0.
We use the definitions of algebraic groups and rational representations given in~\cite[A.1]{kemper}.
Let $\Gamma$ be an algebraic group, and $V$ a rational representation of $\Gamma$. Let $M\in\Gamma$, $f\in K[V]$, and $v\in V$. We define the action of $\Gamma$ on $K[V]$ to be
\[[M\bigcdot f](v) = f(M^{-1}\bigcdot v)\,.\]

Let $H$ be the algebraic group $\mathrm{SL}_2(K)\times \mathrm{SL}_2(K)$ and $G$
be the algebraic group $H\rtimes
\mathbb{Z}/2\mathbb{Z}$, where $\mathbb{Z}/2\mathbb{Z}$ acts on $H$ by exchanging the two $\mathrm{SL}_2(K)$ factors.
\medskip

Let $m, n\in\mathbb{Z}_{>0}$. We say that \[f = \displaystyle\sum_{\substack{0\leq i \leq m \\ 0\leq j \leq n}}a_{ij}x^iy^{m-i}u^jv^{n-j}\]
is a biform (also referred to as double binary form in~\cite{turnbull}) of bidegree $(m,n)$. Let $W_{m,n}$ be the space of biforms of bidegree $(m,n)$.
We will write $W_m$ for $W_{m,m}$.

The group $H$ acts on $W_{m,n}$ in a natural way: for any $f\in W_{m,n}$, $(M_1,M_2)\in H$, we let
\[\big[(M_1,M_2)\bigcdot f\big]\big((x,y),(u,v)\big):=f\left(M_1^{-1}(x,y), M_2^{-1}(u,v)\right).\]
That action induces an action of $H$ on the coordinate ring $K[W_{m,n}]$: for $I\in K[W_{m,n}]$, $(M_1,M_2)\in H$,
we let
\[\big[(M_1,M_2)\bigcdot I\big](f)=I\big[(M_1^{-1}, M_2^{-1})\bigcdot f\big].\]

The group $\mathbb{Z}/2\mathbb{Z}$ acts on $W_m$ by switching $(x,y)$ and $(u,v)$.
One can see that the corresponding action on the coordinate ring is the exchange of the coefficients $a_{ij}$ and $a_{ji}$.
Hence the group $G$ acts on $W_m$.
\medskip

We say that $f,g\in W_{m,n}$ are $H$-equivalent (resp. $G$-equivalent) if
there exists $M\in H$ (resp. $M\in G$) such that $M\bigcdot f = g$.
\medskip

For clarity, we will often reason with the table of
coefficients of a biform instead of the biform itself.
For any $f\in W_{m,n}$, the $(i,j)$-th coefficient of the table is the coefficient of $x^{m-j}y^ju^{n-i}v^i$ in $f$.
The vanishing coefficients are represented by a blank square, and the non-zero coefficients by a gray square.
\medskip

For example, let $f = 3xu-4yv\in W_1$. Its table representation is\,
\tabcolsep=0.065cm
    \begin{tabular}{|>{\centering}m{.6cm}|>{\centering}m{.6cm}|>{\centering\arraybackslash}m{.6cm}|}
        \cline{2-3}
        \multicolumn{1}{c|}{}& $x$ & $y$ \\
        \hline
        $u$  & \cellcolor{gray!40} & \\
        \hline
        $v$  &  & \cellcolor{gray!40} \\
        \hline
    \end{tabular}\,.
\medskip

Let $f\in W_{m,n}$ and $\alpha\in K$. We say that $x\leftarrow x'-\alpha y$ (resp. $u\leftarrow u'-\alpha v$) is a change of variables to the right (resp. a downwards change of variables).
Indeed, these change of variables transform the table representation of $f$ from left to right and from top to bottom respectively.
Let $\alpha$, $\beta\in K$. We call renormalization the transformation $x\leftarrow\alpha x'$, $y \leftarrow \alpha^{-1} y'$, $u \leftarrow \beta u'$, $v \leftarrow \beta^{-1} v'$.
We note that all the above transformations can be seen as elements of $G$ and $H$. 
\bigskip

Let $f$ be a biform of bidegree $(m,n)$, with generic coefficients $(a_{ij})_{i,j}$. 
Let $C$ be a polynomial in $x,y$, $u,v$, and the coefficients of $f$, such that it is homogeneous in the coefficients of $f$, in $x,y$ and in $u,v$. 
We say that $C$ is a covariant of $f$ with respect to $H$ if for all $M = (M_1, M_2)\in H$, 
we have \[C(M\bigcdot (a_{ij})_{i,j}\,, M_1 (x,y)\,, M_2 (u,v)) = C((a_{ij})_{i,j}\,, (x, y), (u, v))\,.\]

Moreover, if $C$ does not contain any $x,y$ or $u,v$ terms, we say that $C$ is an invariant.
\medskip

Similarly, we define invariants and covariants with respect to $G$ for the elements of $W_m$.
$C$ is a covariant with respect to $G$ if it is a covariant with respect to
$H$, and if it satisfies with regard to the action of $\mathbb{Z}/2\mathbb{Z}$,
\[C((a_{ij})_{i,j}, (x,y), (u,v)) = C((a_{ji})_{i,j}, (u,v), (x,y))\,.\]

This paper contains the following results.
\begin{theorem*}[see Theorem~\ref{thm:secondary-invariants}]
    Let $K$ be an algebraically closed field of characteristic 0. Let $G$ be the group $\mathrm{SL}_2(K)\times \mathrm{SL}_2(K)\rtimes \mathbb{Z}/2\mathbb{Z}$, and $W_3$ be the space of biforms
    of bidegree $(3,3)$. Then the algebra $K[W_3]^{G}$ is generated by $65$ elements. Table~\ref{fig:tab_inv} contains the
    definition of these generators.
\end{theorem*}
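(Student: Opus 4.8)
The plan is to compute the invariant algebra $K[W_3]^G$ in two stages, exploiting the semidirect product structure $G = H \rtimes \mathbb{Z}/2\mathbb{Z}$. First I would determine the $H$-invariant algebra $K[W_3]^H$, and then extract from it the subalgebra fixed by the $\mathbb{Z}/2\mathbb{Z}$-action that swaps the two $\mathrm{SL}_2$ factors. Since $W_3 = W_{3,3}$ is the tensor product of two copies of the space of binary cubics, the classical machinery of transvectants (Cayley's $\Omega$-process, as referenced via~\cite{olver}) applies: every $H$-invariant is a polynomial in iterated transvectants of $f$ with itself, taken in the two sets of variables separately. The concrete task here is to produce an explicit finite generating set, so I would run (an adaptation of) Gordan's algorithm in the bihomogeneous setting, organizing the computation by bidegree in the two sets of variables and bounding the degrees at which new generators can appear.

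Once a generating set for $K[W_3]^H$ is in hand, the second stage handles the $\mathbb{Z}/2\mathbb{Z}$-action. Because $\mathbb{Z}/2\mathbb{Z}$ is finite and reductive (characteristic $0$), I would use the Reynolds operator to project onto invariants, and more usefully decompose $K[W_3]^H$ into the $+1$ and $-1$ eigenspaces of the swap $\sigma$. If $\{P_k\}$ generates $K[W_3]^H$, then the symmetrizations $P_k + \sigma(P_k)$ and the products $P_k \cdot \sigma(P_k)$, together with pairwise products of anti-invariants, generate $K[W_3]^G$; this is the standard description of the invariants of a $\mathbb{Z}/2\mathbb{Z}$-extension. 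I would then prune this spanning set down to a minimal or near-minimal generating system and identify it with the $65$ transvectant-based invariants recorded in Table~\ref{fig:tab_inv}.

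I expect the main obstacle to be the first stage: proving that the proposed finite list actually generates $K[W_3]^H$, rather than merely listing plausible invariants. Verifying generation requires either a termination/completeness argument for the Gordan-style reduction in the biform setting, or an independent check via Hilbert series — computing the Hilbert series of $K[W_3]^H$ (for instance through the Molien–Weyl integral for $\mathrm{SL}_2 \times \mathrm{SL}_2$) and confirming that the subalgebra generated by the candidate invariants has the same Hilbert series. Matching Hilbert series, combined with verifying that no algebraic relations force a smaller subalgebra, is the rigorous way to pin down completeness, and I anticipate this bookkeeping to be the technical heart of the argument.

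Finally, I would confirm that the number $65$ emerges correctly from the construction: after passing to $G$-invariants via symmetrization and verifying that the symmetrized/antisymmetrized products span the whole of $K[W_3]^G$, I would check minimality by exhibiting that each listed generator is not a polynomial in the others (e.g. by degree considerations or by evaluating on well-chosen biforms). The explicit transvectant definitions in Table~\ref{fig:tab_inv} then serve both as the generating set and as the certificate that these invariants are efficiently computable, completing the proof.
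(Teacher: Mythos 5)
Your two-stage skeleton (first $K[W_3]^H$, then its $\mathbb{Z}/2\mathbb{Z}$-fixed subalgebra) matches the paper's, and your instinct that a Hilbert series comparison is the right certificate of completeness is also correct. But there are two genuine gaps. First, your primary route for stage one --- Gordan's algorithm adapted to biforms --- is precisely what the paper explains cannot currently be carried out for bicubic forms: it would require a generating set for the invariants of binary forms of degree $12$, which is not yet known, and the associated linear diophantine systems have too many solutions. Second, and more seriously, your completeness check has no termination mechanism. ``Generate invariants degree by degree until the Hilbert series matches'' only proves generation if you know in advance a degree beyond which no new generators can appear, and you give no way to obtain such a bound. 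The paper obtains it from the Cohen--Macaulay structure: $H$ and $G$ are linearly reductive, so $K[W_3]^H$ and $K[W_3]^G$ are Cohen--Macaulay by Hochster--Roberts; one then constructs an explicit homogeneous system of parameters $I_2, I_{4,1},\ldots, I_{14}$ of $K[W_3]^G$ by computing the nullcone $\mathcal{N}_{W_3}^G=\mathcal{N}_{W_3}^H$ with the Hilbert--Mumford criterion and explicit orbit normal forms, and verifying that these ten invariants cut out exactly the nullcone. The resulting free-module decomposition turns the numerator of the Hilbert series into the list of degrees of the secondary invariants, which is exactly the degree bound that makes the search finite. This nullcone/hsop step is the technical heart of the argument and is entirely absent from your proposal.

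For the second stage, your symmetrization recipe (invariant generators plus pairwise products of anti-invariants) would generate $K[W_3]^G$ in principle, but it is coarser than what the paper does and leaves you with a pruning problem. The paper instead chooses the secondary invariants --- the module generators of $K[W_3]^H$ over the hsop --- to be $\sigma$-eigenvectors, so that $K[W_3]^G$ is read off as the direct summand spanned by the $\sigma$-fixed ones. This simultaneously yields the Hilbert series of $K[W_3]^G$ and the degree bound certifying that the $65$ invariants of Table~\ref{fig:tab_inv} generate.
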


\begin{cor*}
    An explicit classification of isomorphism classes of non-hyperelliptic curves of genus 4 of rank 4
    is given by Table~\ref{fig:tab_inv}.
\end{cor*}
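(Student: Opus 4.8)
The plan is to combine the geometric dictionary sketched in Section~\ref{sec:geometry} with the invariant-theoretic content of Theorem~\ref{thm:secondary-invariants}. First I would recall how rank $4$ curves correspond to biforms. A non-hyperelliptic genus $4$ curve embeds canonically in $\mathbb{P}^3$ as the transverse intersection of its unique quadric $Q$ with an irreducible cubic. In the rank $4$ case $Q$ is smooth, and the Segre isomorphism $Q\cong\mathbb{P}^1\times\mathbb{P}^1$ identifies the degree-one part of the homogeneous coordinate ring of $\mathbb{P}^3$ with the space of bilinear forms on the two factors. Restricting the cubic to $Q$ then yields a biform of bidegree $(3,3)$, that is, an element $f\in W_3$, well defined up to the choice of Segre coordinates, and adjunction gives back genus $(3-1)(3-1)=4$.

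Next I would match the ambiguity in that choice with the action of $G$. Any isomorphism between two such curves extends, through the canonical embedding, to an automorphism of $\mathbb{P}^3$ carrying one quadric to the other; on $Q$ this is an element of $\mathrm{Aut}(Q)\cong\mathrm{PGL}_2\times\mathrm{PGL}_2\rtimes\mathbb{Z}/2\mathbb{Z}$, with the last factor exchanging the two rulings. Since the Segre embedding is by $|\mathcal{O}(1,1)|$, every such automorphism indeed extends to $\mathbb{P}^3$, and lifting to $\mathrm{SL}_2$ recovers exactly the action of $G$ on $W_3$ defined above. Hence two rank $4$ genus $4$ curves are isomorphic if and only if their associated biforms are $G$-equivalent, so classifying isomorphism classes amounts to classifying $G$-orbits in the locus $U\subseteq W_3$ of biforms whose zero locus on $Q$ is a smooth genus $4$ curve.

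It then remains to show that the generators in Table~\ref{fig:tab_inv} separate these orbits. As $G$ is linearly reductive in characteristic $0$, classical geometric invariant theory ensures that $K[W_3]^{G}$ separates closed orbits, and that the generators of Theorem~\ref{thm:secondary-invariants} agree on $f,g\in W_3$ precisely when the closures $\overline{G\bigcdot f}$ and $\overline{G\bigcdot g}$ meet. The key step is therefore to verify that every $f\in U$ is a stable point: a smooth curve has finite automorphism group, and one must deduce that $G\bigcdot f$ is closed. Granting this, two distinct orbits in $U$ have disjoint closures, so equality of all $65$ generators forces $G$-equivalence, and Theorem~\ref{thm:secondary-invariants} upgrades the generating set into an honest classification.

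The main obstacle I anticipate is precisely this stability/separation step, namely ruling out that two non-isomorphic smooth curves share every invariant because their orbits degenerate to a common point in the closure. I would handle it by showing that smooth curves correspond to stable points with closed orbits—so that the invariant map is injective on orbits of $U$—which is the heart of converting the list of transvectant invariants from Table~\ref{fig:tab_inv} into a genuine classification of isomorphism classes.
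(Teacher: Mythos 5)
Your overall route is the same as the paper's: identify the rank-4 quadric with $\mathbb{P}^1\times\mathbb{P}^1$ via the Segre map, translate isomorphism of curves into equivalence of bicubic forms under $\mathrm{Aut}(\mathbb{P}^1_K\times\mathbb{P}^1_K)\simeq\mathrm{PGL}_2(K)\times\mathrm{PGL}_2(K)\rtimes\mathbb{Z}/2\mathbb{Z}$, and then use the generating set of Theorem~\ref{thm:secondary-invariants} to separate orbits. There is, however, one genuine gap in your final criterion. The biform attached to a curve is not well defined as an element of $W_3$: the cubic $E$ is determined by $\mathcal{C}$ only up to a nonzero scalar and up to multiples of $Q$ (cf.\ Proposition~\ref{prop:isomorphismP3}), and lifting an automorphism of the quadric from $\mathrm{PGL}_2\times\mathrm{PGL}_2$ to $\mathrm{SL}_2\times\mathrm{SL}_2$ introduces further scalar factors. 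Consequently two curves with biforms $f,f'$ are isomorphic if and only if $f$ is $G$-equivalent to $\lambda f'$ for some $\lambda\in K^\times$, i.e.\ if and only if $I_j(f')=\lambda^{d_j}I_j(f)$ for all $j$ with $d_j=\deg I_j$; the invariants of Table~\ref{fig:tab_inv} classify curves as a point of the associated weighted projective space, not through literal equality of the $65$ values (already $E$ and $2E$ define the same curve but different invariants). This rescaling is exactly the content of the lemma the paper proves before computing $K[W_3]^G$, and your statement that ``equality of all $65$ generators forces $G$-equivalence'' classifies biforms rather than curves.

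On the separation step you single out as the main obstacle: your instinct is correct that one must rule out two non-isomorphic smooth curves whose orbit closures meet, and your proposed remedy (smooth curves give stable points with closed orbits, so the invariants are injective on their orbits) is the right one. The paper is in fact terser than you are here: it only observes that nullcone elements define singular curves, hence smooth curves are semistable, and then asserts that a generating set ``separates the orbits which are not unstable.'' Neither your sketch nor the paper actually proves closedness of the orbit of a smooth bicubic, so if you pursue your version you should supply that argument explicitly, for instance via the Hilbert--Mumford criterion of Proposition~\ref{Hilbert-Mumford} by showing that $\mu(f,\lambda)<0$ for every one-parameter subgroup $\lambda$ when $f$ defines a smooth curve. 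Doing so would make your write-up strictly more complete than the paper's on this point, while the weighted-scaling correction above is needed to make the classification statement itself accurate.
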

\medskip

\begin{theorem*}[\protect{\cite[Table~6]{olive-gordan}}]
    Let $K$ be an algebraically closed field of characteristic 0, and $V_n$ be the space of binary forms of degree $n$ over $K$.
    The algebra $K[V_4\oplus V_6]^{\mathrm{SL}_2(K)}$ is generated by 60 invariants.
\end{theorem*}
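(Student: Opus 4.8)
The plan is to obtain the invariants by first controlling the full algebra of covariants and then extracting its order-zero part. First I would recall the classical complete systems of covariants of the two summands taken separately: the binary quartic $V_4$ has the well-known system of five covariants (the form itself, its Hessian, the degree-three Jacobian covariant, and the two basic invariants of degrees $2$ and $3$), while the binary sextic $V_6$ has the classical Clebsch system of twenty-six covariants, among which five are invariants. Both systems are produced by iterated transvectants, i.e.\ by Cayley's $\Omega$-process recalled in the introduction.

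Next, the joint covariant algebra $K[V_4\oplus V_6\oplus K^2]^{\mathrm{SL}_2(K)}$, where the extra $K^2$ records the variables $(x,y)$, is finitely generated by Gordan's theorem, and Gordan's algorithm (in the improved form of \cite{olive-lerc,olive-gordan}) builds a finite generating set by repeatedly forming transvectants $(P,Q)_r$ of products of covariants already in hand and keeping only those that are not polynomials in the covariants found so far. The key structural reduction is that an invariant is precisely a covariant of order $0$ in $(x,y)$; since every covariant has non-negative order and the order of a product is the sum of the orders, any monomial of order $0$ in a system of covariant generators involves only the order-$0$ generators. Consequently $K[V_4\oplus V_6]^{\mathrm{SL}_2(K)}$ is generated exactly by the invariants appearing in the complete covariant system, so it suffices to run the algorithm, select the order-$0$ outputs, and count them.

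To know when to stop the algorithm and to certify the final count, I would compute the Hilbert series of $K[V_4\oplus V_6]^{\mathrm{SL}_2(K)}$ graded by degree in the coefficients, via the Cayley--Sylvester formula (equivalently a Molien-type integral over $\mathrm{SL}_2(K)$). Comparing, degree by degree, the dimension predicted by this series with the dimension spanned by the products of invariants constructed so far reveals exactly how many genuinely new generators each degree contributes and guarantees that none has been overlooked.

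The main obstacle will be the reduction step itself: for each candidate transvectant one must decide membership in the subalgebra generated by the invariants already found, which is a linear-algebra problem in a graded piece whose dimension grows rapidly with the degree. Pushing this bookkeeping up to the degree where the generating set stabilises, and then proving that the selected family truly generates rather than merely spans the low-degree pieces, is the delicate point; it is precisely the role of the improved Gordan algorithm of \cite{olive-gordan} to keep this computation finite and to deliver the count of $60$.
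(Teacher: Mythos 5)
The paper does not prove this statement itself---it is quoted directly from \cite[Table~6]{olive-gordan}---and your outline (Gordan's algorithm for the joint covariants of $V_4\oplus V_6$ built from the classical systems of the quartic and the sextic via iterated transvectants, extraction of the order-zero generators, and a Cayley--Sylvester/Molien Hilbert-series check to certify the count of $60$) is essentially the strategy of that reference. The only point to keep straight is that the degree-by-degree comparison with the Hilbert series can never by itself certify that no generators occur in higher degrees; the completeness guarantee must come from the termination proof of Gordan's algorithm (or from a homogeneous system of parameters together with Cohen--Macaulayness, which is how the present paper bounds the generator degrees in the rank-$4$ case), a point you correctly identify as the delicate one and defer to \cite{olive-gordan}.
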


\begin{cor*}
    An explicit classification of isomorphism classes of non-hyperelliptic curves of genus 4 of rank 3
    is given by \cite[Table~6]{olive-gordan}.
\end{cor*}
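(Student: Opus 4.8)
The plan is to reduce the classification of rank $3$ curves to a description of the $\mathrm{SL}_2(K)$-orbits on $V_4\oplus V_6$, and then to invoke the quoted theorem that $K[V_4\oplus V_6]^{\mathrm{SL}_2(K)}$ is generated by the $60$ invariants of \cite[Table~6]{olive-gordan}. First I would fix a normal form for the quadric: up to $\mathrm{PGL}_4(K)$ a rank $3$ quadric in $\mathbb{P}^3$ is the cone $Q\colon x_0x_2=x_1^2$ with vertex $P_0=[0:0:0:1]$, and this cone is parametrized by the weighted projective space $\mathbb{P}(1,1,2)$ via $[s:t:w]\mapsto[s^2:st:t^2:w]$. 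The point is that the stabilizer of $Q$ in $\mathrm{PGL}_4(K)$ is exactly the automorphism group of the cone: such an automorphism fixes the vertex $P_0$ (the unique singular point), hence acts on the base conic through some $M\in\mathrm{GL}_2(K)$ on $(s,t)$, and on the fibre coordinate by $w\mapsto \lambda w+q(s,t)$ with $\lambda\in K^\times$ and $q$ a binary quadratic. Two rank $3$ canonical curves are isomorphic precisely when their models differ by such an automorphism.

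Next I would restrict the defining cubic to the cone. Writing $x_0=s^2,\ x_1=st,\ x_2=t^2,\ x_3=w$, a cubic $F$ pulls back to a polynomial $G(s,t,w)$ that is weighted homogeneous of degree $6$; since $Q$ itself pulls back to $0$, the restriction $G$ depends only on the curve and not on the choice of cubic modulo $Q$. Grouping by the power of $w$ gives
\[ G = c\,w^3 + A_2(s,t)\,w^2 + B_4(s,t)\,w + C_6(s,t), \]
with $A_2,B_4,C_6$ binary forms of the indicated degrees. I would then argue $c\neq0$: the vertex $P_0$ lies on $C$ exactly when $c=0$, but $P_0$ lies on every line of the ruling, so $P_0\in C$ would give a base point of the base-point-free $g^1_3$ cut out by the ruling, a contradiction. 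Rescaling $x_3$ normalizes $c=1$, and the cone automorphism $w\mapsto w-\tfrac13 A_2(s,t)$ kills the $w^2$ term, yielding the normal form $G=w^3+B_4(s,t)\,w+C_6(s,t)$. Thus the curve is encoded by the pair $(B_4,C_6)\in V_4\oplus V_6$; this is its trigonal presentation, whose degree $12$ branch divisor is the discriminant $-4B_4^3-27C_6^2$.

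Finally I would track the residual symmetry. The only cone automorphisms preserving this normal form are the substitutions $(s,t)\mapsto M\cdot(s,t)$ together with the rescaling of $w$ needed to restore $c=1$; after absorbing $\det M$ and the grading scalar $(s,t,w)\mapsto(\mu s,\mu t,\mu^2 w)$ (which acts trivially on $\mathbb{P}(1,1,2)$), the effective action on $(B_4,C_6)$ is the standard $\mathrm{SL}_2(K)$-action on $V_4\oplus V_6$. Hence isomorphism classes of rank $3$ curves correspond bijectively to $\mathrm{SL}_2(K)$-orbits on the locus of $V_4\oplus V_6$ parametrizing smooth curves, and since $K[V_4\oplus V_6]^{\mathrm{SL}_2(K)}$ is generated by the $60$ invariants of \cite[Table~6]{olive-gordan}, those invariants furnish the explicit classification.

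I expect the main obstacle to be the careful bookkeeping of the several scalar actions, so that the effective group is exactly $\mathrm{SL}_2(K)$ rather than $\mathrm{GL}_2(K)$ or $\mathrm{PGL}_2(K)$, and the verification that the generating invariants genuinely separate the orbits attached to smooth curves rather than merely generate the invariant ring: this amounts to checking that these orbits are closed with finite stabilizer, i.e.\ that smooth curves correspond to stable points.
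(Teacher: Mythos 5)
Your argument follows essentially the same route as the paper: normalize the rank-$3$ quadric as a cone, identify it with $\mathbb{P}(1,1,2)$ via $[s:t:w]\mapsto[s^2:st:t^2:w]$, pull back the cubic to a weighted sextic, use nonvanishing of the $w^3$-coefficient (the paper notes $f_0=0$ forces a singularity, you argue via the $g^1_3$; both are fine) to reach the normal form $w^3+f_4w+f_6$, and reduce to the $\mathrm{SL}_2(K)$-action on $V_4\oplus V_6$ with the $60$ invariants of \cite[Table~6]{olive-gordan}. The proposal is correct and matches the paper's proof in all essentials.
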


\section{Geometry of the problem}\label{sec:geometry}

Let $X,Y,Z,T$ be coordinates for $\mathbb{P}^3_K$. A curve will always be smooth, reduced and irreducible.
Let $\mathcal{C}$ be the canonical embedding in $\mathbb{P}^3_K$ of a non-hyperelliptic curve of genus 4 over $K$.
Then $\mathcal{C}$ is the complete intersection of a unique irreducible quadric and an irreducible cubic in $\mathbb{P}_{K}^3$~\cite[Example IV.5.2.2]{hartshorne}.
Moreover, the quadratic form that defines the quadric must be of rank 3 or 4.

Let $\mathcal{C}, \mathcal{C}'$ be two non-hyperelliptic curves of genus 4
canonically embedded in $\mathbb{P}_K^3$. The curve $\mathcal{C}$ is isomorphic
to $\mathcal{C}'$ if and only if there exists $f\in\mathrm{Aut}(\mathbb{P}^3_K)\simeq \mathrm{PGL}_4(K)$ such that $f(\mathcal{C})=\mathcal{C}'$.

\begin{prop}\label{prop:isomorphismP3}
    Let $Q, Q'\in K[X,Y,Z,T]$ (resp. $E,E'$) be irreducible homogeneous polynomials of degree 2 (resp.\ degree 3).
    Let $\mathcal{C}$ (resp.\ $\mathcal{C}'$) be the canonical embedding of the non-hyperelliptic curve of genus 4 
    defined over $K$ by $Q$ and $E$ (resp.\ $Q'$ and $E'$).
    Then the curves $\mathcal{C}$ and $\mathcal{C}'$ are isomorphic if and only if there exist $f\in \mathrm{GL}_4(K)$, and $\ell$ a linear form in $X,Y,Z,T$ such that \[
    \left\{\begin{array}{cl}
         f\bigcdot Q &= \hspace{.2cm}Q' \\
         f\bigcdot E &= \hspace{.2cm}\beta E'+\ell Q'\\
    \end{array}\right.\,.\]
\end{prop}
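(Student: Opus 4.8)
The plan is to characterize isomorphism of the two canonically embedded curves in terms of the defining equations, using the fact established just before the proposition: two canonical curves are isomorphic if and only if they are related by an element of $\mathrm{Aut}(\mathbb{P}^3_K)\simeq\mathrm{PGL}_4(K)$. Working with a representative $f\in\mathrm{GL}_4(K)$ of the projective automorphism, I would translate the condition $f(\mathcal{C})=\mathcal{C}'$ into a statement about the homogeneous ideals $I(\mathcal{C})=(Q,E)$ and $I(\mathcal{C}')=(Q',E')$, namely that $f$ induces an isomorphism of graded rings carrying $I(\mathcal{C})$ onto $I(\mathcal{C}')$.

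The key geometric input is that for a canonical non-hyperelliptic genus $4$ curve, the quadric through $\mathcal{C}$ is \emph{unique}, as recalled in Section~\ref{sec:geometry}; equivalently, the degree-$2$ part $I(\mathcal{C})_2$ is the one-dimensional space $\langle Q\rangle$. First I would argue the ``only if'' direction: if $f\in\mathrm{GL}_4(K)$ satisfies $f(\mathcal{C})=\mathcal{C}'$, then $f$ maps $I(\mathcal{C})_2$ isomorphically onto $I(\mathcal{C}')_2$, so $f\bigcdot Q$ is a nonzero scalar multiple of $Q'$; after rescaling $f$ I may assume $f\bigcdot Q=Q'$. For the cubic, $f$ maps $I(\mathcal{C})_3$ onto $I(\mathcal{C}')_3$. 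The degree-$3$ part of the ideal is spanned by the cubic $E$ together with the four products $X Q,\,Y Q,\,Z Q,\,T Q$ (the linear multiples of the quadric); hence $f\bigcdot E$ lies in $I(\mathcal{C}')_3=\langle E',\, X Q',\,Y Q',\,Z Q',\,T Q'\rangle$. Writing this membership out gives $f\bigcdot E=\beta E'+\ell Q'$ for some scalar $\beta$ and some linear form $\ell$, where $\beta\neq 0$ because $E$ is irreducible and not a multiple of $Q$, so $f\bigcdot E$ cannot lie entirely in the span of the $\ell'Q'$.

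For the ``if'' direction I would run the argument in reverse: given $f$, $\beta$ and $\ell$ with $f\bigcdot Q=Q'$ and $f\bigcdot E=\beta E'+\ell Q'$, the map $f$ sends both generators of $I(\mathcal{C})$ into $I(\mathcal{C}')$, hence $f(I(\mathcal{C}))\subseteq I(\mathcal{C}')$; applying the same reasoning to $f^{-1}$ (and using $\beta\neq 0$ to recover $E'$) yields the reverse inclusion, so $f$ carries $\mathcal{C}$ onto $\mathcal{C}'$ and the curves are isomorphic.

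The main obstacle will be justifying the precise description of the degree-$3$ part of the ideal, i.e.\ that $I(\mathcal{C})_3$ is exactly the $5$-dimensional space spanned by $E$ and the four multiples $X Q,\dots,T Q$, with no further cubics forced to vanish on $\mathcal{C}$. This requires knowing that $\dim I(\mathcal{C})_3=5$, which follows from computing $h^0(\mathcal{O}_{\mathbb{P}^3}(3))=20$ against $h^0(\mathcal{O}_{\mathcal{C}}(3))=\dim H^0(\mathcal{C},3K_{\mathcal{C}})=15$ by Riemann--Roch on the canonical curve of genus $4$, so that the cubics vanishing on $\mathcal{C}$ form a space of dimension $20-15=5$; combined with the linear independence of $E,\,X Q,\,Y Q,\,Z Q,\,T Q$ this pins down the space exactly and makes the decomposition $f\bigcdot E=\beta E'+\ell Q'$ unambiguous.
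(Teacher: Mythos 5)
Your argument is correct. Note first that the paper states this proposition without giving any proof at all, so there is nothing to compare your write-up against; your route is the natural one and it holds up. The two points that need care are exactly the ones you address: (i) uniqueness of the quadric gives $\dim I(\mathcal{C})_2=1$, so $f\bigcdot Q$ is a nonzero multiple of $Q'$, and since $K$ is algebraically closed you can absorb the scalar by replacing $f$ with $\mu f$ (this rescales $f\bigcdot Q$ by $\mu^{-2}$); and (ii) the identification $I(\mathcal{C})_3=\langle E\rangle+Q\cdot\langle X,Y,Z,T\rangle$, which forces the shape $f\bigcdot E=\beta E'+\ell Q'$ and, via irreducibility of $f\bigcdot E$, forces $\beta\neq 0$ — which is also what makes the converse direction reversible. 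Your Riemann--Roch count $20-15=5$ does establish (ii), provided you also invoke surjectivity of the restriction map $H^0(\mathbb{P}^3,\mathcal{O}(3))\to H^0(\mathcal{C},\mathcal{O}_{\mathcal{C}}(3))$ (projective normality of the canonical curve, or of a complete intersection); a slightly more economical alternative is to observe that for a smooth complete intersection the homogeneous ideal is exactly $(Q,E)$, whose degree-$3$ piece is visibly the $5$-dimensional space you describe. One cosmetic remark: the scalar $\beta$ is not quantified in the paper's statement of the proposition; your proof makes clear it should be read as ``there exists $\beta\in K^\times$.''
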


%
%

The action of the linear form is quite difficult to deal with, so we transform our problem so as to have a simpler action.
In order to do so, we study separately the cases of rank 3 and rank 4 separately. But first, we introduce a few tools from classical invariant theory.

\section{Preliminaries}\label{sec:tools}

In this section, we introduce the tools needed to study algebras of invariants.

\subsection{Hilbert series}

The Hilbert series of a graded algebra carries a great deal of combinatorial information about that algebra.

\begin{definition}
    Let $A$ be a finitely generated graded $K$-algebra, with \[\displaystyle A=\bigoplus_{d\geq 0}A_d\,.\]
    We define the Hilbert series of $A$ as the formal series \[HS(A,\, t) = \sum_{d = 0}^{+\infty}\mathrm{dim}(A_d)t^d\,.\]
\end{definition}

There are several ways of computing the Hilbert series of a graded algebra, one of them uses the theory of weights and roots.
This method is detailed in~\cite[Section 4.6]{kemper}.

\subsection{Linearly reductive groups}

\begin{definition}
    Let $\Gamma$ be a linear algebraic group. We say that $\Gamma$ is linearly reductive if
    for every rational representation $V$ of $\Gamma$, there exist $V_1,\ldots,V_r\subset V$ rational representations of $\Gamma$ such that
    $V = \bigoplus_{i=1}^rV_i$.
\end{definition}

Theorem~\ref{thm:hochster} shows that invariant algebras arising from the action of a linearly reductive group on a rational representation carry a nice structure.
We show in Proposition~\ref{prop:w3macaulay} that $H$ and $G$ are linearly reductive groups. 
We refer the reader to~\cite[Section 2]{kemper} for more information on linearly reductive groups.
\medskip

In the following, we let $A$ be a finitely generated graded $K$-algebra with $A=\bigoplus_{d=0}^{+\infty}A_d$, such that $A_0=K$.

\begin{definition}
    Let $f_1, \ldots, f_r\in A$ be homogeneous elements. We say that $f_1,\ldots, f_r$ is a homogeneous system of parameters if:
    \begin{itemize}
        \item[$-$] $f_1,\ldots, f_r$ are algebraically independent,

        \item[$-$] $A$ is a finitely generated $K[f_1,\ldots,f_r]$-module.
    \end{itemize}
\end{definition}

It is well-known that for any finitely generated graded $K$-algebra $A$, there exists a homogeneous system of parameters~\cite[Corollary 2.4.8]{kemper}.

\begin{definition}
    We say that $A$ is Cohen-Macaulay if there exists a homogeneous system of parameters $f_1,\ldots, f_r\in A$ for $A$,
    such that $A$ is free over $K[f_1,\ldots, f_r]$.
\end{definition}

\begin{prop}[\cite{kemper}]\label{prop:macaulay}
  Let us assume that $A$ is Cohen-Macaulay. We let $f_1, \ldots, f_r\in A$ be
  a homogeneous system of parameters such that we have the decomposition
  \[A = \bigoplus_{i=1}^s g_iK[f_1, \ldots, f_r]\,,\] with $g_i\in A$
  homogeneous of degree $e_i$. Then, the Hilbert series of $A$ is equal to
    \[HS(A, t) = \frac{\sum_{i = 1}^{s}t^{e_i}}{\prod_{j = 1}^r(1-t^{d_j})}\,,\]
    where $d_j = \deg(f_j)$.
\end{prop}

\begin{rem}
If $A$ is Cohen-Macaulay, then for every homogeneous system of parameters $f_1, \ldots, f_r \in A$,
the algebra $A$ is a free $K[f_1, \ldots, f_r]$-module~\cite[Proposition 2.5.3]{kemper}. Hence, the knowledge of the Hilbert series of $A$, together with a homogeneous system of parameters $f_1,\ldots,f_r$ for $A$,
ensures that any generator of $A$ which does not belong to $K[f_1, \ldots, f_r]$ must have degree at most $\max_{1\leq i \leq s}(e_i)$~\cite[Algorithm 2.6.1]{kemper}.
\end{rem}

\begin{theorem}[\cite{hochster}]\label{thm:hochster}
    Let $\Gamma$ be a linearly reductive group, and $V$ be a rational representation of $\Gamma$.
    Then $K[V]^\Gamma$ is Cohen-Macaulay.
\end{theorem}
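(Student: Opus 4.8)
The plan is to recognize this as the Hochster--Roberts theorem and to reduce it to a general statement about pure subrings of regular rings. The first step is to use linear reductivity to produce a \emph{Reynolds operator}: a $K$-linear projection $\mathcal{R} \colon K[V] \twoheadrightarrow K[V]^\Gamma$ that restricts to the identity on $K[V]^\Gamma$ and is a homomorphism of $K[V]^\Gamma$-modules. Its existence is immediate from the definition of linear reductivity: each graded piece $K[V]_d$ is a finite-dimensional rational representation, so it splits as the direct sum of its trivial isotypic component (the invariants) and a canonical $\Gamma$-stable complement; projecting onto the invariant part in each degree yields $\mathcal{R}$, and this projection commutes with multiplication by invariants. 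Consequently the inclusion $K[V]^\Gamma \hookrightarrow K[V]$ \emph{splits} as a map of $K[V]^\Gamma$-modules, so $K[V]^\Gamma$ is a direct summand, and in particular a pure subring, of the polynomial ring $K[V]$.

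This reduces the theorem to the technical core of Hochster--Roberts: a finitely generated pure subring $A$ of a regular ring $S$ is Cohen--Macaulay. Here $S = K[V]$ is a polynomial ring, hence regular, and $A = K[V]^\Gamma$ is finitely generated by Hilbert's finiteness theorem, so the hypotheses are met. To prove this core fact I would use the colon characterization of Cohen--Macaulayness: $A$ is Cohen--Macaulay iff every homogeneous system of parameters $x_1, \ldots, x_d$ is a regular sequence, that is, $(x_1, \ldots, x_{i-1}) :_A x_i = (x_1, \ldots, x_{i-1})$ for each $i$. Given a relation $r x_i \in (x_1, \ldots, x_{i-1})A$, purity lets one transport the question into $S$: it suffices to show that the analogous containment $r \in (x_1, \ldots, x_{i-1})S$ forces $r \in (x_1, \ldots, x_{i-1})A$, so the entire difficulty is pushed into the regular ring $S$.

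The heart of the argument, and the step I expect to be the main obstacle, is this containment inside $S$, which is established only after reduction to positive characteristic. By a spreading-out argument one descends the configuration $A \subseteq S$ to a finitely generated $\mathbb{Z}$-algebra and specializes to closed fibres, obtaining pure inclusions $A_p \subseteq S_p$ with $S_p$ regular of characteristic $p > 0$. In characteristic $p$ the Frobenius endomorphism becomes available, and one invokes the fundamental fact that in a regular ring every ideal is tightly closed; combined with purity, this yields the colon relations $(x_1, \ldots, x_{i-1}) :_{A_p} x_i = (x_1, \ldots, x_{i-1})$ for systems of parameters. Lifting these relations back to characteristic $0$ gives the Cohen--Macaulay property for $A = K[V]^\Gamma$. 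The genuinely hard inputs are the tight-closure (Frobenius) vanishing in $S_p$ and the bookkeeping needed to descend to characteristic $p$ while preserving purity and dimension; once these are granted, everything preceding them is formal once the Reynolds splitting is in hand.
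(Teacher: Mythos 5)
The paper does not prove this statement: it is quoted as the Hochster--Roberts theorem with a bare citation to \cite{hochster}, and the only ``proof'' in the paper is the verification (in Proposition~\ref{prop:w3macaulay}) that the hypotheses hold for $G$ and $H$. So there is no in-paper argument to compare yours against; what you have written is an outline of the proof from the literature.

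As such an outline, your first step is correct and complete: linear reductivity gives the degreewise isotypic splitting, hence a Reynolds operator that is $K[V]^\Gamma$-linear, hence $K[V]^\Gamma$ is a direct summand (in particular a pure subring) of the polynomial ring $K[V]$; together with finite generation this reduces everything to the purity theorem ``a pure subring of a regular ring is Cohen--Macaulay.'' That reduction is the only part of the theorem specific to invariant theory, and you have it right. The remainder, however, is a roadmap rather than a proof: the reduction to characteristic $p$ and the Frobenius/tight-closure input are named but not carried out, and these constitute essentially all of Hochster--Roberts (or of the later Hochster--Huneke tight-closure proof you are following). One step is also slightly misstated. From $r x_i \in (x_1,\ldots,x_{i-1})A$ you cannot hope to deduce $r \in (x_1,\ldots,x_{i-1})S$ directly, since $x_1,\ldots,x_d$ is a system of parameters for $A$ but generally not part of one for $S$, so it is not a regular sequence there. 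The actual architecture is: (i) colon capturing, proved by Frobenius inside $A_p$ itself, gives $(x_1,\ldots,x_{i-1}) :_{A_p} x_i \subseteq \bigl((x_1,\ldots,x_{i-1})A_p\bigr)^*$; (ii) purity plus the fact that every ideal of the regular ring $S_p$ is tightly closed shows every ideal of $A_p$ is tightly closed, collapsing the tight closure back to the ideal. Purity enters in step (ii), not as a device for transporting the colon computation into $S$. With that correction, and granting the descent to characteristic $p$ and the two tight-closure facts as black boxes, your sketch is a faithful account of the standard proof.
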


\subsection{Nullcone of a rational representation}

\begin{definition}
    Let $V$ be a rational representation of an algebraic group $\Gamma$ over $K$. We define the nullcone of $V$ as
    \[\mathcal{N}_V^\Gamma = \{f\in V~|~\forall I\in K[V]_{>0}^\Gamma,~I(f) = 0\}\,.\]
\end{definition}

\begin{lemma}[\protect{\cite[Lemma 2.4.5]{kemper}}]\label{lem:linred}
    Let $V$ be a rational representation of a linearly reductive group $\Gamma$. Let $f_1, \ldots, f_r$ be homogeneous elements of $K[V]^\Gamma$ such that $V(f_1, \ldots, f_r) = \mathcal{N}_V^\Gamma$.
    Then $K[V]^\Gamma$ is finitely generated as a $K[f_1, \ldots, f_r]$-module.
\end{lemma}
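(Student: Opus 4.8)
The plan is to prove module-finiteness by a graded Nakayama argument, with the whole difficulty concentrated in showing that the quotient $R/(f_1,\dots,f_r)R$ is finite-dimensional over $K$. Throughout write $R:=K[V]^\Gamma$, let $A:=K[f_1,\dots,f_r]\subseteq R$, and let $\mathcal H\subseteq K[V]$ denote the Hilbert ideal, i.e. the ideal generated by all homogeneous invariants of positive degree. Since $R$ is a graded algebra with $R_0=K$ and each graded piece $R_d\subseteq K[V]_d$ is finite-dimensional, the graded Nakayama lemma reduces the statement to the claim that $\bar R:=R/(f_1,\dots,f_r)R$ is a finite-dimensional $K$-vector space: a homogeneous $K$-basis of $\bar R$ then lifts to a finite set of $A$-module generators of $R$.

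The first step is to transport the hypothesis into an algebraic identity. By definition $\mathcal N_V^\Gamma=V(\mathcal H)$, so the assumption $V(f_1,\dots,f_r)=\mathcal N_V^\Gamma$ together with Hilbert's Nullstellensatz (valid since $K$ is algebraically closed and $K[V]$ is a polynomial ring) gives $\sqrt{(f_1,\dots,f_r)}=\sqrt{\mathcal H}$ as ideals of $K[V]$. The second step uses linear reductivity through the Reynolds operator $\mathcal R\colon K[V]\to R$, the $\Gamma$-equivariant projection onto the invariants; its defining property is that it is $R$-linear, $\mathcal R(a h)=a\,\mathcal R(h)$ for $a\in R$ and $h\in K[V]$. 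Let $g\in R$ be any homogeneous element of positive degree. Then $g\in\sqrt{\mathcal H}=\sqrt{(f_1,\dots,f_r)}$, so $g^N=\sum_i h_i f_i$ in $K[V]$ for some $N$ and some $h_i\in K[V]$. Applying $\mathcal R$ and using $R$-linearity together with $g^N, f_i\in R$ yields $g^N=\sum_i \mathcal R(h_i)\,f_i\in (f_1,\dots,f_r)R$. Hence every positive-degree element of $R$ is nilpotent in $\bar R$, which is to say $\sqrt{(f_1,\dots,f_r)R}=R_+$, so that $\mathrm{Spec}\,\bar R$ consists of the single irrelevant point.

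It remains to upgrade this zero-dimensionality to genuine finite-dimensionality. Here I would invoke Hilbert's finiteness theorem for linearly reductive groups, which guarantees that $R$ is a finitely generated $K$-algebra; consequently $\bar R$ is a finitely generated graded $K$-algebra of Krull dimension $0$, hence Artinian, hence finite-dimensional over $K$, and graded Nakayama then finishes the proof. I expect this last passage to be the main obstacle, and the place where the set-theoretic hypothesis $V(f_1,\dots,f_r)=\mathcal N_V^\Gamma$ is genuinely needed rather than merely the weaker relation $\sqrt{(f_i)R}=R_+$ inside $R$: the latter holds for many non-Noetherian graded algebras for which the conclusion fails, so the argument must really exploit that the $f_i$ cut out the nullcone inside the ambient polynomial ring $K[V]$, together with the Noetherianity of $K[V]$ and of $R$. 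The other delicate point, already used above, is the correct use of linear reductivity, namely both the existence and the $R$-linearity of the Reynolds operator, equivalently the exactness of the functor of taking $\Gamma$-invariants.
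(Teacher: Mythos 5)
Your proof is correct. Note that the paper itself offers no argument for this lemma: it is quoted verbatim from the literature (Derksen--Kemper, Lemma 2.4.5), so there is no internal proof to compare against. Your argument is the standard one behind that reference: Nullstellensatz in $K[V]$ to get $\sqrt{(f_1,\dots,f_r)}=\sqrt{\mathcal H}$, the Reynolds operator (whose existence and $R$-linearity are exactly what linear reductivity supplies) to push the resulting relation $g^N=\sum h_if_i$ down into $R=K[V]^\Gamma$, Hilbert's finiteness theorem to make $\bar R=R/(f_1,\dots,f_r)R$ a finitely generated $K$-algebra with nilpotent augmentation ideal, hence finite-dimensional, and graded Nakayama to lift a basis of $\bar R$ to module generators. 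All four steps are sound and none is circular (Hilbert's finiteness theorem is proved independently of this lemma). One small remark on your closing worry: once finite generation of $R$ as a $K$-algebra is in hand, the relation $\sqrt{(f_1,\dots,f_r)R}=R_+$ \emph{inside} $R$ is already enough to conclude, since a finitely generated graded $K$-algebra whose positive-degree generators are all nilpotent modulo the ideal is spanned by finitely many monomials in those generators; the set-theoretic hypothesis in $K[V]$ is needed only to establish that relation via the Nullstellensatz and the Reynolds operator, not again afterwards. Your non-Noetherian counterexamples are ruled out precisely by Hilbert's theorem.
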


\begin{cor}\label{cor:hsopnullcone}
    Let $V$ be a rational representation of a linearly reductive group $\Gamma$. Let $f_1, \ldots, f_r$ be homogeneous elements of $K[V]^\Gamma$, where $r = \dim(K[V]^\Gamma)$. We assume that $V(f_1, \ldots, f_r) = \mathcal{N}_V^\Gamma$.
    Then $f_1, \ldots, f_r$ is a homogeneous system of parameters for $K[V]^\Gamma$.
\end{cor}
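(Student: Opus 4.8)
The plan is to combine the two hypotheses, namely that $V(f_1, \ldots, f_r) = \mathcal{N}_V^\Gamma$ and that $r = \dim(K[V]^\Gamma)$, in order to verify directly the two defining properties of a homogeneous system of parameters: algebraic independence of the $f_i$, and finite generation of $K[V]^\Gamma$ as a $K[f_1, \ldots, f_r]$-module. The second property is immediate from Lemma~\ref{lem:linred}, since the nullcone hypothesis is exactly what that lemma requires; so the real content lies in deducing algebraic independence from the dimension count.

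The key step is the following dimension argument. By Lemma~\ref{lem:linred}, $K[V]^\Gamma$ is a finitely generated module over the subalgebra $K[f_1, \ldots, f_r]$. A finitely generated module extension is integral, hence the two rings share the same Krull dimension; that is, $\dim K[f_1, \ldots, f_r] = \dim K[V]^\Gamma = r$. Now the Krull dimension of $K[f_1, \ldots, f_r]$ equals its transcendence degree over $K$, which is at most the number of generators, with equality if and only if the $f_i$ are algebraically independent. Since the dimension is exactly $r$ and there are exactly $r$ generators, the transcendence degree forces $f_1, \ldots, f_r$ to be algebraically independent over $K$. This establishes the first defining property, and together with the module-finiteness already noted, it shows that $f_1, \ldots, f_r$ is a homogeneous system of parameters.

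I would therefore structure the proof in two short movements: first invoke Lemma~\ref{lem:linred} to get the module-finiteness and, as a consequence, the equality of Krull dimensions between $K[f_1, \ldots, f_r]$ and $K[V]^\Gamma$; then translate the hypothesis $r = \dim(K[V]^\Gamma)$ into algebraic independence via the transcendence-degree characterization of Krull dimension for a finitely generated $K$-algebra. The homogeneity of the $f_i$ is inherited from the hypothesis, so nothing extra is needed to conclude that the system is \emph{homogeneous}.

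The main obstacle, if any, is bookkeeping rather than depth: one must be careful that $\dim(K[V]^\Gamma)$ in the statement is the Krull dimension (so that the transcendence-degree comparison is legitimate) and that passing to an integral extension genuinely preserves Krull dimension. These are standard facts from commutative algebra, so I expect the argument to be short once the dimension equality is set up correctly; the only subtlety is ensuring that the number of generators matching the Krull dimension truly rules out any algebraic relation, which is precisely the content of the transcendence-degree bound.
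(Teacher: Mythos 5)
Your proposal is correct and follows essentially the same route as the paper: Lemma~\ref{lem:linred} gives module-finiteness from the nullcone hypothesis, and the equality of Krull dimension with transcendence degree forces the $r$ generators to be algebraically independent. The paper merely compresses the dimension argument that you spell out in detail.
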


\begin{proof}
    We know that the transcendence degree of $K[V]^\Gamma$ is equal to the Krull dimension of $K[V]^\Gamma$ (we refer the reader to~\cite[Theorem 5.9]{kemper2}).
    Since $K[V]^\Gamma$ is a finitely generated $K[f_1, \ldots, f_r]$-module by Lemma~\ref{lem:linred}, the invariants $f_1, \ldots, f_r$ must be algebraically independent (otherwise the dimension of $K[V]^\Gamma$ would be smaller).
    It follows that $f_1, \ldots, f_r$ is a homogeneous system of parameters.
\end{proof}

\begin{definition}
    Let $\Gamma$ be a linearly reductive group, $V$ a rational representation of $\Gamma$. Let $\mathbb{G}_m$
    be the algebraic group $\mathrm{Spec}(K[t,t^{-1}])$.
    We define the $1$-parameter subgroups of $\Gamma$ to be the non-trivial morphisms of algebraic groups (morphisms of varieties which also preserve the group structure)
    $\lambda : \mathbb{G}_m\rightarrow \Gamma$.
\end{definition}

\begin{definition}
    Let $\Gamma$ be a linearly reductive group, $V$ a rational representation of $\Gamma$ of dimension $n$.
    For any $1$-parameter subgroup $\lambda$ of $\Gamma$, we pick a basis of $V$ on which the action of $\lambda$ is diagonal
    (\cite[
    Section 7.3.3]{procesi}). In this basis, we let $\mathrm{diag}(t^{m_1},\ldots,t^{m_n})$ be the matrix of $\lambda$,
    and for any $f = (f_1,\ldots,f_n)\in V$, we define
    \begin{displaymath}
      \mu(f,\lambda) = \min\{m_i~|~i\text{ such that }f_i \neq 0\}\,.
    \end{displaymath}
\end{definition}

\begin{prop}[Hilbert-Mumford criterion, \protect{\cite[Chapter 9]{dolgachev}}] \label{Hilbert-Mumford}
    Let $\Gamma$ be a linearly reductive group, $V$ a rational representation of $\Gamma$.
    Let $f\in V$. Then $f\in \mathcal{N}_V^\Gamma$ if and only if there exists a $1$-parameter subgroup $\lambda$ of $\Gamma$
    such that $\mu(f,\lambda) > 0$.
\end{prop}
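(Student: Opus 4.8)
The plan is to prove the two implications separately, after first recasting the nullcone in geometric terms. Since $\Gamma$ is linearly reductive, the categorical quotient $\pi\colon V\to\mathrm{Spec}(K[V]^\Gamma)$ exists and has the separation property that two points of $V$ lie in the same fibre of $\pi$ if and only if the closures of their $\Gamma$-orbits meet. The nullcone is by definition the common zero locus of all homogeneous invariants of positive degree, that is, the fibre $\pi^{-1}(\pi(0))$; as $\{0\}$ is a $\Gamma$-fixed point, hence a closed orbit, I would first record the equivalence
\[
f\in\mathcal{N}_V^\Gamma\iff 0\in\overline{\Gamma\bigcdot f}\,.
\]
Both implications of the proposition then amount to comparing orbit-closure membership with the existence of a destabilising $1$-parameter subgroup.

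For the easy direction, suppose there is a $1$-parameter subgroup $\lambda$ with $\mu(f,\lambda)>0$. Choosing a basis in which $\lambda(t)=\mathrm{diag}(t^{m_1},\dots,t^{m_n})$ and writing $f=\sum_i f_i e_i$, the hypothesis $\mu(f,\lambda)>0$ says that $f_i\neq 0$ forces $m_i>0$. Hence $\lambda(t)\bigcdot f=\sum_i t^{m_i}f_i e_i$ tends to $0$ as $t\to 0$, so $0\in\overline{\Gamma\bigcdot f}$ and $f\in\mathcal{N}_V^\Gamma$. Equivalently, and without invoking the reformulation above, for any homogeneous $I\in K[V]_{>0}^\Gamma$ the invariance gives $I(f)=I(\lambda(t)\bigcdot f)$ for all $t\neq 0$; letting $t\to 0$ and using that $I$ is a polynomial of positive degree with $I(0)=0$ yields $I(f)=0$, so $f$ lies in the nullcone.

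The hard direction is the substance of the Hilbert--Mumford theorem: assuming $0\in\overline{\Gamma\bigcdot f}$, one must produce an honest $1$-parameter subgroup reaching $0$. The strategy I would follow is a valuative one. Because $0$ lies in the orbit closure, a curve-selection argument furnishes a point $g\in\Gamma\big(K((t))\big)$ of the loop group with $g\bigcdot f\in V\big(K[[t]]\big)$ and reduction $0$ at $t=0$; concretely one selects a smooth curve meeting the orbit on a dense open set and specialising to $0$, then lifts it through the orbit map after a finite base change. I would then invoke the structure theory of $\Gamma\big(K((t))\big)$, namely an Iwahori--Cartan decomposition $g=p\,\lambda(t)\,q$ with $p,q\in\Gamma\big(K[[t]]\big)$ and $\lambda$ a genuine $1$-parameter subgroup of $\Gamma$. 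Replacing $f$ by the $\Gamma$-translate $q(0)\bigcdot f$---which changes neither membership in the nullcone nor, up to conjugating $\lambda$, the existence of a destabilising subgroup---the limit statement for $g\bigcdot f$ transfers to $\lim_{t\to 0}\lambda(t)\bigcdot\big(q(0)\bigcdot f\big)=0$, which is exactly $\mu(q(0)\bigcdot f,\lambda)>0$.

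The main obstacle is precisely this last step: passing from an arbitrary arc $g\in\Gamma\big(K((t))\big)$ approaching $0$ to an algebraic $1$-parameter subgroup. This is where linear reductivity and the Iwahori decomposition of the loop group do the real work, and it is the part that is genuinely special to reductive $\Gamma$ (it fails for general $\Gamma$). For the groups $\Gamma=H$ and $\Gamma=G$ needed in this paper, which are built from copies of $\mathrm{SL}_2(K)$, one could instead specialise the classical elementary-divisor argument for $\mathrm{SL}_n$, but for the general statement citing \cite[Chapter 9]{dolgachev} is the cleanest route.
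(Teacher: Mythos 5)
The paper gives no proof of this proposition at all: it is stated as a quotation of \cite[Chapter 9]{dolgachev} and used as a black box. What you have written is therefore not an alternative to the paper's argument but a reconstruction of the standard proof from the cited source (Mumford's argument via the valuative criterion and the Cartan--Iwahori decomposition of $\Gamma\big(K((t))\big)$). Your reformulation $f\in\mathcal{N}_V^\Gamma\iff 0\in\overline{\Gamma\bigcdot f}$ and your easy direction are complete and correct --- and it is worth noting that the second version of the easy direction ($I(f)=I(\lambda(t)\bigcdot f)\to I(0)=0$) needs no reductivity at all, whereas the reformulation of the nullcone as the fibre over $\pi(0)$ already uses the separation property of the quotient. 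The hard direction is correctly structured, including the final conjugation $\lambda\mapsto q(0)^{-1}\lambda\, q(0)$ needed to destabilise $f$ itself rather than its translate, but it rests on two substantial inputs that you invoke rather than prove: the existence of an arc $g\in\Gamma\big(K((t))\big)$ with $g\bigcdot f$ specialising to $0$, and the decomposition $g=p\,\lambda(t)\,q$ with $p,q\in\Gamma\big(K[[t]]\big)$. Since the paper itself only cites the result, this level of detail is acceptable, but be aware of one point relevant to its use here: the Cartan--Iwahori decomposition is a statement about connected reductive groups, while the paper applies the criterion to the disconnected group $G=H\rtimes\mathbb{Z}/2\mathbb{Z}$. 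This is harmless because every $1$-parameter subgroup of $G$ factors through $H$ (Corollary~\ref{cor:1param}) and $\mathcal{N}_{W_3}^G=\mathcal{N}_{W_3}^H$ (Corollary~\ref{cor:nullconeequal}), but a fully general statement for ``linearly reductive $\Gamma$'' should say explicitly that one reduces to the identity component.
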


Using this criterion, one can effectively compute the nullcone of a rational representation.

\subsection{Transvectants of biforms}

The operators we define in this paragraph allow us to encode the invariants and covariants of biforms in a very compact way,
and they provide an efficient algorithm for evaluating invariants on given biforms.
We will call these operators transvectants (for biforms).
\medskip

It seems they were first introduced by Turnbull (around 1920) and rediscovered by Olver (around 1960)
to construct a generating set of invariants for biforms of bidegree (1,1),
(2,1) and (2,2)~\cite{turnbull, olver}.
To the best of our knowledge, these are the only instances where these operators have been used to construct invariants and covariants of biforms.
In~\cite{turnbull}, Turnbull shows that there is a Gordan algorithm for biforms.
He gives some examples, and in particular discusses the case of bicubic forms:
to run Gordan's algorithm in this case would require a generating set of
invariants of binary forms of degree 12, which is not yet known (although we
have a good idea of what it should be, see \url{https://www.win.tue.nl/~aeb/math/invar.html}).
In addition, the linear diophantine systems have too many solutions, so it seems that this strategy cannot be implemented in its current version for bicubic forms.
\medskip

For a good reference on transvectants, invariants and covariants of binary forms, and in particular Gordan's algorithm,
we refer the reader to~\cite{olive}.

\begin{definition}
    Let $m,n,m',m'>0$, and $k,l\geq 0$. We define a transvectant operator $W_{m,n}\times W_{m',n'}\rightarrow W_{m+m'-2k, n+n'-2l}$: 
    \[(f,g)_{k,l}:=\sum_{\substack{0\leq i\leq k \\ 0\leq j \leq l}}(-1)^{i+j}\binom{k}{i}\binom{l}{j}\frac{\partial^{k+l} f}{\partial x^i\partial y^{k-i}\partial u^j\partial v^{l-j}}\frac{\partial^{k+l} g}{\partial x^{k-i}\partial y^{i}\partial u^{l-j}\partial v^{j}}\,,\]
    where $f,g$ are biforms of bidegrees $(m,n)$ and $(m',n')$ respectively. 
    Moreover, when $k=l$, we write instead $(f,g)_k$.
\end{definition}

\begin{prop}
    Let $f,g$ be biforms.
    For any $k,l \geq 0$, the map $(f,g)\mapsto (f,g)_{k,l}$ is $H$-equivariant.
    Furthermore, if $f\in W_m$, $g\in W_n$, and $k=l$, this map is $G$-equivariant.
\end{prop}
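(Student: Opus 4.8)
The plan is to factor the biform transvectant into two commuting copies of the classical Cayley $\Omega$-process, one acting on each pair of variables, and then to invoke the $\mathrm{SL}_2$-equivariance of the single-variable transvectant. To set this up, I would introduce a second, independent copy of the variables for $g$: keep $f = f(x,y,u,v)$ but write $g = g(\xi,\eta,\sigma,\tau)$, and consider the two second-order operators
\[\Omega_{1} = \frac{\partial^2}{\partial x\,\partial \eta} - \frac{\partial^2}{\partial y\,\partial \xi}\,,\qquad \Omega_{2} = \frac{\partial^2}{\partial u\,\partial \tau} - \frac{\partial^2}{\partial v\,\partial \sigma}\,.\]
A direct expansion of $\Omega_1^{\,k}\Omega_2^{\,l}\big(f(x,y,u,v)\,g(\xi,\eta,\sigma,\tau)\big)$ followed by the diagonal restriction $\xi\leftarrow x$, $\eta\leftarrow y$, $\sigma\leftarrow u$, $\tau\leftarrow v$ reproduces $(f,g)_{k,l}$ up to the combinatorial normalization already present in the definition. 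This identity is the only genuinely computational step, and it is the one I would verify first.

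For $H$-equivariance, the key observation is that $\Omega_1$ only involves $(x,y)$ and $(\xi,\eta)$, on which the first $\mathrm{SL}_2$ factor acts diagonally, and that $\Omega_1$ is invariant under this diagonal action precisely because an element $M_1\in\mathrm{SL}_2(K)$ satisfies $\det M_1 = 1$; this is exactly the classical fact underlying the $\mathrm{SL}_2$-equivariance of the binary transvectant, for which I would appeal to the single-variable theory. Symmetrically, $\Omega_2$ is invariant under the diagonal action of the second factor on $(u,v)$ and $(\sigma,\tau)$. Since $\Omega_1$ and $\Omega_2$ act on disjoint sets of variables they commute, so $\Omega_1^{\,k}\Omega_2^{\,l}$ is invariant under the full diagonal action of $H$ on the doubled variables. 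The diagonal restriction identifies each variable with its primed counterpart and is equivariant because $H$ acts identically on both; composing, the map $(f,g)\mapsto(f,g)_{k,l}$ is $H$-equivariant.

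For $G$-equivariance it remains to handle the generator of $\mathbb{Z}/2\mathbb{Z}$, which swaps $(x,y)$ with $(u,v)$; recall that on the coordinate ring this is the exchange of the coefficients $a_{ij}\leftrightarrow a_{ji}$. Tracking this involution $\sigma$ through the defining sum exchanges the two derivative blocks and the two binomial factors, and a reindexing of the summation shows that in general
\[(\sigma\bigcdot f,\,\sigma\bigcdot g)_{k,l} = \sigma\bigcdot (f,g)_{l,k}\,.\]
When $f,g\in W_m$ and $k=l$ the orders of the two blocks coincide, the right-hand side becomes $\sigma\bigcdot(f,g)_{k}$, and the map commutes with $\sigma$. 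Combined with the $H$-equivariance established above, this yields $G$-equivariance.

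The hard part is not conceptual but bookkeeping: one must pin down the correct normalization constant in the $\Omega$-process identity, check that the diagonal restriction genuinely commutes with the group action on each variable pair, and — for the $G$-statement — confirm that the coefficient-swap action on $K[W_m]$ matches the variable swap under the transvectant. It is precisely in this last verification that the hypothesis $k=l$ becomes indispensable.
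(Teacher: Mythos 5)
Your proposal is correct and follows essentially the same route as the paper: the paper also identifies $(f,g)_{k,l}$ as the trace of two commuting $\Omega$-processes (one on $(x,y)$, one on $(u,v)$) to get $H$-equivariance, and handles the $\mathbb{Z}/2\mathbb{Z}$ generator by observing that the swap commutes with the transvectant when $k=l$. Your write-up merely makes explicit the doubled-variable bookkeeping and the identity $(\sigma\bigcdot f,\sigma\bigcdot g)_{k,l}=\sigma\bigcdot(f,g)_{l,k}$ that the paper leaves implicit.
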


\begin{proof}
    We notice that $(f,g)_{k,l}$ is the trace of the composition of two $\Omega$-processes~\cite[Equation 10.28]{olver},
    one on the set of variables $x,y$ and the second one on $u,v$.
    It follows that $(f,g)\mapsto (f,g)_{k,l}$ is $H$-equivariant.

    For the second statement, suppose that $f\in W_m$, $g\in W_n$ and $k=l$.
    The action of $\mathbb{Z}/2\mathbb{Z}$ on $f$ and $g$ switches $(x,y)$ and $(u,v)$,
    but it can also be seen as the exchange of the coefficients $a_{i,j}$ and $a_{j,i}$ for all $i,j$, in both $f$ and $g$.
    This action on the coefficients clearly commutes with the transvectant, hence $(f,g)\mapsto (f,g)_k$ is also $\mathbb{Z}/2\mathbb{Z}$-equivariant.
\end{proof}

\section{Case of rank $4$}\label{sec:rank4}

Let $Q\in K[X,Y,Z,T]$ be a homogeneous irreducible quadratic form of rank 4.
Up to a change of variables, we can assume that $Q = XT-YZ$. In the following, $Q$ always denotes $XT-YZ$.
Let $\mathcal{C}$ be the canonical embedding in $\mathbb{P}^3_K$ of a non-hyperelliptic curve of genus 4 which lies on the quadric defined by $Q$.
Let $E\in K[X,Y,Z,T]$ be a homogeneous irreducible cubic form such that $\mathcal{C}= \{Q=0\}\cap\{E=0\}$.
\medskip

We consider the Segre embedding

\[\fonction{\varphi}{\mathbb{P}_K^1\times\mathbb{P}_K^1}{\mathbb{P}_K^3}{([x:y],[u:v])}{[xu:xv:yu:yv]}\,.\]

It is well-known that $\varphi$ is an isomorphism from $\mathbb{P}_K^1\times\mathbb{P}_K^1$ to the quadric defined by $Q$ in $\mathbb{P}^3_K$.
Hence we can see the curve $\mathcal{C}$ as the pullback of $\{E=0\}$ by $\varphi$ in $\mathbb{P}_K^1\times\mathbb{P}_K^1$, which is defined by a bicubic form in $x,y$ and $u,v$.
For the rest of the paper, we write a generic bicubic form as:
\[f = a_{33}x^3u^3+a_{32}x^3u^2v+a_{31}x^3uv^2+a_{30}x^3v^3+a_{23}x^2yu^3+a_{22}x^2yu^2v+\ldots+a_{00}y^3v^3\,.\]

\subsection{Reduction to an algebraic problem}

We might ask what conditions must be met on two biforms for the corresponding curves to be isomorphic.

\begin{prop}
Let $E, E'\in K[X,Y,Z,T]$ be homogeneous irreducible cubic forms. Let $\mathcal{C} = \{E = 0\}\cap \{Q = 0\}$ and $\mathcal{C}' = \{E' = 0\}\cap \{Q = 0\}$. 
Let $f,f'\in W_3$ be the pullbacks of $E$ and $E'$ by $\varphi$. 
Then the curves $\mathcal{C}$ and $\mathcal{C}'$ are isomorphic if and only if
there exists $g\in \mathrm{Aut}(\mathbb{P}^1_K\times \mathbb{P}^1_K)$ such that $g\bigcdot f = f'$, where we identify $\mathrm{Aut}(\mathbb{P}^1_K\times \mathbb{P}^1_K)$
with the pullback of the subgroup of $\mathrm{GL}_4(K)$ which stabilizes $Q$.
\end{prop}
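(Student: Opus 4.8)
The plan is to establish the equivalence by carefully tracking how the two descriptions of the curves—as subschemes of $\mathbb{P}^3_K$ on one hand, and as subschemes of $\mathbb{P}^1_K\times\mathbb{P}^1_K$ via the Segre embedding $\varphi$ on the other—are related through the group actions. The key observation is that since $Q = Q'$ here (both curves lie on the \emph{same} quadric), the criterion from Proposition~\ref{prop:isomorphismP3} simplifies considerably. First I would invoke that proposition: the curves $\mathcal{C}$ and $\mathcal{C}'$ are isomorphic if and only if there exist $f_0\in\mathrm{GL}_4(K)$, a scalar $\beta$, and a linear form $\ell$ such that $f_0\bigcdot Q = Q$ and $f_0\bigcdot E = \beta E' + \ell Q$. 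The condition $f_0\bigcdot Q = Q$ says precisely that $f_0$ lies in the stabilizer of $Q$ inside $\mathrm{GL}_4(K)$.

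Next I would identify this stabilizer with $\mathrm{Aut}(\mathbb{P}^1_K\times\mathbb{P}^1_K)$ via $\varphi$. Since $\varphi$ is an isomorphism onto the quadric $\{Q=0\}$, any automorphism of $\mathbb{P}^3_K$ preserving that quadric restricts to an automorphism of it, and hence pulls back through $\varphi$ to an automorphism of $\mathbb{P}^1_K\times\mathbb{P}^1_K$; conversely every automorphism of $\mathbb{P}^1_K\times\mathbb{P}^1_K$ extends (via the stabilizer of $Q$) to such an $f_0$. This is the identification the statement explicitly asks us to make. Under $\varphi$, the pullback of $\{E=0\}$ is by definition the bicubic $f$, and similarly $\{E'=0\}$ pulls back to $f'$, so the geometric content is that the stabilizer of $Q$ acts on the space of cubics modulo the ambient quadric, and this action transports to the action of $\mathrm{Aut}(\mathbb{P}^1_K\times\mathbb{P}^1_K)$ on bicubic forms.

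The main technical point—and the step I expect to be the chief obstacle—is to show that the presence of the term $\ell Q$ in the $\mathbb{P}^3$ criterion does \emph{not} introduce any extra freedom at the level of bicubics, i.e.\ that $f_0\bigcdot E = \beta E' + \ell Q$ pulls back under $\varphi$ to exactly $g\bigcdot f = \beta f'$ (with $\beta$ absorbable since we work projectively). The reason is that $Q$ pulls back to $0$ under $\varphi$: a point of $\mathbb{P}^1_K\times\mathbb{P}^1_K$ maps into the quadric $\{Q=0\}$, so $\varphi^*(\ell Q) = \varphi^*(\ell)\cdot\varphi^*(Q) = 0$ regardless of $\ell$. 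Thus the term $\ell Q$ becomes invisible after restricting to the quadric, which is exactly why passing to bicubics eliminates the troublesome linear-form action mentioned earlier in the paper. I would verify this compatibility by chasing the commuting square relating the $\mathrm{GL}_4$-action on cubics and the $\mathrm{Aut}(\mathbb{P}^1\times\mathbb{P}^1)$-action on bicubics through $\varphi^*$, taking care that the scalar $\beta$ and the freedom in $\ell$ are correctly accounted for on both sides.

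Finally I would assemble the two implications. For the forward direction, given an isomorphism of curves, Proposition~\ref{prop:isomorphismP3} produces $f_0$ stabilizing $Q$, which pulls back to the desired $g$ with $g\bigcdot f = f'$ after pulling back the relation $f_0\bigcdot E = \beta E' + \ell Q$ through $\varphi^*$ and discarding the $\ell Q$ term. For the converse, given $g$ with $g\bigcdot f = f'$, I would lift $g$ to an element $f_0$ of the stabilizer of $Q$ in $\mathrm{GL}_4(K)$ and check that $f_0\bigcdot E$ and $E'$ agree on the quadric $\{Q=0\}$, so that their difference is divisible by $Q$, yielding the required linear form $\ell$ and scalar $\beta$; invoking Proposition~\ref{prop:isomorphismP3} then gives the isomorphism of curves.
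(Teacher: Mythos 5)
Your proposal is correct and follows essentially the same route as the paper: invoke Proposition~\ref{prop:isomorphismP3}, identify the stabilizer of $Q$ in $\mathrm{GL}_4(K)$ with $\mathrm{Aut}(\mathbb{P}^1_K\times\mathbb{P}^1_K)$ via $\varphi$, and observe that the $\ell Q$ term dies under pullback to the quadric (equivalently, that all cubics $E'+\ell Q$ pull back to the same bicubic $f'$). Your extra remark for the converse---that a cubic vanishing on the irreducible quadric is divisible by $Q$---is exactly the justification the paper leaves implicit.
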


\begin{proof}
    We use the characterization of Proposition~\ref{prop:isomorphismP3}.
    Let $E, E'\in K[X,Y,Z,T]$ be homogeneous irreducible cubic forms. Let $\mathcal{C} = \{E = 0\}\cap \{Q = 0\}$ and $\mathcal{C}' = \{E' = 0\}\cap \{Q = 0\}$. 
    Let $f,f'\in W_3$ be the pullbacks of $E$ and $E'$ by $\varphi$. 
    Let $U$ be the subgroup of $\mathrm{GL}_4(K)$ which preserves $Q$. We can identify its pullback by $\varphi$ with $\mathrm{Aut}(\mathbb{P}^1_K\times \mathbb{P}^1_K)$.
    Then the curves $\mathcal{C}$ and $\mathcal{C}'$ are isomorphic if and only if there exists an element $M\in U$ such that
    $M\bigcdot E = E'+\ell Q$.
    We note that for any linear form $\ell\in K[X,Y,Z,T]$, the cubic forms $E'+\ell Q$ are pulled back to $f'$.
    Hence $\mathcal{C}$ and $\mathcal{C}'$ are isomorphic if and only if there exists $g\in \mathrm{Aut}(\mathbb{P}^1_K\times \mathbb{P}^1_K)$ such that $g\bigcdot f = f'$.
    We thus obtain the desired result.
\end{proof}

Consequently, the problem of finding the isomorphism class of a non-hyperelliptic curve of genus 4 of rank 4 reduces to the study of the orbits of bicubic forms under the action of $\mathrm{Aut}(\mathbb{P}^1_K\times \mathbb{P}^1_K)$.

\begin{lemma}
We have $\mathrm{Aut}(\mathbb{P}_K^1\times \mathbb{P}_K^1)\simeq \mathrm{PGL}_2(K)\times \mathrm{PGL}_2(K)\rtimes \mathbb{Z}/2\mathbb{Z}$,
where the first and second $\mathrm{PGL}_2(K)$ act on the first and second $\mathbb{P}^1_K$ respectively, and $\mathbb{Z}/2\mathbb{Z}$ acts by exchanging the two $\mathbb{P}^1_K$.
\end{lemma}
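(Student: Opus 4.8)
# Proof Proposal

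The goal is to identify $\mathrm{Aut}(\mathbb{P}^1_K\times\mathbb{P}^1_K)$ with the semidirect product $(\mathrm{PGL}_2(K)\times\mathrm{PGL}_2(K))\rtimes\mathbb{Z}/2\mathbb{Z}$. My plan is to proceed via the Picard group, which governs the automorphisms through the induced action on line bundles. First I would recall that $\mathrm{Pic}(\mathbb{P}^1_K\times\mathbb{P}^1_K)\cong\mathbb{Z}\oplus\mathbb{Z}$, generated by the pullbacks $\mathcal{O}(1,0)$ and $\mathcal{O}(0,1)$ of the hyperplane classes from the two factors. Any automorphism $g$ induces a group automorphism $g^*$ of $\mathrm{Pic}$, and since $g^*$ must preserve the ample cone and send effective classes to effective classes, it can only either fix the two generators or swap them. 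This dichotomy is precisely what produces the $\mathbb{Z}/2\mathbb{Z}$ factor.

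The heart of the argument is to treat the two cases. In the case where $g^*$ fixes each generator, I would argue that $g$ preserves each of the two rulings of the quadric, i.e.\ the projections $p_1,p_2:\mathbb{P}^1_K\times\mathbb{P}^1_K\to\mathbb{P}^1_K$. A fiber of $p_1$ is a curve of class $(0,1)$, and its image under $g$ is again of class $(0,1)$, hence again a fiber of $p_1$; this gives a well-defined induced automorphism on the base $\mathbb{P}^1_K$ of $p_1$, and symmetrically for $p_2$. Thus $g$ descends to a pair $(\sigma_1,\sigma_2)\in\mathrm{Aut}(\mathbb{P}^1_K)\times\mathrm{Aut}(\mathbb{P}^1_K)$, and conversely any such pair $(\sigma_1,\sigma_2)$ acts diagonally as an automorphism. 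Using $\mathrm{Aut}(\mathbb{P}^1_K)\cong\mathrm{PGL}_2(K)$, one checks this identification is a group isomorphism onto the index-two subgroup of automorphisms preserving the rulings. In the case where $g^*$ swaps the generators, I would compose $g$ with the involution $\tau:(P,Q)\mapsto(Q,P)$, which exchanges the two rulings; then $\tau\circ g$ preserves each ruling and so lands in the previous subgroup. This shows the full automorphism group is generated by the index-two subgroup together with $\tau$, and since $\tau$ conjugates $(\sigma_1,\sigma_2)$ to $(\sigma_2,\sigma_1)$, the extension is exactly the stated semidirect product.

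To finish I would verify the compatibility with the pullback description in the statement, namely that this intrinsic automorphism group matches the pullback by $\varphi$ of the stabilizer of $Q$ in $\mathrm{GL}_4(K)$. Concretely, an element $(M_1,M_2)\in\mathrm{PGL}_2(K)\times\mathrm{PGL}_2(K)$ acts on coordinates by $(x,y)\mapsto M_1(x,y)$ and $(u,v)\mapsto M_2(u,v)$, and under the Segre map $[xu:xv:yu:yv]$ this is induced by an element of $\mathrm{GL}_4(K)$ preserving $Q=XT-YZ$ (this is the classical fact that $\mathrm{SO}_4$ up to center is $\mathrm{PGL}_2\times\mathrm{PGL}_2$); the swap $\tau$ corresponds to the transposition $Y\leftrightarrow Z$, which also fixes $Q$.

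The main obstacle I anticipate is making the Picard-theoretic dichotomy fully rigorous, in particular justifying that an automorphism of $\mathbb{P}^1_K\times\mathbb{P}^1_K$ preserving the ruling classes really does descend to a genuine automorphism of each $\mathbb{P}^1_K$ factor (rather than merely permuting fibers set-theoretically). This requires knowing that the quotient of $\mathbb{P}^1_K\times\mathbb{P}^1_K$ by either projection is $\mathbb{P}^1_K$ and that $g$ respects this quotient structure functorially; the cleanest route is to observe that $p_1$ is recovered intrinsically as the morphism associated to the linear system $|\mathcal{O}(1,0)|$, so its compatibility with $g$ follows automatically from $g^*\mathcal{O}(1,0)\cong\mathcal{O}(1,0)$.
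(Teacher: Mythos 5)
Your proposal is correct and follows essentially the same route as the paper: both arguments use $\Pic(\mathbb{P}^1_K\times\mathbb{P}^1_K)\cong\mathbb{Z}\times\mathbb{Z}$ and the preservation of effective classes to show the induced map on $\Pic$ either fixes or swaps the two ruling classes, then reduce the swap case to the trivial case by composing with the factor-exchange involution. Your added justification of the descent step via the linear system $\lvert\mathcal{O}(1,0)\rvert$ fills in a point the paper states without detail, but it is an elaboration of the same argument rather than a different one.
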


\begin{proof}
    It is clear that $\mathrm{PGL}_2(K)\times \mathrm{PGL}_2(K)\rtimes \mathbb{Z}/2\mathbb{Z}$ is a subgroup of $\mathrm{Aut}(\mathbb{P}_K^1\times \mathbb{P}_K^1)$.
    We now prove the converse inclusin, with arguments involving divisors.
    \medskip

    Let $\psi$ be an automorphism of $\mathbb{P}_K^1\times \mathbb{P}_K^1$. It induces an automorphism $\tilde{\psi}$ of $\Pic(\mathbb{P}_K^1\times \mathbb{P}_K^1)$.

    Let $\pi_1, \pi_2$ be the projections $\mathbb{P}_K^1\times \mathbb{P}_K^1 \rightarrow \mathbb{P}_K^1$ on the first and second $\mathbb{P}_K^1$ factor respectively.
    Let $\alpha = \mathbb{P}_K^1\times \{[0:1]\}$ and $\beta = \{[0:1]\} \times \mathbb{P}_K^1$ be the pullbacks of the point at infinity by $\pi_2$ and $\pi_1$ respectively.
    It is clear that $\alpha$ and $\beta$ are effective divisors of $\Pic(\mathbb{P}_K^1\times \mathbb{P}_K^1)$.
    \medskip

    It is well-known that there is an isomorphism $\Pic(\mathbb{P}_K^1\times
    \mathbb{P}_K^1) \simeq \mathbb{Z}\times \mathbb{Z}$~\cite[Exercise A.2.1]{hindry}.
    The divisors $\alpha$ and $\beta$ are generators of $\Pic(\mathbb{P}_K^1\times \mathbb{P}_K^1)$, so we identify them as
    generators of $\mathbb{Z}\times \mathbb{Z}$.

    The group morphism $\tilde{\psi}$ acts linearly on $\alpha$ and $\beta$, with integer coefficients.
    Moreover, if $D$ is an effective divisor, $\tilde{\psi}(D)$ and $\tilde{\psi}^{-1}(D)$ are also effective.
    Only two $\tilde{\psi}$ satisfy such conditions: the identity, and the automorphism that switches $\alpha$ and $\beta$.
    \medskip

    Let us assume that $\tilde{\psi}$ acts trivially on $\Pic(\mathbb{P}_K^1\times \mathbb{P}_K^1)$.
    Then $\psi$ preserves the hyperplane sections of each factor. It follows that $\psi\in \mathrm{PGL}_2(K)\times \mathrm{PGL}_2(K)$.
    \medskip

    If $\psi$ does not act trivially on $\Pic(\mathbb{P}_K^1\times \mathbb{P}_K^1)$, we let $\mu\in\mathrm{Aut}(\mathbb{P}_K^1\times \mathbb{P}_K^1)$ such that $\mu$ exchanges the two factors $\mathbb{P}^1$.
    The automorphism $\tilde{\mu}$ it induces on $\Pic(\mathbb{P}_K^1\times \mathbb{P}_K^1)$ is the non-trivial involution.
    The action of $\widetilde{\mu \circ \psi}$ on $\Pic(\mathbb{P}_K^1\times \mathbb{P}_K^1)$ is trivial, so we conclude that $\mu \circ \psi\in \mathrm{PGL}_2(K)\times \mathrm{PGL}_2(K)$.

    This proves the converse inclusion.
\end{proof}

It is easier to study the action of $\mathrm{SL}_2(K)\times \mathrm{SL}_2(K)\rtimes \mathbb{Z}/2\mathbb{Z}$,
thus we wish to reduce to that case.

\begin{lemma}
    Let $\{I_j\}$ be a homogeneous generating set of invariants for $K[W_3]^G$, with $I_j$ of degree $d_j$.
    Let $\mathcal{C}$, $\mathcal{C}'$ be two non-hyperelliptic curves of genus 4 defined by $f, f'\in W_3$ respectively.
    Then $\mathcal{C}$, $\mathcal{C}'$ are isomorphic if and only if there exists $\lambda\in K^\times$ such that
    \begin{align}
        I_j(f') = \lambda^{d_j}I_j(f)\,.
    \end{align}
\end{lemma}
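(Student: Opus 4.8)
The plan is to pass from the geometric isomorphism---already reformulated through the action of $\mathrm{Aut}(\mathbb{P}^1_K\times\mathbb{P}^1_K)$ on bicubics---to the linear action of $G$ on $W_3$, and then to read off orbit equality from the values of a generating set of invariants.

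First I would pin down the relation between the two groups. The quotient map $\mathrm{SL}_2(K)\to\mathrm{PGL}_2(K)$ is surjective on $K$-points, since over an algebraically closed field any invertible matrix may be rescaled to determinant $1$, and it commutes with the exchange of the two factors; it therefore induces a surjection $\pi\colon G\twoheadrightarrow\mathrm{Aut}(\mathbb{P}^1_K\times\mathbb{P}^1_K)$ with kernel $\{(\pm\Id,\pm\Id)\}$. A short computation with the defining action shows that each of these elements multiplies a bicubic by $\pm1$; hence the kernel acts trivially on $\mathbb{P}(W_3)$, the $G$-action on $\mathbb{P}(W_3)$ descends along $\pi$, and it coincides with the natural action of $\mathrm{Aut}(\mathbb{P}^1_K\times\mathbb{P}^1_K)$. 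Feeding this into the orbit reformulation of the previous proposition, $\mathcal{C}$ and $\mathcal{C}'$ are isomorphic precisely when $f$ and $f'$ define the same point of $\mathbb{P}(W_3)$ up to the $G$-action, i.e.\ when there exist $M\in G$ and $c\in K^\times$ with $M\bigcdot f = c\,f'$.

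With this reformulation in hand the forward implication is immediate: if $M\bigcdot f=c\,f'$, then using the $G$-invariance of $I_j$ followed by its homogeneity of degree $d_j$,
\[I_j(f)=I_j(M\bigcdot f)=I_j(c\,f')=c^{d_j}I_j(f')\,,\]
so the relations hold with $\lambda=c^{-1}$.

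For the converse I would set $h=\lambda^{-1}f'$; the hypothesis gives $I_j(h)=\lambda^{-d_j}I_j(f')=I_j(f)$ for every generator, hence $I(h)=I(f)$ for all $I\in K[W_3]^G$, because the $I_j$ generate this algebra. It then remains to promote this equality of invariants to an equality of $G$-orbits, after which $M\bigcdot f=h=\lambda^{-1}f'$ for some $M\in G$, and taking $c=\lambda^{-1}$ closes the loop. This last promotion is the main obstacle. It rests on the linear reductivity of $G$---so that invariants separate closed orbits---together with the fact that the forms attached to smooth curves are \emph{stable} points, whose orbits are consequently closed; I expect to secure this closedness through the Hilbert--Mumford criterion and the explicit description of the nullcone set up in Section~\ref{sec:tools}.
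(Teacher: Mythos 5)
Your argument follows essentially the same route as the paper's: the forward direction rescales a $\mathrm{GL}_2(K)\times\mathrm{GL}_2(K)$ element to land in $G$ at the cost of a scalar $\lambda$ and then uses invariance plus homogeneity, while the converse invokes the fact that a generating set of $K[W_3]^G$ separates orbits outside the nullcone together with the observation that forms defining smooth curves are not in the nullcone. The one step you leave as a promissory note --- upgrading ``not in the nullcone'' (semistability) to closedness of the orbit so that invariants genuinely separate, which the Hilbert--Mumford criterion for the nullcone alone does not give --- is precisely the point the paper itself passes over with the bare assertion that generating invariants ``separate the orbits which are not unstable,'' so your proposal is complete to the same standard as the published proof.
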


\begin{proof}
    Let us assume that $\mathcal{C}$ and $\mathcal{C}'$ are isomorphic. There exists an element 
    \[M := (M_1, M_2, \varepsilon)\in \mathrm{GL}_2(K)\times\mathrm{GL}_2(K)\rtimes \mathbb{Z}/2\mathbb{Z}\] such that
    $M\bigcdot f = f'$. We define \[M'=\left(\frac{1}{\det(M_1)}M_1, \frac{1}{\det(M_2)}M_2, \varepsilon\right)\,,\text{ and } \lambda = (\det(M_1)\det(M_2))^3\,.\] 
    Clearly we have $M'\bigcdot f = \lambda f'$. Hence, for all $j$ we have 
    \[I_j(f) = I_j(M'\bigcdot f) = I_j(\lambda f') = \lambda^{d_j}I_j(f')\,.\]
    \medskip

    The converse implication relies on the fact that $\{I_j\}$ is a generating set of $K[W_3]^G$, hence it separates the orbits which are not unstable.
    We note that curves defined by elements of the nullcone $\mathcal{N}_{W_3}^G$ are singular.
    Hence, since by assumption the curves $\mathcal{C}$ and $\mathcal{C}'$ are not singular, the biforms $f$ and $f'$ do not belong to the nullcone $\mathcal{N}_{W_3}^G$.
    It follows that if their invariants are the same, they are $G$-equivalent.
\end{proof}

To exhibit such a set of generators for the algebra $K[W_3]^G$, we divide the study into two steps: first we study $K[W_3]^H$,
then we look at the action of $\mathbb{Z}/2\mathbb{Z}$ over $K[W_3]^H$. The following lemma
guarantees that this approach is valid.

\begin{lemma}
    We have \[K[W_3]^{G} = \left(K[W_3]^{H}\right)^{\mathbb{Z}/2\mathbb{Z}}\,.\]
\end{lemma}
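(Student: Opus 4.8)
The plan is to prove the equality $K[W_3]^{G} = \left(K[W_3]^{H}\right)^{\mathbb{Z}/2\mathbb{Z}}$ by establishing both inclusions directly from the definitions of the group actions. Recall that $G = H \rtimes \mathbb{Z}/2\mathbb{Z}$, so every element of $G$ can be written as a product of an element of $H$ and (possibly) the generator $\sigma$ of $\mathbb{Z}/2\mathbb{Z}$, which acts on $W_3$ by switching $(x,y)$ and $(u,v)$. Since $G$-invariance means invariance under all of $G$, and $G$ is generated by $H$ together with $\sigma$, a function is $G$-invariant precisely when it is simultaneously $H$-invariant and $\sigma$-invariant.

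First I would prove the inclusion $K[W_3]^{G} \subseteq \left(K[W_3]^{H}\right)^{\mathbb{Z}/2\mathbb{Z}}$. Let $I \in K[W_3]^{G}$. Since $H \subseteq G$, the function $I$ is in particular invariant under the $H$-action, so $I \in K[W_3]^{H}$. Moreover, since the generator $\sigma$ of $\mathbb{Z}/2\mathbb{Z}$ lies in $G$, we have $\sigma \bigcdot I = I$, so $I$ is fixed by the $\mathbb{Z}/2\mathbb{Z}$-action. Therefore $I \in \left(K[W_3]^{H}\right)^{\mathbb{Z}/2\mathbb{Z}}$.

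For the reverse inclusion $\left(K[W_3]^{H}\right)^{\mathbb{Z}/2\mathbb{Z}} \subseteq K[W_3]^{G}$, I would take $I \in \left(K[W_3]^{H}\right)^{\mathbb{Z}/2\mathbb{Z}}$, so $I$ is $H$-invariant and $\sigma \bigcdot I = I$. The key point is that an arbitrary element $g \in G$ has the form $g = h$ or $g = h\sigma$ with $h \in H$. In the first case $g \bigcdot I = h \bigcdot I = I$ by $H$-invariance. In the second case $g \bigcdot I = h \bigcdot (\sigma \bigcdot I) = h \bigcdot I = I$, again using $H$-invariance after applying $\sigma$-invariance. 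Hence $I$ is fixed by every element of $G$, so $I \in K[W_3]^{G}$. One subtlety worth stating carefully is that the $\mathbb{Z}/2\mathbb{Z}$-action preserves the subalgebra $K[W_3]^{H}$: this follows because $\sigma$ normalizes $H$ inside $G$ (indeed $\sigma$ acts on $H$ by exchanging the two $\mathrm{SL}_2(K)$ factors), so conjugation by $\sigma$ sends $H$-invariants to $H$-invariants, which is what makes the notation $\left(K[W_3]^{H}\right)^{\mathbb{Z}/2\mathbb{Z}}$ well-defined in the first place.

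The argument is essentially formal and I do not expect any serious obstacle; the only thing requiring genuine care is verifying that the $\mathbb{Z}/2\mathbb{Z}$-action descends to a well-defined action on $K[W_3]^{H}$, which relies on the semidirect-product structure $G = H \rtimes \mathbb{Z}/2\mathbb{Z}$ and the fact that $\sigma$ normalizes $H$. This is exactly the standard fact that for a semidirect product $G = N \rtimes Q$ acting on an algebra $R$, one has $R^{G} = (R^{N})^{Q}$, and the proof is just the two-line verification above once the generation of $G$ by $H$ and $\sigma$ is made explicit.
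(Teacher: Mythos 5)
Your proof is correct and follows essentially the same route as the paper, which simply invokes the general fact that for a semidirect product $\Gamma = M \rtimes N$ acting on a rational representation $V$ one has $K[V]^\Gamma = \left(K[V]^M\right)^N$, calling it a straightforward consequence of the definitions. You have merely written out that two-line verification explicitly, including the (correct) observation that $\sigma$ normalizing $H$ is what makes the induced $\mathbb{Z}/2\mathbb{Z}$-action on $K[W_3]^H$ well defined.
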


\begin{proof}
    This is an application of a more general result: let $V$ be a rational representation of an algebraic group $\Gamma=M\rtimes N$.
    Then $K[V]^\Gamma=\left(K[V]^M\right)^N$. This result comes from the straightforward use of the definitions of the objects involved.
\end{proof}

This strategy is motivated by the fact that $H$ is connected, whereas $G$ is not.
Thus, using the theory of weights, we can compute the Hilbert series of $K[W_3]^{H}$,
but not the Hilbert series of $K[W_3]^{G}$. However, the Hilbert series of $K[W_3]^{G}$ is computed by different means,
and is given in Equation~\eqref{eq:hilb2}.
\medskip

We exhibit a generating set of invariants for $K[W_3]^H$ in 4 steps:
\begin{enumerate}
    \item Computation of the Hilbert series $HS(K[W_3]^H, t)$,

    \item Computation of the nullcone $\mathcal{N}_{W_3}^H$,

    \item Search of a homogeneous system of parameters for $K[W_3]^H$,

    \item Use of the Hilbert series to find generators of $K[W_3]^H$.
\end{enumerate}

In fact, we note in Remark~\ref{rem:samehsop} that any homogeneous system of parameters for $K[W_3]^G$
is a homogeneous system of parameters for $K[W_3]^H$, which modifies steps 2 and 3. We instead compute $\mathcal{N}_{W_3}^G$,
in order to find a homogeneous system of parameters for $K[W_3]^G$.

\subsection{Hilbert series $HS(K[W_3]^H, t)$}

The method we used to compute the Hilbert series of $K[W_3]^H$ is detailed in~\cite[Section 4.6]{kemper}.
It relies on the theory of weights and the computation of residues of meromorphic functions.
\medskip

Clearly $K[W_3]^H$ is a graded algebra: we have $K[W_3]=K[a_{i,j}]_{0\leq i,j\leq 3}$, with weights of the variables equal to $1$.
We compute the Hilbert series of $K[W_3]^H$ relatively to this grading.
\medskip

We verify the hypotheses of~\cite[Theorem 4.6.3]{kemper}:
\begin{enumerate}
    \item $\mathrm{SL}_2(K)$ is connected, since it is the affine scheme of an integral ring.
    Hence $H$ is connected.

    \item $K$ is algebraically closed of characteristic 0.

    \item $W_3$ is a rational representation of $H$.
\end{enumerate}

\begin{theorem}
    The Hilbert series of $K[W_3]^H$ is 
    \begin{equation}\label{eq:hilb1}
        \frac{P_1(t)}{(1-t^2)(1-t^4)^2(1-t^6)^2(1-t^8)^2(1-t^{10})(1-t^{12})(1-t^{14})}\,,
    \end{equation}

    where
    \begin{align*}
        P_1(t) =~ & t^{58}+2t^{52}+3t^{50}+9t^{48}+16t^{46}+26t^{44}+46t^{42}+69t^{40}+96t^{38}+129t^{36}+\\
               & 159t^{34}+181t^{32}+195t^{30}+195t^{28}+181t^{26}+159t^{24}+129t^{22}+96t^{20}+69t^{18}+\\
               & 46t^{16}+26t^{14}+16t^{12}+9t^{10}+3t^8+2t^6+1.
    \end{align*}
\end{theorem}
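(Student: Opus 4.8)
The plan is to compute the Hilbert series directly from the Weyl integration formula (the Molien--Weyl formula) for the connected linearly reductive group $H$, which is exactly the content of the method of \cite[Section 4.6]{kemper} whose three hypotheses were just verified. A maximal torus of $H$ is $T\simeq \mathbb{G}_m\times \mathbb{G}_m$, an element of which we write $\left(\mathrm{diag}(z,z^{-1}),\mathrm{diag}(w,w^{-1})\right)$, and the Weyl group is $\mathbb{Z}/2\mathbb{Z}\times \mathbb{Z}/2\mathbb{Z}$ of order $4$, acting by $z\mapsto z^{-1}$ and $w\mapsto w^{-1}$ independently. The nonzero roots of $H$ contribute the Haar-measure factor $(1-z^2)(1-z^{-2})(1-w^2)(1-w^{-2})$. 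The first concrete step is to record the weights of $T$ on $W_3$: the monomial $x^iy^{3-i}u^jv^{3-j}$ has weight $z^{2i-3}w^{2j-3}$ for $0\le i,j\le 3$, giving $16$ weights (their set is stable under negation, so passing to $W_3^*$ changes nothing).

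With these weights in hand, the Hilbert series is the extraction of the $z^0w^0$-coefficient
\[
HS(K[W_3]^H,t)=\frac{1}{4}\left[(1-z^2)(1-z^{-2})(1-w^2)(1-w^{-2})\prod_{i,j=0}^{3}\frac{1}{1-t\,z^{2i-3}w^{2j-3}}\right]_{z^0w^0},
\]
realized as the double contour integral $\frac{1}{(2\pi i)^2}\oint\oint(\cdots)\frac{dz}{z}\frac{dw}{w}$ over $|z|=|w|=1$ with $|t|$ small. I would carry this out as an iterated residue: fixing $w$, evaluate the $z$-integral by summing the residues at the poles lying inside the unit circle, which by the symmetry $z\mapsto z^{-1}$ can be organized economically; this yields a rational function in $t$ and $w$. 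Integrating that in $w$ by the same procedure produces a rational function in $t$ alone. The final step is to verify that this rational function equals the claimed $P_1(t)$ over the displayed denominator; since both sides are rational of bounded degree, this is a finite check, e.g.\ by clearing denominators and comparing numerator polynomials, or by matching power-series coefficients up to the relevant order.

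The main obstacle is computational rather than conceptual: the integrand is a product of $16$ geometric factors, so the iterated residue is a sizeable symbolic manipulation that is realistically performed with a computer algebra system, and genuine care is required in the pole-selection step to retain exactly the poles interior to each contour (a standard source of error in $\mathrm{SL}_2$ residue computations). A secondary point worth flagging is that the factored denominator in the statement is not forced by the residue computation itself: the raw output is some reduced rational function $N(t)/D(t)$, and one must separately observe that $D(t)$ divides the product $(1-t^2)(1-t^4)^2(1-t^6)^2(1-t^8)^2(1-t^{10})(1-t^{12})(1-t^{14})$, of degree $74$. The fact that this denominator has exactly $10=\dim W_3-\dim H$ factors matches the Krull dimension of $K[W_3]^H$ and anticipates the homogeneous system of parameters to be produced in the later steps; at the present stage, however, only the equality of the two rational functions needs to be established.
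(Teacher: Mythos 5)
Your proposal is correct and follows essentially the same route as the paper: both apply the Molien--Weyl/residue method of \cite[Section 4.6]{kemper} with the torus weights $z^{\pm(2i-3)}w^{\pm(2j-3)}$ on $W_3$ and extract the constant term by iterated contour integration, first in one torus variable and then in the other (the paper uses the positive-roots numerator $(1-z_1^2)(1-z_2^2)$ without the Weyl-group average $\tfrac14$, which is an equivalent normalization of the same formula). The computational caveats you raise about pole selection and about the chosen denominator not being forced by the raw residue output are exactly the points the paper delegates to its accompanying computer file.
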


\begin{proof}
We use the theory of roots and weights developed in~\cite[A.4 and A.5]{kemper}.

The roots of $H$ are $(1,0), (0,1), (-1,0), (0,-1)$.
We choose a set of simple roots \[\{\alpha_1=(1,0), \alpha_2=(0,1)\}\,.\] 
The fundamental weights associated to this system
of simple roots is \[\lambda_1 = (1/2,0), \lambda_2 = (0,1/2)\,.\]
Let $z_1, z_2$ to be the characters associated to $\lambda_1$, $\lambda_2$ respectively,
with the notation $z_i=z^{\lambda_i}$.

In the canonical basis $x^iy^{3-i}u^jv^{3-j}$, the action of the maximal torus consisting of the matrices
$(\mathrm{diag}(z_1, 1/z_1),\mathrm{diag}(z_2, 1/z_2))$ is diagonal, with diagonal elements being \[z_1^{3-2i}z_2^{3-2j} = z^{\frac{1}{2}(3-2i, 3-2j)}\,.\]
Hence the weights of the $H$-module $W_3$ are \[\lambda_{i,j} = \frac{1}{2}(3-2i,3-2j)\,.\]

By~\cite[Theorem~4.6.3]{kemper}, it follows that the Hilbert series of $K[W_3]^{H}$ is the coefficient of $z_1^0z_2^0$ in the series expansion of the rational function
\begin{align*}
    g_0(z, t) &:= \frac{(1-z^{\alpha_1})(1-z^{\alpha_2})}{\prod\limits_{0\leq i,j\leq 3}(1-z^{\lambda_{i,j}}t)}\\
    & = \frac{(1-z_1^2)(1-z_2^2)}{\prod\limits_{0\leq i,j\leq 3}(1-z_1^{3-2i}z_2^{3-2j}t)}\,.
\end{align*}

In~\cite[File \texttt{HilbertSeries.m}]{Git}, we perform the whole calculation of the Hilbert series.
Let $g_1(z_1, t)$ be the sum of the residues of $g_0(z_1, z_2, t)/z_2$ seen as a rational function in $z_2$ with poles inside the circle $\{\lvert z_2\rvert = 1\}$
(considering that $\lvert t\rvert < \lvert z_1\rvert = 1$).
Finally, we compute the sum of the residues of $g_1(z_1, t)/z_1$ seen as a rational function in $z_1$ with poles inside the circle $\{\lvert z_1\rvert = 1\}$ (considering that $\lvert t\rvert < 1$), and the result is the Hilbert series of $K[W_3]^H$.

After some simplifications, we get Equation~\eqref{eq:hilb1}.
\end{proof}

\subsection{A homogeneous system of parameters for $K[W_3]^{G}$}

We now turn to Step~2 of our strategy, the computation of the nullcone $\mathcal{N}_{W_3}^G$ and the search of a homogeneous system of parameters for $K[W_3]^G$.

\subsubsection{Relations between $K[W_3]^G$ and $K[W_3]^H$}

We show in this paragraph that any homogeneous system of parameters of
$K[W_3]^{G}$ is a homogeneous system of parameters of $K[W_3]^{H}$.

\begin{lemma}
    The algebra $K[W_3]^H$ is integral over $K[W_3]^{G}$.
\end{lemma}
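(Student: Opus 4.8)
```latex
The plan is to show that $K[W_3]^H$ is integral over $K[W_3]^G$ by exhibiting, for each element of $K[W_3]^H$, a monic polynomial with coefficients in $K[W_3]^G$ that it satisfies. The key observation is that $G = H\rtimes\mathbb{Z}/2\mathbb{Z}$ and that the quotient group $\mathbb{Z}/2\mathbb{Z}$ is finite of order $2$. First I would recall from the earlier lemma that $K[W_3]^G = \left(K[W_3]^H\right)^{\mathbb{Z}/2\mathbb{Z}}$, so the problem becomes the classical statement that a ring is integral over its ring of invariants under a finite group action.

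Let $\sigma$ denote the generator of $\mathbb{Z}/2\mathbb{Z}$, acting on $R := K[W_3]^H$ by the coefficient-exchange $a_{ij}\mapsto a_{ji}$. For any $P\in R$, I would consider the polynomial
\[
\Phi_P(X) = (X - P)(X - \sigma\bigcdot P) = X^2 - \big(P + \sigma\bigcdot P\big)X + P\,(\sigma\bigcdot P)\,.
\]
The two coefficients $P + \sigma\bigcdot P$ and $P\,(\sigma\bigcdot P)$ are the elementary symmetric functions of the $\mathbb{Z}/2\mathbb{Z}$-orbit of $P$, hence they are fixed by $\sigma$ and therefore lie in $R^{\mathbb{Z}/2\mathbb{Z}} = K[W_3]^G$. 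Since $\Phi_P$ is monic of degree $2$ with coefficients in $K[W_3]^G$ and $\Phi_P(P) = 0$, the element $P$ is integral over $K[W_3]^G$. As $P\in K[W_3]^H$ was arbitrary, this establishes that $K[W_3]^H$ is integral over $K[W_3]^G$.

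I do not anticipate a serious obstacle here, since the argument is the standard one for integrality over a finite-group fixed subring; the only points requiring a small verification are that $\sigma$ genuinely restricts to a ring automorphism of $R = K[W_3]^H$ (which follows because the $\mathbb{Z}/2\mathbb{Z}$ action normalizes $H$ inside $G$, so it preserves $H$-invariance) and that the symmetric combinations indeed land in $K[W_3]^G$ (which is the identification $K[W_3]^G = (K[W_3]^H)^{\mathbb{Z}/2\mathbb{Z}}$ already recorded). If one prefers, the same conclusion can be phrased via the general fact that for a finite group $\Lambda$ acting on a commutative ring $R$, the ring $R$ is integral over $R^\Lambda$, applied with $\Lambda = \mathbb{Z}/2\mathbb{Z}$ and $R = K[W_3]^H$. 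This integrality is exactly what is needed downstream to transfer a homogeneous system of parameters between the two algebras.
```
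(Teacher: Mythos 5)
Your proof is correct and follows essentially the same route as the paper: both exhibit the monic quadratic $X^2 - (P+\sigma\bigcdot P)X + P(\sigma\bigcdot P)$ whose coefficients are $\sigma$-invariant and hence lie in $K[W_3]^G = (K[W_3]^H)^{\mathbb{Z}/2\mathbb{Z}}$, after noting that $\sigma$ restricts to $K[W_3]^H$ because $H$ is normal in $G$. No discrepancies to report.
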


\begin{proof}
    Let $\sigma : K[W_3]^H \rightarrow K[W_3]^H$ be the restriction of the morphism $\sigma(a_{ij})=a_{ji}$ (defined on $K[W_3]$) to $K[W_3]^H$. 
    Since $H$ is normal in $G$, $\sigma$ is well-defined. Moreover, $K[W_3]^G$ is the subalgebra of $K[W_3]^H$ fixed by $\sigma$.
    
    Let $I\in K[W_3]^H$.
    We have $I^2-(\sigma(I)+I)I+\sigma(I)I = 0$. In addition, since $\sigma$ is an involution, we get $\sigma(\sigma(I)+I) = \sigma(I)+I$ and $\sigma(\sigma(I)I) = \sigma(I)I$.
    Hence $I$ is a root of a unitary polynomial of degree 2 with coefficients in $K[W_3]^{G}$.

    It follows that $K[W_3]^H$ is integral over $K[W_3]^{G}$.
\end{proof}

\begin{cor}\label{cor:nullconeequal}
    We have $\mathcal{N}_{W_3}^G=\mathcal{N}_{W_3}^H$.
\end{cor}

\begin{proof}
    Clearly $K[W_3]^{G}\subset K[W_3]^H$, thus we get $\mathcal{N}_{W_3}^H\subset \mathcal{N}_{W_3}^{G}$.

    For the converse inclusion, let $f\in\mathcal{N}_{W_3}^{G}$, and $I\in K[W_3]^H$. We have \[I^2(f)-[\sigma(I)+I](f) I(f)+[\sigma(I)I](f) = 0\,.\]
    By definition of $f$, $[\sigma(I)+I](f) = [\sigma(I)I](f) = 0$. It follows that $I^2(f)=0$.
\end{proof}

\begin{rem}\label{rem:samehsop}
    It follows from Corollary~\ref{cor:nullconeequal} that any homogeneous system of parameters of $K[W_3]^G$ is a homogeneous system of parameters of $K[W_3]^H$.
    Hence finding a homogeneous system of parameters for $K[W_3]^G$ will kill two birds with one stone.
\end{rem}

\subsubsection{One-parameter subgroups of $G$}

\begin{lemma}
    The 1-parameter subgroups of $H$ are the subgroups of $H$ which are conjugate to
    \begin{equation}\label{eq:1param}
        \left\{\left(\begin{pmatrix}t^k &0\\0& t^{-k}\end{pmatrix}, \begin{pmatrix} t^l &0 \\ 0& t^{-l} \end{pmatrix}\right), t\in K^\times\right\},
    \end{equation}
    for some $(k,l)\in\mathbb{Z}^2\backslash\{(0,0)\}$.
\end{lemma}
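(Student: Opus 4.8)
The plan is to reduce the classification of one-parameter subgroups of $H = \mathrm{SL}_2(K)\times\mathrm{SL}_2(K)$ to the classification of one-parameter subgroups of a single copy of $\mathrm{SL}_2(K)$, which can be carried out explicitly using the representation theory of $\mathbb{G}_m$. First I would observe that any morphism of algebraic groups $\lambda:\mathbb{G}_m\to H$ is determined by its two components $\lambda_1=\pi_1\circ\lambda$ and $\lambda_2=\pi_2\circ\lambda$, where $\pi_i$ denotes the projection onto the $i$-th factor; each $\lambda_i:\mathbb{G}_m\to\mathrm{SL}_2(K)$ is again a morphism of algebraic groups (possibly trivial). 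Thus it suffices to describe the $\lambda_i$ separately and then recombine them.

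To classify a morphism $\lambda_i:\mathbb{G}_m\to\mathrm{SL}_2(K)\subset\mathrm{GL}_2(K)$, I would view it as a two-dimensional rational representation of $\mathbb{G}_m$. Every rational representation of $\mathbb{G}_m$ is diagonalizable and splits as a direct sum of characters $t\mapsto t^n$; hence there is a basis of $K^2$ in which $\lambda_i(t) = \mathrm{diag}(t^{a},t^{b})$ for some integers $a,b$. The condition $\lambda_i(t)\in\mathrm{SL}_2(K)$ forces $t^{a+b}=1$ for all $t$, so $b=-a$; writing $k=a$ (respectively $l$ for the second factor) we obtain $\lambda_i(t)=g_i\,\mathrm{diag}(t^{k},t^{-k})\,g_i^{-1}$ for some change-of-basis matrix $g_i$. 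A short verification shows that $g_i$ may be rescaled so as to lie in $\mathrm{SL}_2(K)$, which is precisely the point that keeps the conjugation inside $H$ rather than merely inside $\mathrm{GL}_2(K)\times\mathrm{GL}_2(K)$.

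Recombining, I would conclude that $\lambda(t) = (g_1,g_2)\bigl(\mathrm{diag}(t^k,t^{-k}),\mathrm{diag}(t^l,t^{-l})\bigr)(g_1,g_2)^{-1}$ with $(g_1,g_2)\in H$, so that the image of $\lambda$ is $H$-conjugate to the stated standard subgroup; conversely each such subgroup is visibly the image of a one-parameter subgroup. Finally, the non-triviality requirement in the definition of a one-parameter subgroup translates exactly into $(k,l)\neq(0,0)$, since $\lambda$ is trivial if and only if both exponents vanish.

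I expect the main obstacle to be bookkeeping rather than conceptual: the genuinely substantive inputs are the diagonalizability of rational representations of $\mathbb{G}_m$ and the fact that the diagonalizing matrix $g_i$ can be taken in $\mathrm{SL}_2(K)$, so that conjugation stays in $H$. Everything else — the splitting into factors and the determinant computation forcing $b=-a$ — is routine. One should also take care to phrase the statement in terms of the \emph{image} of $\lambda$, since strictly speaking a one-parameter subgroup is a morphism, whereas the lemma describes the corresponding subgroup of $H$.
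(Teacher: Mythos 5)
Your proposal is correct and follows essentially the same route as the paper: decompose $\lambda$ into its two components, view each as a rational representation of $\mathbb{G}_m$, diagonalize it with integer exponents, and use the determinant condition to force the exponents to be $(k,-k)$ and $(l,-l)$. You actually supply two details the paper leaves implicit — the determinant computation and the observation that the diagonalizing matrix can be rescaled (using that $K$ is algebraically closed) to lie in $\mathrm{SL}_2(K)$ so the conjugation stays inside $H$ — both of which are welcome.
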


\begin{proof}
    Let $\lambda : \mathbb{G}_m \rightarrow H$ be a morphism of algebraic groups.
    Writing in coordinates, we have $\lambda = (\lambda_1,\lambda_2)$. We can see
    $\lambda_1,\lambda_2$ as rational representations of $\mathbb{G}_m$.
    Since $\mathbb{G}_m$ is a torus, these representations are diagonal in some basis~\cite[
    Section 7.3.3]{procesi}.
    Furthermore, the diagonal elements of $\lambda_1,\lambda_2$ must be integral powers of $t$~\cite[Theorem 1 from Section 7.3.3]{procesi}.
    Therefore, the $1$-parameter subgroups of $H$ are given by Equation~\eqref{eq:1param}.
\end{proof}

\begin{cor}\label{cor:1param}
    The 1-parameter subgroups of $G$ are the subgroups of $G$ which are conjugate to those given by Equation~\eqref{eq:1param}.
\end{cor}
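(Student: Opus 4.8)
The plan is to reduce the statement about $1$-parameter subgroups of $G$ to the one just proved for $H$, using the structure of $G$ as the semidirect product $H \rtimes \mathbb{Z}/2\mathbb{Z}$. First I would recall the key topological input: a $1$-parameter subgroup is by definition a morphism of algebraic groups $\lambda : \mathbb{G}_m \to G$, and the source $\mathbb{G}_m$ is connected (it is the affine scheme of the integral ring $K[t,t^{-1}]$, exactly as was argued for $\mathrm{SL}_2(K)$ in the Hilbert-series section). The image of a connected group under a morphism of algebraic groups is connected, so $\lambda(\mathbb{G}_m)$ lands inside the identity component $G^\circ$ of $G$.

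The second step is to identify that identity component. Since $H = \mathrm{SL}_2(K) \times \mathrm{SL}_2(K)$ is connected and has index $2$ in $G = H \rtimes \mathbb{Z}/2\mathbb{Z}$, it is precisely the identity component: $G^\circ = H$. Indeed, $G$ has exactly two cosets of $H$, namely $H$ itself and the coset containing the involution that swaps the two $\mathrm{SL}_2(K)$ factors; these are disjoint closed-and-open pieces, so $H$ is both open and closed and contains the identity, hence equals $G^\circ$. Consequently every $1$-parameter subgroup $\lambda : \mathbb{G}_m \to G$ factors through $H$, i.e.\ it \emph{is} a $1$-parameter subgroup of $H$.

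The final step is then immediate from the previous lemma. Every $1$-parameter subgroup of $G$ is a $1$-parameter subgroup of $H$, and by the preceding lemma each such subgroup is conjugate \emph{in $H$} to one of the diagonal subgroups of Equation~\eqref{eq:1param}. Since conjugacy in $H$ is a special case of conjugacy in $G$ (as $H \subset G$), these subgroups are in particular conjugate in $G$ to those of Equation~\eqref{eq:1param}. Conversely, the subgroups in Equation~\eqref{eq:1param} and their $G$-conjugates are genuine $1$-parameter subgroups of $G$, which establishes the reverse inclusion and completes the identification.

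I do not expect any serious obstacle here: the entire argument is a connectedness bookkeeping step on top of the already-proved classification for $H$. The only point requiring a little care is the claim $G^\circ = H$, which rests on $H$ being open in $G$; this follows because $G$ is a disjoint union of two translates of the connected set $H$, so each translate is a connected component. Everything else is a direct transfer of the $H$-result across the inclusion $H \subset G$.
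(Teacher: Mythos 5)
Your argument is correct and is essentially the paper's own proof: both reduce to the connectedness of $\mathbb{G}_m$, which forces any morphism $\mathbb{G}_m\to G$ to land in the identity component $H$, and then invoke the classification of $1$-parameter subgroups of $H$. Your additional remarks (that $G^\circ=H$ because $H$ has index $2$ and is open and closed, and that $H$-conjugacy implies $G$-conjugacy) only make explicit what the paper leaves implicit.
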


\begin{proof}
    The image of the connected component of the identity by a morphism of algebraic groups
    is included in the connected component of the identity of the image. Thus, since $\mathbb{G}_m$ is connected (it is the affine scheme of an integral ring),
    any morphism of algebraic groups $\mathbb{G}_m\rightarrow G$ has its image contained in $H$.
\end{proof}
\medskip

Now we have all the tools we need to characterize $\mathcal{N}_{W_3}^G$.
But prior to doing that, we study the algebra $K[W_2]^G$.
For computational reasons, it seems that this detour is mandatory.

\subsubsection{Interlude: a homogeneous system of parameters for $K[W_2]^G$}\label{sec:hsopw2}

Although $K[W_2]^G$ was well understood in the 20th century~\cite{turnbull, olver}, we provide the results we need in more modern language.
The knowledge of a homogeneous system of parameters for $K[W_2]^G$ greatly simplifies the proof of Theorem~\ref{thm:hsop}.
Moreover, our strategy is the same for the study of $K[W_3]^G$ and $K[W_2]^G$:

\begin{enumerate}[label=(\Roman*)]
    \item Computation of the nullcone,
    \item Exhibition of a set of representatives,
    \item Proof that we found a homogeneous system of parameters.
\end{enumerate}

Step II is essential: by exhibiting a set of representatives for the $G$-action,
the vanishing of some invariants can be more easily exploited, thus simplifying Step III.

\begin{prop}\label{prop:nullcone_w2}
    The nullcone $\mathcal{N}_{W_2}^G$ is composed of the forms which are $G$-equivalent to a biquadratic form with representation Table~\ref{tab:NullconeW2}.
    \tabcolsep=0.11cm
    \begin{figure}[h]
        \centering    \tabcolsep=0.11cm
        \renewcommand{\arraystretch}{1.2}
        \begin{tabular}{|>{\centering}m{.6cm}|>{\centering}m{.6cm}|>{\centering}m{.6cm}|>{\centering\arraybackslash}m{.6cm}|}
        \cline{2-4} \multicolumn{1}{c|}{}& $x^2$ & $xy$ & $y^2$\\
        \hline
        $u^2$ &   &  & \cellcolor{gray!40} \\
        \hline
        $uv$ &   &  & \cellcolor{gray!40} \\
        \hline
        $v^2$ &  & \cellcolor{gray!40} & \cellcolor{gray!40}\\
        \hline
        \end{tabular}
        \subcaption{Elements of $\mathcal{N}_{W_2}^G$}
        \label{tab:NullconeW2}
    \end{figure}
\end{prop}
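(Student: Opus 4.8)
The plan is to apply the Hilbert--Mumford criterion (Proposition~\ref{Hilbert-Mumford}) together with the classification of one-parameter subgroups from Corollary~\ref{cor:1param}. Since every $1$-parameter subgroup of $G$ is conjugate to a diagonal one of the form~\eqref{eq:1param}, and since conjugating $\lambda$ by $g\in G$ satisfies $\mu(f,g\lambda g^{-1})=\mu(g^{-1}\bigcdot f,\lambda)$, a biquadratic form $f$ lies in $\mathcal{N}_{W_2}^{G}$ if and only if it is $G$-equivalent to a form $f'$ destabilized by a \emph{diagonal} one-parameter subgroup $\lambda_{k,l}$ with $(k,l)\in\mathbb{Z}^2\setminus\{(0,0)\}$. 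First I would compute, for such a $\lambda_{k,l}$, the weight of the monomial $x^{2-j}y^ju^{2-i}v^i$; a direct substitution gives that this weight equals $2\bigl(k(j-1)+l(i-1)\bigr)$. Thus $f'$ is destabilized by $\lambda_{k,l}$ exactly when every cell $(i,j)$ in the support of $f'$ satisfies $k(j-1)+l(i-1)>0$.

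The key observation I would record next is that the nine cells of the $3\times 3$ table pair up as $\{(i,j),(2-i,2-j)\}$ around the central cell $(1,1)$, and that the weight is antisymmetric under this pairing, since $k((2-j)-1)+l((2-i)-1)=-\bigl(k(j-1)+l(i-1)\bigr)$. Consequently, in each of the four pairs at most one cell can carry a strictly positive weight, while the central cell always has weight $0$ and can therefore never appear in the support of a destabilized form. Hence any destabilizable support contains at most four cells, which bounds the problem and reduces it to identifying the maximal destabilizable supports.

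To pin these down, I would exploit the symmetries of $G$: the Weyl reflection in each $\mathrm{SL}_2$ factor (an element of $H$) sends $(k,l)\mapsto(-k,l)$ or $(k,l)\mapsto(k,-l)$ while reversing the corresponding columns or rows of the table, and the involution in $\mathbb{Z}/2\mathbb{Z}$ sends $(k,l)\mapsto(l,k)$ while transposing the table. Using these, I may assume the generic destabilizing parameter lies in the chamber $k>l>0$. There the four cells of strictly positive weight are exactly $(0,2),(1,2),(2,1),(2,2)$, which is precisely the support displayed in Table~\ref{tab:NullconeW2}; the degenerate directions ($l=0$, $k=\pm l$, and so on) yield proper subsets of this support, and every other chamber is carried onto it by the symmetries above. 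This gives the inclusion of $\mathcal{N}_{W_2}^{G}$ into the set of forms $G$-equivalent to Table~\ref{tab:NullconeW2}, and the reverse inclusion follows at once, since that support is destabilized by $\lambda_{2,1}$, so every such form is unstable.

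The main obstacle I anticipate is not any single computation but the bookkeeping of the case analysis over the sign pattern of $(k,l)$, together with the verification that each normalization used is realized by an honest element of $G$ so that it genuinely preserves $G$-equivalence. The pairing argument is what keeps this manageable: it caps the support at four cells and isolates a single chamber, so that only the one representative $\lambda_{2,1}$ must be checked explicitly.
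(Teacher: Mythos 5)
Your proposal is correct and follows essentially the same route as the paper: invoke the Hilbert--Mumford criterion, reduce to a diagonal one-parameter subgroup $\lambda_{k,l}$ via Corollary~\ref{cor:1param}, and use the Weyl reflections and the $\mathbb{Z}/2\mathbb{Z}$ swap to collapse the sign/ordering cases of $(k,l)$ to a single chamber whose positive-weight cells are exactly those of Table~\ref{tab:NullconeW2}. Your antisymmetry pairing of cells and the explicit check of the reverse inclusion via $\lambda_{2,1}$ are nice additions that the paper leaves implicit, but they do not change the method.
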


\begin{proof}
    Let $f\in \mathcal{N}_{W_2}^G$.
    By Hilbert-Mumford criterion~\ref{Hilbert-Mumford}, there exists a $1$-parameter subgroup $\lambda$ of $G$ such that $\mu(f,\lambda)>0$. 
    We choose a basis in which the action of $\lambda$ is diagonal, as in Equation~\eqref{cor:1param}.
    
    If $k > 0$ then the change of variables
    $\left(\begin{smallmatrix}
        0&-1\\
        1&0
    \end{smallmatrix}\right)$,
     transforms
    $\left(\begin{smallmatrix}
        t^k&0\\
        0&t^{-k}
    \end{smallmatrix}\right)$
    into
    $\left(\begin{smallmatrix}
        t^{-k}&0\\
        0&t^{k}
    \end{smallmatrix}\right)$.

    Hence we assume without loss of generality that $k,l\leq 0$. 
    Since $\mu(f,\lambda)>0$, the conditions on the coefficients of $f$ correspond to representation tables \ref{tab:nullW2_1} and \ref{tab:nullW2_2}.

\tabcolsep=0.11cm

    \begin{figure}[h]
        \begin{subfigure}[h]{0.3\textwidth}
            \centering    \tabcolsep=0.11cm
            \renewcommand{\arraystretch}{1.2}
            \begin{tabular}{|>{\centering}m{.6cm}|>{\centering}m{.6cm}|>{\centering}m{.6cm}|>{\centering\arraybackslash}m{.6cm}|}
            \cline{2-4} \multicolumn{1}{c|}{}& $x^2$ & $xy$ & $y^2$\\
            \hline
            $u^2$ &   &  & \cellcolor{gray!40} \\
            \hline
            $uv$ &   &  & \cellcolor{gray!40} \\
            \hline
            $v^2$ &  & \cellcolor{gray!40} & \cellcolor{gray!40}\\
            \hline
            \end{tabular}
            \subcaption{Case $k\leq l\leq 0$}
            \label{tab:nullW2_1}
        \end{subfigure}
        \begin{subfigure}[h]{0.3\textwidth}
            \centering    \tabcolsep=0.11cm
            \renewcommand{\arraystretch}{1.2}
            \begin{tabular}{|>{\centering}m{.6cm}|>{\centering}m{.6cm}|>{\centering}m{.6cm}|>{\centering\arraybackslash}m{.6cm}|}
        \cline{2-4} \multicolumn{1}{c|}{}& $x^2$ & $xy$ & $y^2$\\
        \hline
        $u^2$ &   &  &  \\
        \hline
        $uv$ &   &  & \cellcolor{gray!40} \\
        \hline
        $v^2$ & \cellcolor{gray!40} & \cellcolor{gray!40} & \cellcolor{gray!40}\\
        \hline
        \end{tabular}
        \subcaption{Case $l\leq k\leq 0$}
        \label{tab:nullW2_2}
    \end{subfigure}
    \end{figure}

Up to exchanging $(x,y)$ and $(u,v)$, which is a transformation in the group $G$, these cases
are the same. Hence $f$ is $G$-equivalent to a biform with representation Table~\ref{tab:NullconeW2}.
\end{proof}

\begin{lemma}\label{lemma:class_eq_w2} 
    Let $q\in W_2$. Then $q$ is $G$-equivalent to one of the biquadratic forms given in
  Figure~\ref{fig:W2Classes}.
\end{lemma}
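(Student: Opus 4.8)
The plan is to classify the coefficient table $M=(a_{ij})_{0\le i,j\le 2}$ of $q$ up to $G$, organizing everything around one discrete invariant: the rank $r\in\{0,1,2,3\}$ of the $3\times 3$ matrix $M$. First I would check that $r$ is $G$-invariant. The action of $H=\mathrm{SL}_2(K)\times\mathrm{SL}_2(K)$ on $W_2=\mathrm{Sym}^2(K^2)\otimes\mathrm{Sym}^2(K^2)$ reads $M\mapsto \rho(M_2)\,M\,\rho(M_1)^{\mathsf T}$, where $\rho\colon\mathrm{SL}_2(K)\to\mathrm{GL}_3(K)$ is the representation on binary quadratics; since each $\rho(M_i)$ is invertible and the generator of $\mathbb{Z}/2\mathbb{Z}$ acts by $M\mapsto M^{\mathsf T}$, the rank of $M$ is preserved. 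This splits the statement into four cases. The unstable forms isolated in Proposition~\ref{prop:nullcone_w2} reappear as the most degenerate representatives and can be used as a cross-check.

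The reductions themselves use only the explicit generators of $G$ recalled in the introduction: the changes of variables to the right and downwards (unipotent shears on $(x,y)$, resp.\ $(u,v)$), the renormalizations (the maximal torus), and the swap. On $M$ these act by the triangular column, resp.\ row, operations induced by $\rho$, by scaling the three columns (resp.\ rows) with torus weights $s^{2},s^{0},s^{-2}$, and by transposition. For $r=0$ one gets $q=0$. For $r=1$ the matrix factors as $M=\mathbf v\,\mathbf w^{\mathsf T}$, hence $q=P(x,y)\,R(u,v)$; I would reduce each binary quadratic on its own, using that over an algebraically closed field every nonzero binary quadratic is $\mathrm{SL}_2$-equivalent either to $x^2$ or to a scalar multiple of $xy$, and then absorb scalars with the torus. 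The one place where a continuous parameter genuinely survives is the form $c\,xy\,uv$: the middle weight vector $xy$ is fixed by the torus and is non-isotropic for the discriminant form, so the scalar $c$ cannot be removed. This is consistent with its being detected by the degree-$2$ invariant $(q,q)_{2}$.

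For $r=2,3$ I would use the discriminant covariant. Writing $q=\mathbf a(u,v)x^2+\mathbf b(u,v)xy+\mathbf c(u,v)y^2$, the binary quartic $\delta(u,v)=(q,q)_{2,0}=\mathbf b^2-4\mathbf a\,\mathbf c$ is an $H$-covariant of bidegree $(0,4)$, invariant under the first $\mathrm{SL}_2(K)$ and transforming as a quartic under the second; the swap exchanges it with the symmetric quartic $\delta'(x,y)$. Its roots in $\mathbb{P}^1_{(u,v)}$ are exactly the points where the $(x,y)$-slice of $q$ degenerates, and the $\mathrm{SL}_2(K)$-orbit of a binary quartic is governed by the multiplicity pattern of its roots (the partitions of $4$) together with the cross-ratio in the separable case. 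Running over these patterns, and their counterparts for $\delta'$, organizes the higher-rank strata; structurally this is the Weierstrass--Kronecker classification of the pencil of binary quadratics spanned by $\mathbf a,\mathbf b,\mathbf c$ relative to the discriminant form, whose finite list of Segre types yields the entries of Figure~\ref{fig:W2Classes}, with the eigenvalues (equivalently the cross-ratio) supplying the continuous parameters. For each type I would reconstruct a representative and then sparsify it with the residual orthogonal freedom to reach the displayed table.

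The hardest part is the degenerate higher-rank strata, where the discriminant form has isotropic eigendirections: over an algebraically closed field the orthogonal normal forms are not merely diagonal --- as a real singular value decomposition would suggest --- but acquire Jordan blocks attached to the isotropic directions, and these produce the nongeneric table shapes. Two bookkeeping points require care. First, one must confirm that passing to $\delta$ (or to the pencil) loses no information beyond a finite ambiguity, and reabsorb that ambiguity using the swap or a sign normalization; this is where the distinction between $\mathrm{O}$ and $\mathrm{SO}$, i.e.\ between $\mathrm{SL}_2(K)$ and $\mathrm{PGL}_2(K)$ scalars, must be tracked to decide precisely which normal forms keep a free parameter. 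Second, one must check the list is exhaustive and irredundant: each Segre/root pattern gives exactly one table, and distinct patterns give $G$-inequivalent forms. Establishing that dictionary is the crux of the argument.
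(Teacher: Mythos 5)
Your plan stops short of a proof at exactly the point where the lemma lives. The preliminary observations (invariance of the rank of the $3\times 3$ coefficient matrix under $M\mapsto\rho(M_2)M\rho(M_1)^{\mathsf T}$ and transposition, the $\mathrm{SL}_2$-normal forms of a single binary quadratic, the discriminant quartic) are all fine, but the statement to be proved is precisely the final step you defer: that every stratum can be moved, by explicit elements of $G$, into one of the three coefficient shapes of Figure~\ref{fig:W2Classes}. You write that you ``would reconstruct a representative and then sparsify it'' and that ``establishing that dictionary is the crux of the argument''; that dictionary is the entire content of Lemma~\ref{lemma:class_eq_w2}, and nothing in the proposal carries it out. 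Moreover the dictionary as you envision it (one Segre or root pattern per table, distinct patterns $G$-inequivalent) misreads the figure: the three tables are not orbits but coarse shapes with several free coefficients, each of which contains many Segre strata, and the lemma claims neither that the three types are mutually inequivalent nor that the reduction is unique (the paper even remarks afterwards that more coefficients could have been normalized). Chasing a full orbit classification --- cross-ratios, Jordan blocks attached to isotropic directions, the $\mathrm{O}$ versus $\mathrm{SO}$ bookkeeping --- therefore adds genuine difficulty while aiming at something the lemma does not assert.

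For contrast, the paper's proof consists of three elementary moves on the coefficients $b_{ij}$ directly. If the coefficient $b_{22}$ of $x^2u^2$ is nonzero, the downwards shear $u\leftarrow u'-\alpha v$ with $\alpha$ a root of $b_{22}\alpha^2-b_{21}\alpha+b_{20}=0$ (this is where algebraic closure enters) kills the $x^2v^2$ coefficient, and the swap $u\leftarrow v'$, $v\leftarrow -u'$ then puts that zero in the $b_{22}$ slot; so one may assume $b_{22}=0$. One then cases on whether $b_{12}$ and $b_{21}$ vanish: both zero gives Type~I; exactly one nonzero (after the $\mathbb{Z}/2\mathbb{Z}$ swap, say $b_{12}$) is cleaned up by one shear in each variable pair and a renormalization to give Type~II; both nonzero gives Type~III similarly. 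To salvage your approach you would have to write out, for each rank and Segre stratum, the explicit group element landing its representative in one of the three tables --- at which point you would essentially be redoing this elementary argument case by case.
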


\begin{figure}[H]
    \centering    \tabcolsep=0.11cm
    \renewcommand{\arraystretch}{1.2}
  \begin{subfigure}[h]{0.3\textwidth}
    \begin{tabular}{|>{\centering}m{.6cm}|>{\centering}m{.6cm}|>{\centering}m{.6cm}|>{\centering\arraybackslash}m{.6cm}|}
      \cline{2-4} \multicolumn{1}{c|}{}
      & $x^2$ & $xy$ & $y^2$\\
      \hline
      $u^2$ &   &  & \cellcolor{gray!40} \\
      \hline
      $uv$ &  & \cellcolor{gray!40} & \cellcolor{gray!40}\\
      \hline
      $v^2$ &\cellcolor{gray!40} & \cellcolor{gray!40} & \cellcolor{gray!40} \\
      \hline
    \end{tabular}
    \subcaption{Type~I}
    \label{tab:W2TypeI}
  \end{subfigure}
  \ %
  \begin{subfigure}[h]{0.3\textwidth}
    \begin{tabular}{|>{\centering}m{.6cm}|>{\centering}m{.6cm}|>{\centering}m{.6cm}|>{\centering\arraybackslash}m{.6cm}|}
      \cline{2-4} \multicolumn{1}{c|}{}
      & $x^2$ & $xy$ & $y^2$\\
      \hline
      $u^2$ &  & 1\cellcolor{gray!40} &  \\
      \hline
      $uv$ & & & \cellcolor{gray!40}\\
      \hline
      $v^2$ & \cellcolor{gray!40}& \cellcolor{gray!40} &  \cellcolor{gray!40}\\
      \hline
    \end{tabular}
  \subcaption{Type~II}
  \label{tab:W2TypeII}
  \end{subfigure}
  \ %
  \begin{subfigure}[h]{0.3\textwidth}
    \begin{tabular}{|>{\centering}m{.6cm}|>{\centering}m{.6cm}|>{\centering}m{.6cm}|>{\centering\arraybackslash}m{.6cm}|}
      \cline{2-4} \multicolumn{1}{c|}{}
      & $x^2$ & $xy$ & $y^2$\\
      \hline
      $u^2$ &   & 1\cellcolor{gray!40} &  \\
      \hline
      $uv$ & 1\cellcolor{gray!40}  & & \cellcolor{gray!40}\\
      \hline
      $v^2$ & \cellcolor{gray!40}& \cellcolor{gray!40} & \cellcolor{gray!40} \\
      \hline
    \end{tabular}
  \subcaption{Type~III}
  \label{tab:W2TypeIII}
  \end{subfigure}
  \caption{Representatives of $W_2$ for the $G$-action}
  \label{fig:W2Classes}
\end{figure}

\begin{proof}
    Let $q\in W_2$, with coefficients $b_{ij}\in K$.
    If $b_{22} \ne 0$, we let $\alpha$ be a solution of $b_{22}\alpha^2-b_{21}\alpha+b_{20} = 0$.
    The change of variables $u\leftarrow u'-\alpha v$ makes $b_{20}$ vanish.
    After the change of variables $u\leftarrow v'$, $v\leftarrow -u'$ the new coefficient $b_{22}$ is $0$. 
    Hence we can always assume that $b_{22} = 0$.
    \medskip

    The case $b_{12}=b_{21}=0$ corresponds to Table~(\ref{tab:W2TypeI}).
    \medskip

    If exactly one of them is not 0 (let's say $b_{12}$ for the computations, the involution of $G$ that exchanges $(x,y)$ and $(u,v)$ allows us to assume so),
    then we can transform $b_{11}$ and $b_{02}$ into $0$ with the change of variables $x \leftarrow x'-\alpha y, u \leftarrow u'-\beta v$, where $\alpha = b_{02}/b_{12}$ and $\beta = b_{11}/(2b_{12})$.
    Then, with the renormalization $u \leftarrow \mu u', v \leftarrow 1/\mu v'$, where $\mu^2 = 1/b_{12}$, $b_{12}$ becomes $1$.
    This corresponds to Table~(\ref{tab:W2TypeII}).
    \medskip

    If none of $b_{12}$ and $b_{12}$ vanish, then with similar transformations we get Table~(\ref{tab:W2TypeIII}).
\end{proof}

\begin{rem}
    By increasing the number of orbits, we could have set more coefficients. We chose not to, for the sake of simplicity.
\end{rem}

\begin{theorem}[\protect{\cite[File \texttt{ProofHSOP.m}]{Git}}]\label{thm:w2}
    Let $q$ be a biquadratic form with generic coefficients $b_{ij}$. We define $J_2 = (q,q)_2$, $J_3 = ((q,q)_1, q)_2$, and $J_4 = ((q,q)_2,(q,q)_2)_2$.
    Then \[\mathcal{N}_{W_2}^G=V(J_2,J_3,J_4)\,.\]
\end{theorem}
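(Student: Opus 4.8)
The plan is to establish the set equality $\mathcal{N}_{W_2}^G=V(J_2,J_3,J_4)$ by proving the two inclusions, after first checking that $J_2$, $J_3$, $J_4$ are genuine positive-degree elements of $K[W_2]^G$. Each is built from $q\in W_2$ by iterated transvectants in which the two indices coincide ($k=l$); since such transvectants are $G$-equivariant and send a pair of elements of $W_m$ into $W_0$, each intermediate output is a $G$-covariant and each final transvectant lands in $W_{0,0}=K$. Counting the occurrences of $q$ shows that $J_i$ is a $G$-invariant of degree $i$ for $i=2,3,4$.

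The inclusion $\mathcal{N}_{W_2}^G\subseteq V(J_2,J_3,J_4)$ is then immediate: by the very definition of the nullcone, every element of $K[W_2]_{>0}^G$ vanishes on $\mathcal{N}_{W_2}^G$, and in particular so do $J_2,J_3,J_4$.

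For the reverse inclusion I would use that both sides are $G$-stable, namely $\mathcal{N}_{W_2}^G$ by construction and $V(J_2,J_3,J_4)$ because the $J_i$ are $G$-invariants, so that membership may be tested on a single representative of each $G$-orbit. By Lemma~\ref{lemma:class_eq_w2} every $q\in W_2$ is $G$-equivalent to a form of Type~I, II or III, each depending on a handful of free parameters. Substituting these parametrized tables into the transvectant formulas turns $J_2,J_3,J_4$ into explicit polynomials in the parameters; this is the content of the file \texttt{ProofHSOP.m}. I would then solve the system $J_2=J_3=J_4=0$ within each type and check that every solution is $G$-equivalent to the nullcone representative of Table~\ref{tab:NullconeW2}. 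Concretely, for each degenerate solution one exhibits a $1$-parameter subgroup $\lambda$ of $G$ of the diagonal shape of Corollary~\ref{cor:1param} with $\mu(q,\lambda)>0$, whereupon the Hilbert-Mumford criterion (Proposition~\ref{Hilbert-Mumford}) yields $q\in\mathcal{N}_{W_2}^G$; equivalently, one verifies that for the parameter values \emph{not} lying in the nullcone at least one $J_i$ is nonzero, so that the cut-out is sharp.

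The main obstacle is precisely this last case analysis: one must ensure, type by type, that the three vanishing conditions carve out \emph{exactly} the nullcone locus and admit no spurious stable orbit, and that the $1$-parameter subgroups produced in the degenerate cases genuinely destabilize $q$. The transvectant evaluations themselves are routine and mechanized, so the real work is the bookkeeping of the three cases and their parameter specializations. Once $\mathcal{N}_{W_2}^G=V(J_2,J_3,J_4)$ is established, Corollary~\ref{cor:hsopnullcone} together with $\dim K[W_2]^G=3$ upgrades $J_2,J_3,J_4$ to a homogeneous system of parameters, which is the use made of this theorem in the sequel.
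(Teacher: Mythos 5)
Your proposal is correct and follows essentially the same route as the paper: the inclusion $\mathcal{N}_{W_2}^G\subseteq V(J_2,J_3,J_4)$ is immediate from the definition of the nullcone, and the reverse inclusion is obtained by reducing to the parametrized representatives of Lemma~\ref{lemma:class_eq_w2} and verifying computationally (in the paper, via the radical decomposition of the ideal generated by the $J_i$ in the remaining parameters) that every common zero is $G$-equivalent, possibly after one further change of variables, to the nullcone table of Proposition~\ref{prop:nullcone_w2}. The only cosmetic difference is that the paper phrases the final step as matching the radical components against that representation table rather than re-invoking the Hilbert--Mumford criterion directly, but since the table was itself derived from the criterion the two formulations coincide.
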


This theorem is proved in~\cite[File \texttt{ProofHSOP.m}]{Git}, but we detail the proof for
the generic case, which corresponds to Table~(\ref{tab:W2TypeIII}).

\begin{proof}
    Let $q\in V(J_2,J_3,J_4)$. By Lemma~\ref{lemma:class_eq_w2}, we can write $q$ as one of the representatives given in Figure~\ref{fig:W2Classes}.
    For each of them, we take the unknown coefficients to be indeterminates to cover all possible $q$.
    For Table~(\ref{tab:W2TypeIII}), we get $q = \sum_{0\leq i,j\leq 2}b_{ij}x^iy^{2-i}u^jv^{2-j}$, where $b_{22} = b_{21} = b_{12} = 0$,
    and the other $b_{ij}$ are indeterminates.
    \medskip

    We let $I = (J_2(q), J_3(q), J_4(q))$ be an ideal in $K[b_{20}, b_{10}, b_{01}, b_{00}]$.
    \medskip

    The radical decomposition of $I$ gives two possibilities: either $q$ has representation table
    
    \tabcolsep=0.11cm
    \renewcommand{\arraystretch}{1.2}
    \begin{center}
        \begin{tabular}{|>{\centering}m{.6cm}|>{\centering}m{.6cm}|>{\centering}m{.6cm}|>{\centering\arraybackslash}m{.6cm}|}
        \cline{2-4}
        \multicolumn{1}{c|}{} 
        & $x^2$ & $xy$ & $y^2$\\
        \hline
        $u^2$ &   & 1\cellcolor{gray!40} &  \\
        \hline
        $uv$ & 1\cellcolor{gray!40}  & &\\
        \hline
        $v^2$ & \cellcolor{gray!40} & & \\
        \hline
        \end{tabular}\,,
    \end{center} in which case $q\in\mathcal{N}_{W_2}^G$, either we make the change of variables $x \leftarrow x'+b_{20}y, u \leftarrow u'-b_{20}v$,
    which transforms the representation table of $q$ into
    
    \tabcolsep=0.11cm
    \renewcommand{\arraystretch}{1.2}
    \begin{center}
        \begin{tabular}{|>{\centering}m{.6cm}|>{\centering}m{.6cm}|>{\centering}m{.6cm}|>{\centering\arraybackslash}m{.6cm}|}
        \cline{2-4}
        \multicolumn{1}{c|}{} 
        & $x^2$ & $xy$ & $y^2$\\
        \hline
        $u^2$ & & 1\cellcolor{gray!40} & \cellcolor{gray!40} \\
        \hline
        $uv$ & 1\cellcolor{gray!40}  & & \\
        \hline
        $v^2$ & & & \\
        \hline
        \end{tabular}\,.
    \end{center}

    In any case, $q\in\mathcal{N}_{W_2}^G$.
\end{proof}

\subsubsection{A homogeneous system of parameters for $K[W_3]^G$}
\label{sec:homog-syst-param}

%
%
%
%
%
%


\begin{prop}
    The nullcone of $\mathcal{N}_{W_3}^G$ is composed of the bicubic forms whose representation table is given in Figure~\ref{fig:nullW3}:
\end{prop}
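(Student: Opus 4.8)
The plan is to adapt, to the larger monomial grid of a bicubic form, the argument used for Proposition~\ref{prop:nullcone_w2}. By the Hilbert--Mumford criterion (Proposition~\ref{Hilbert-Mumford}), a biform $f$ lies in $\mathcal{N}_{W_3}^G$ if and only if there is a $1$-parameter subgroup $\lambda$ of $G$ with $\mu(f,\lambda)>0$. By Corollary~\ref{cor:1param} every such $\lambda$ is conjugate to a diagonal torus, so up to replacing $f$ by a $G$-equivalent biform (which does not change membership in the nullcone) I may assume $\lambda=\lambda_{k,l}$ with $\lambda_{k,l}(t)=(\mathrm{diag}(t^k,t^{-k}),\mathrm{diag}(t^l,t^{-l}))$ for some $(k,l)\in\mathbb{Z}^2\setminus\{(0,0)\}$. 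First I would compute, exactly as in the Hilbert series computation, the weight under $\lambda_{k,l}$ of the monomial sitting in cell $(r,c)$ of the representation table, obtaining a quantity proportional to $k(2c-3)+l(2r-3)$. The condition $\mu(f,\lambda_{k,l})>0$ then says precisely that the support of $f$ is contained in the set of cells on which this linear form is positive.

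Next I would use the symmetries of $G$ to restrict the directions $(k,l)$ that must be examined. The rotation $\left(\begin{smallmatrix}0&-1\\1&0\end{smallmatrix}\right)$ in either $\mathrm{SL}_2(K)$ factor flips the sign of $k$ or of $l$, while the involution of $\mathbb{Z}/2\mathbb{Z}$ exchanges $k$ and $l$; together these generate a dihedral group of order $8$ on $(k,l)$, so it suffices to analyse the fundamental cone $k\ge l\ge 0$. Geometrically, the cells correspond to the $16$ lattice points of the grid $\{-3,-1,1,3\}^2$, and a direction $(k,l)$ selects those points lying strictly on one side of the line $\{k\,p+l\,q=0\}$ through the origin; by central symmetry such a set contains at most $8$ cells. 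Sweeping the direction across the cone, the selected set is constant except when the line crosses a grid point, so I would record the finitely many maximal patterns that arise between consecutive critical directions. I expect to obtain exactly the tables of Figure~\ref{fig:nullW3}: a ``two columns'' pattern (forms divisible by $y^{2}$, produced by the boundary direction $(1,0)$) and a ``staircase'' pattern (produced by an interior direction such as $(2,1)$), which are genuinely inequivalent under the $G$-symmetries.

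Finally I would prove both inclusions. If $f$ is supported on one of the listed tables, the defining direction furnishes an explicit $\lambda_{k,l}$ with $\mu(f,\lambda_{k,l})>0$, whence $f\in\mathcal{N}_{W_3}^G$; conversely the analysis above shows that any element of $\mathcal{N}_{W_3}^G$ is $G$-equivalent to a form whose support lies in one of these maximal patterns. The main obstacle, relative to the biquadratic case, is purely combinatorial: the grid now has $16$ points instead of $9$, several critical directions occur inside the fundamental cone, and the maximal destabilizing patterns are no longer all $G$-equivalent, so one must enumerate them carefully and check that none is missed and that none can be enlarged. A convenient consistency check is that the ``two columns'' pattern corresponds exactly to bicubics carrying a repeated ruling on the quadric, which are visibly singular and therefore unstable.
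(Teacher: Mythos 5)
Your argument is correct and is essentially the approach the paper intends: the paper's own proof of this proposition is the single remark that it is ``very similar'' to that of Proposition~\ref{prop:nullcone_w2}, i.e.\ the Hilbert--Mumford criterion, reduction to a diagonal one-parameter subgroup via Corollary~\ref{cor:1param}, and a case analysis on the monomial weights, normalized by the sign flips and the $\mathbb{Z}/2\mathbb{Z}$ swap. Your sweep over the critical directions in the cone $k\ge l\ge 0$ correctly produces the two maximal patterns of Figure~\ref{fig:nullW3} (the two-column pattern from $(1,0)$ and the staircase from $(2,1)$, the intermediate directions giving only subsets of these), which is precisely the additional combinatorial bookkeeping the $4\times 4$ grid requires.
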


        \begin{figure}[H]
        \centering    \tabcolsep=0.11cm
        \renewcommand{\arraystretch}{1.2}
        \begin{subfigure}[h]{0.35\textwidth}
        \begin{tabular}{|>{\centering}m{.6cm}|>{\centering}m{.6cm}|>{\centering}m{.6cm}|>{\centering}m{.6cm}|>{\centering\arraybackslash}m{.6cm}|}
        \cline{2-5}
        \multicolumn{1}{c|}{} 
        & $x^3$ & $x^2y$ & $xy^2$ & $y^3$\\
        \hline
        $u^3$   && & \cellcolor{gray!40} & \cellcolor{gray!40}\\
        \hline
        $u^2v$  && & \cellcolor{gray!40} & \cellcolor{gray!40} \\
        \hline
        $uv^2$  &&& \cellcolor{gray!40} & \cellcolor{gray!40} \\
        \hline
        $v^3$  &&& \cellcolor{gray!40} & \cellcolor{gray!40}   \\
        \hline
        \end{tabular}\label{fig:tab_W31}
        \caption{Type I}
    \end{subfigure}
    \begin{subfigure}[h]{0.35\textwidth}
        \begin{tabular}{|>{\centering}m{.6cm}|>{\centering}m{.6cm}|>{\centering}m{.6cm}|>{\centering}m{.6cm}|>{\centering\arraybackslash}m{.6cm}|}
        \cline{2-5}
        \multicolumn{1}{c|}{} 
        & $x^3$ & $x^2y$ & $xy^2$ & $y^3$\\
        \hline
        $u^3$ & &&& \cellcolor{gray!40} \\
        \hline
        $u^2v$ &&& \cellcolor{gray!40}   &  \cellcolor{gray!40} \\
        \hline
        $uv^2$ & &&\cellcolor{gray!40}   & \cellcolor{gray!40}  \\
        \hline
        $v^3$ && \cellcolor{gray!40}  & \cellcolor{gray!40} & \cellcolor{gray!40}    \\
        \hline
        \end{tabular}\label{fig:tab_W32}
        \caption{Type II}
    \end{subfigure}
    \caption{Elements of the nullcone $\mathcal{N}_{W_3}^G$}
    \label{fig:nullW3}
    \end{figure}

The proof is very similar to that of Proposition~\ref{prop:nullcone_w2}.

In order to find a homogeneous system of parameters for $K[W_3]^G$,
we argue in the same way as in Section~\ref{sec:hsopw2}.

\begin{lemma}\label{lem:class_eq_w3}
    Let $f\in W_3$. Then $f$ is $G$-equivalent to a bicubic form whose representation table is given in Figure~\ref{fig:equi_class_w3}.
\end{lemma}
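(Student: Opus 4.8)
The plan is to follow the blueprint of Lemma~\ref{lemma:class_eq_w2}: reduce an arbitrary $f\in W_3$ to a sparse normal form by repeatedly applying the elementary transformations that generate $G$, namely the right and downward changes of variables $x\leftarrow x'-\alpha y$ and $u\leftarrow u'-\beta v$, the quarter-turns $\left(\begin{smallmatrix}0&-1\\1&0\end{smallmatrix}\right)$ acting on either pair of variables, the renormalizations, and the transposition coming from the $\mathbb{Z}/2\mathbb{Z}$ factor. Throughout I would reason on the $4\times4$ table, writing $f=u^3p_3(x,y)+u^2v\,p_2(x,y)+uv^2\,p_1(x,y)+v^3p_0(x,y)$ with each $p_j$ a binary cubic in $(x,y)$, and symmetrically as a binary cubic in $(u,v)$ whose coefficients are binary cubics in $(x,y)$.

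\emph{First reduction (clearing a corner).} This is the exact analogue of the step ``$b_{22}\neq0\Rightarrow b_{22}=0$'' in the proof of Lemma~\ref{lemma:class_eq_w2}, with the quadratic replaced by a cubic. The boundary binary cubic $f(1,0;u,v)=a_{33}u^3+a_{32}u^2v+a_{31}uv^2+a_{30}v^3$, when not identically zero, has a root in $\mathbb{P}^1$; applying the element of the right-hand $\mathrm{SL}_2(K)$ that moves this root to $[0:1]$ forces the corner coefficient (that of $x^3v^3$) to vanish. Since the right factor commutes with all subsequent operations on the left pair of variables, and since an upper-triangular change $x\leftarrow x'-\alpha y$ preserves the coefficient of the top power $x^3$, this zero is stable under the remaining reductions. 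The degenerate case $f(1,0;u,v)\equiv0$ (and its transposes) is treated separately and folded in using the transposition and the quarter-turns.

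\emph{Iteration and normalization.} With a corner cleared I would continue as in the $W_2$ case: the remaining boundary rows and columns are binary forms of degree $\leq3$ in one pair of variables whose roots prescribe admissible shifts $x\leftarrow x'-\alpha y$ and $u\leftarrow u'-\beta v$ that clear further entries, after which renormalizations set the surviving pivot coefficients to $1$. The transposition and the quarter-turns are then used to identify normal forms related by the symmetries of the table, so as to keep a single representative per symmetry class; grouping the outcomes according to the pattern of boundary coefficients that cannot be made to vanish yields precisely the types tabulated in Figure~\ref{fig:equi_class_w3}.

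The hard part is the combinatorial control of the case tree. Compared with the $3\times3$ table of $W_2$, the $4\times4$ table together with the degree-$3$ boundary forms produces many more degenerate branches: the boundary cubics may have repeated or deficient root sets, the pivot coefficients used for renormalization may vanish, and distinct boundary forms may share roots, each configuration forcing a different residual pattern. Verifying that the enumeration is exhaustive and that every branch lands in one of the tabulated types is the delicate point; as with Theorem~\ref{thm:w2}, I expect to present the generic branch in the text and delegate the exhaustive bookkeeping of the degenerate branches to the companion computation in~\cite{Git}.
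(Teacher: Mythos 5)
Your strategy coincides with the paper's: the opening move --- use a root of the boundary cubic $f(1,0;u,v)$ to clear the corner coefficient and a quarter-turn to place the zero at $a_{33}$ --- is exactly how the paper's proof begins, and the remaining toolbox (right and downward shifts, renormalizations, the transposition) is the same. The gap is that you stop at the plan. The actual content of the lemma is the verification that every $f$ lands in one of the five tables of Figure~\ref{fig:equi_class_w3}, and that is precisely what you defer to an unexecuted ``combinatorial control of the case tree,'' proposing to push the degenerate branches to the companion computation. The paper carries this step out by hand, and it is short: once $a_{33}=0$, the branching is governed entirely by which of the two adjacent coefficients $a_{32}$ and $a_{23}$ vanish. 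If both vanish, one sub-branches on $a_{31}$ (Orbits I and II); if exactly one vanishes, a downward shift and a right shift give Orbit III or IV; if neither vanishes, a renormalization sets both equal to $1$ and one shift in each direction clears two more entries (Orbit V).

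The case explosion you anticipate --- repeated or shared roots of the boundary cubics, vanishing pivots --- does not occur, because the target tables are deliberately coarse: each requires only a handful of entries to vanish (compare the remark after Lemma~\ref{lemma:class_eq_w2}, which applies here as well), so each branch closes after at most one shift in each direction and a renormalization of coefficients already known to be nonzero. To complete your argument you need only write out this three-way split on $(a_{32},a_{23})$; unlike Theorem~\ref{thm:w2}, no delegation to~\cite{Git} is required for this lemma.
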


\begin{figure}[H]
    \centering    \tabcolsep=0.11cm
    \renewcommand{\arraystretch}{1.2}
    \begin{subfigure}[h]{0.33\textwidth}
    \begin{tabular}{|>{\centering}m{.6cm}|>{\centering}m{.6cm}|>{\centering}m{.6cm}|>{\centering}m{.6cm}|>{\centering\arraybackslash}m{.6cm}|}
    \cline{2-5}
    \multicolumn{1}{c|}{} 
    & $x^3$ & $x^2y$ & $xy^2$ & $y^3$\\
    \hline
    $u^3$ & & & \cellcolor{gray!40}  &  \cellcolor{gray!40}  \\
    \hline
    $u^2v$ &  & \cellcolor{gray!40} & \cellcolor{gray!40} & \cellcolor{gray!40} \\
    \hline
    $uv^2$ & & \cellcolor{gray!40} & \cellcolor{gray!40} & \cellcolor{gray!40} \\
    \hline
    $v^3$ & \cellcolor{gray!40} & \cellcolor{gray!40} & \cellcolor{gray!40} & \cellcolor{gray!40} \\
    \hline
\end{tabular}
\caption{Orbit I}
\label{tab:equiw31}
\end{subfigure}
\begin{subfigure}[h]{0.33\textwidth}
\begin{tabular}{|>{\centering}m{.6cm}|>{\centering}m{.6cm}|>{\centering}m{.6cm}|>{\centering}m{.6cm}|>{\centering\arraybackslash}m{.6cm}|}
    \cline{2-5}
    \multicolumn{1}{c|}{} 
    & $x^3$ & $x^2y$ & $xy^2$ & $y^3$\\
    \hline
    $u^3$ &&& \cellcolor{gray!40}  &  \cellcolor{gray!40} \\
    \hline
    $u^2v$ &  & \cellcolor{gray!40}  &\cellcolor{gray!40}  & \cellcolor{gray!40} \\
    \hline
    $uv^2$ & 1\cellcolor{gray!40} & &  \cellcolor{gray!40}& \cellcolor{gray!40} \\
    \hline
    $v^3$ & & \cellcolor{gray!40}  & \cellcolor{gray!40} &  \cellcolor{gray!40} \\
    \hline
\end{tabular}
\caption{Orbit II}
\label{tab:equiw32}
\end{subfigure}
\begin{subfigure}[h]{0.3\textwidth}
\begin{tabular}{|>{\centering}m{.6cm}|>{\centering}m{.6cm}|>{\centering}m{.6cm}|>{\centering}m{.6cm}|>{\centering\arraybackslash}m{.6cm}|}
    \cline{2-5}
    \multicolumn{1}{c|}{} 
    & $x^3$ & $x^2y$ & $xy^2$ & $y^3$\\
    \hline
    $u^3$ & & &\cellcolor{gray!40}  &  \cellcolor{gray!40}\\
    \hline
    $u^2v$ & 1\cellcolor{gray!40} &  &\cellcolor{gray!40}  & \cellcolor{gray!40} \\
    \hline
    $uv^2$ &  &\cellcolor{gray!40} &\cellcolor{gray!40} & \cellcolor{gray!40}\\
    \hline
    $v^3$ &\cellcolor{gray!40} & \cellcolor{gray!40}& \cellcolor{gray!40}& \cellcolor{gray!40} \\
    \hline
\end{tabular}
\caption{Orbit III}
\label{tab:equiw33}
\end{subfigure}
\bigskip

\begin{subfigure}[h]{0.33\textwidth}
    \begin{tabular}{|>{\centering}m{.6cm}|>{\centering}m{.6cm}|>{\centering}m{.6cm}|>{\centering}m{.6cm}|>{\centering\arraybackslash}m{.6cm}|}
    \cline{2-5}
    \multicolumn{1}{c|}{} 
    & $x^3$ & $x^2y$ & $xy^2$ & $y^3$\\
    \hline
    $u^3$ &  & 1\cellcolor{gray!40}  &  & \cellcolor{gray!40} \\
    \hline
    $u^2v$ & & & \cellcolor{gray!40}  &\cellcolor{gray!40}   \\
    \hline
    $uv^2$ & \cellcolor{gray!40}  & \cellcolor{gray!40} & \cellcolor{gray!40} & \cellcolor{gray!40} \\
    \hline
    $v^3$ &\cellcolor{gray!40}  & \cellcolor{gray!40} & \cellcolor{gray!40} & \cellcolor{gray!40}  \\
    \hline
\end{tabular}
\caption{Orbit IV}
\label{tab:equiw34}
\end{subfigure}
\begin{subfigure}[h]{0.33\textwidth}
\begin{tabular}{|>{\centering}m{.6cm}|>{\centering}m{.6cm}|>{\centering}m{.6cm}|>{\centering}m{.6cm}|>{\centering\arraybackslash}m{.6cm}|}
    \cline{2-5}
    \multicolumn{1}{c|}{} 
    & $x^3$ & $x^2y$ & $xy^2$ & $y^3$\\
    \hline
    $u^3$ &   & 1\cellcolor{gray!40} & & \cellcolor{gray!40} \\
    \hline
    $u^2v$ & 1\cellcolor{gray!40} & &\cellcolor{gray!40}& \cellcolor{gray!40}  \\
    \hline
    $uv^2$ & \cellcolor{gray!40} &\cellcolor{gray!40} &\cellcolor{gray!40} &\cellcolor{gray!40} \\
    \hline
    $v^3$ &\cellcolor{gray!40} &\cellcolor{gray!40} &\cellcolor{gray!40} & \cellcolor{gray!40} \\
    \hline
\end{tabular}
\caption{Orbit V}
\label{tab:equiw35}
\end{subfigure}
\caption{Orbits of $W_3$ under the action of $G$}
\label{fig:equi_class_w3}
\end{figure}

\begin{proof}
    The proof is very similar to that of Lemma~\ref{lemma:class_eq_w2}.

    First, we observe that we can always ensure that $a_{33}=0$. Either it is already 0, or we can make $a_{30}$ vanish using $a_{33}$, and then use the change of variables $u' = v, v' = -u$.

    \begin{itemize}
        \item[$-$] We assume that $a_{32}=a_{23}=0$.

            \begin{itemize}

                \item[$-$] If $a_{31}$ is 0, we get Table~\ref{tab:equiw31}.

                \item[$-$] Otherwise, with one change of variables downwards, one to the right and a renormalization, we transform the coefficients $a_{30}$ and $a_{21}$ into 0, and $a_{31}$ into 1.
                This case corresponds to Table~\ref{tab:equiw32}.
            \end{itemize}

        \item[$-$] If exactly one of $a_{32}$ and $a_{23}$ vanishes, and after
          a downwards change of variables and a change of variables to the
          right, the table representation of $f$ is either Table~\ref{tab:equiw33} or~\ref{tab:equiw34}.

        \item[$-$] If we assume that none of $a_{32}$ and $a_{23}$ vanish,
          then with a renormalization, we make $a_{23}$, $a_{32}$ equal to 1. A downwards change of variables and one to the right enable us to make two more coefficients vanish, and we get Table~\ref{tab:equiw35}.
    \end{itemize}
\end{proof}

These simplifications are not enough for the proof of Theorem~\ref{thm:hsop}, we need one last lemma.

\begin{lemma}[\protect{\cite[File \texttt{ProofHSOP.m}]{Git}}]\label{lemma:w3}
    Let $f\in W_3$. If $(f,f)_2 = 0$ and $(f,f)_3 = 0$, then $f\in \mathcal{N}_{W_3}^G$.
\end{lemma}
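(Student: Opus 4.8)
\noindent
The plan is to mirror the proof of Theorem~\ref{thm:w2}: reduce $f$ to a finite list of normal forms under the $G$-action, impose the two conditions, and check that every solution lands in $\mathcal{N}_{W_3}^G$. The key structural remark is that, since $k=l$ in both $(f,f)_2$ and $(f,f)_3$, these transvectants are $G$-equivariant; here $(f,f)_3$ has bidegree $(0,0)$, so it is a $G$-invariant scalar, while $(f,f)_2$ has bidegree $(2,2)$, so it is a $G$-covariant valued in $W_2$. Consequently the conditions $(f,f)_2=0$ and $(f,f)_3=0$ cut out a $G$-stable subvariety of $W_3$. By Lemma~\ref{lem:class_eq_w3}, every $f\in W_3$ is $G$-equivalent to one of the five normal forms of Figure~\ref{fig:equi_class_w3}, so it suffices to prove the implication for each of the Orbits I--V, taking the remaining grey cells as indeterminate coefficients.

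\noindent
For each of these five representatives I would substitute the normal form into the transvectant formulas, obtaining $(f,f)_3$ as a single quadratic polynomial in the free coefficients and $(f,f)_2$ as an element of $W_2$ whose coefficients are quadratic polynomials in the free coefficients. Collecting these into an ideal $I$ in the polynomial ring generated by the free coefficients, I would compute a radical (or primary) decomposition of $I$, exactly as in the proof of Theorem~\ref{thm:w2}. For each component of $V(I)$ the task is then to exhibit an explicit element of $G$ --- a change of variables to the right, a downwards change of variables, and a renormalization, all of which lie in $G$ --- carrying the corresponding form to one of the two tables of Figure~\ref{fig:nullW3}. Membership in the nullcone is then certified directly by the Hilbert--Mumford criterion (Proposition~\ref{Hilbert-Mumford}) with a $1$-parameter subgroup of the shape of Equation~\eqref{eq:1param}: the Type~I table is destabilized by $(k,l)=(1,0)$ and the Type~II table by $(k,l)=(2,1)$, since in both cases every monomial $x^{3-q}y^q u^{3-s}v^s$ occurring in the form has positive weight $k(2q-3)+l(2s-3)$.

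\noindent
The main obstacle is purely computational, and is the reason the verification is delegated to the Magma file~\cite{Git}: the generic normal form (Orbit~V, and already Orbit~I) still carries many indeterminate coefficients, so the polynomial system cut out by $(f,f)_2=0$ and $(f,f)_3=0$ is sizeable and its decomposition must be handled by Gr\"obner basis methods. Care is needed to ensure that the case analysis of Lemma~\ref{lem:class_eq_w3} is genuinely exhaustive, and that each component is matched to an \emph{actual} $G$-orbit of a nullcone table rather than merely shown to annihilate the two covariants. I also note that the reverse inclusion is immediate --- since $(f,f)_2$ and $(f,f)_3$ are homogeneous invariants of positive degree, they vanish on all of $\mathcal{N}_{W_3}^G$ --- so together with this lemma one in fact obtains the equality $\mathcal{N}_{W_3}^G = V\big((f,f)_2,(f,f)_3\big)$, the analogue for $W_3$ of Theorem~\ref{thm:w2}.
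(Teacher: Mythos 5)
Your overall strategy---reduce to the normal forms of Figure~\ref{fig:equi_class_w3}, form the ideal generated by the coefficients of $(f,f)_2$ and the scalar $(f,f)_3$, take a radical decomposition, and match each component to a nullcone table---is the same skeleton as the paper's proof. But you are missing the one genuinely new ingredient the paper needs: the authors state explicitly that for some of the normal forms the radical decomposition of the raw ideal is ``hard to interpret,'' and they remedy this by introducing the auxiliary covariant $J_{1,3}=(f,f)_{1,3}$, a binary quartic in $x,y$ (note it is only $H$-equivariant, since $k\neq l$). They check that the degree-$4$ and degree-$6$ invariants of $J_{1,3}$ lie in the radical of your ideal, so $J_{1,3}$ is a null binary quartic and has a triple root; after a further normalization placing that root at $[1:0]$ (compatible with the normal forms, since those only use changes of variables to the right), the vanishing of the $x^4$, $x^3y$, $x^2y^2$ coefficients of $J_{1,3}$ supplies \emph{extra} equations that are not consequences of the original ideal. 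It is the decomposition of this enlarged ideal that is actually carried out in \texttt{ProofHSOP.m}. Your plan as written would stall exactly where the authors say it does.

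Your closing remark is wrong, and the error is worth flagging because it reflects a misconception: $(f,f)_2$ is a \emph{covariant} with values in $W_2$, not an invariant, and covariants of positive degree need not vanish on the nullcone. Concretely, $f=xy^2u^2v$ is a Type~I nullform of Figure~\ref{fig:nullW3} (it is destabilized by the one-parameter subgroup with $(k,l)=(1,0)$), yet a direct computation gives $(f,f)_2=64\,y^2u^2\neq 0$. So the reverse inclusion fails and $\mathcal{N}_{W_3}^G\neq V\bigl((f,f)_2,(f,f)_3\bigr)$; the lemma is genuinely one-directional. This is also why the paper must separately prove Theorem~\ref{thm:hsop}, cutting out the nullcone by ten honest invariants rather than by these two covariants. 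The rest of your mechanics (the $G$-stability of the locus, the explicit destabilizing one-parameter subgroups for the two nullcone types) is correct.
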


\begin{proof}
    Let $f$ be a bicubic form with indeterminate coefficients $a_{ij}$. We
    assume that $(f,f)_2 = (f,f)_3 = 0$. In some cases, the radical decomposition
    of the ideal defined by these conditions can be used to show that $f$ belongs to the nullcone. 
    However, there are cases for which the radical decomposition is hard to interpret,
    which is why we introduce another covariant of $f$.

    \medskip
    Let $f$ be a generic bicubic form with indeterminate coefficients $a_{ij}$.
    Let $J_{1,3}=(f,f)_{1,3}$. This covariant is a binary form of degree four in $x,y$.
    Its fundamental invariants as a binary form of degree 4 are of degree $4$ and $6$ in the coefficients of $f$.
    A simple computation shows that these invariants are in the radical of the ideal defined by the coefficients of $(f,f)_2$ and $(f,f)_3$.
    Therefore, they must vanish.

    If follows that for any $f\in W_3$, $J_{1,3}$ belongs to the nullcone of binary quartics, so it has a triple root. We assume without loss of generality that $J_{1,3}$ is divisible by $y^3$.
    It means that its coefficients in $x^4, x^3y, x^2y^2$ vanish.
    \medskip

    We reason with a generic representative of each equivalence class of $W_3$ given in Figure~\ref{fig:equi_class_w3}.
    We can do that, since the only transformations on $x,y$ used in Lemma~\ref{lem:class_eq_w3} are to the right.
    The other conditions $(f,f)_2 = (f,f)_3 = 0$ clearly stay unchanged after these transformations.

    We now compute the radical decomposition of the ideal defined by $(f, f)_3$, the coefficients of $(f,f)_2$, and the coefficients of $x^4, x^3y, x^2y^2$ in $J_{1,3}$.

    For representation Tables~\ref{tab:equiw31}, \ref{tab:equiw32}, \ref{tab:equiw33}, \ref{tab:equiw34}, from the radical decomposition of $I$ it follows that $f\in\mathcal{N}_{W_3}^G$.

    Now let $f\in W_3$ be a generic representative of the orbit given by Table~\ref{tab:equiw35}.
    We compute the radical decomposition of the ideal defined by all the above conditions.
    Either we have $f\in\mathcal{N}_{W_3}^G$, or the change of variables $x\leftarrow x'+a_{31}/2y$, $u\leftarrow u'-a_{31}/2v$
    transforms $f$ in such a way that $f\in\mathcal{N}_{W_3}^G$.
\end{proof}

\begin{theorem}[\protect{\cite[File \texttt{ProofHSOP.m}]{Git}}]\label{thm:hsop}
    Let $I_2$, $I_{4,1}$, $I_{4,2}$, $I_{6,1}$, $I_{6,2}$, $I_{8,1}$, $I_{8,2}$, $I_{10}$, $I_{12}$, $I_{14}$
    be the invariants defined in Table~\ref{fig:tab_hsop}.
    Then
    \begin{displaymath}
      \mathcal{N}_{W_3}^G = V(I_2,\,I_{4,1},\,I_{4,2},\,I_{6,1},\,I_{6,2},\,I_{8,1},\,I_{8,2},\,I_{10},\,I_{12},\,I_{14}).
    \end{displaymath}
\end{theorem}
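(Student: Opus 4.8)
The plan is to prove the two inclusions separately. The inclusion $\mathcal{N}_{W_3}^G \subseteq V(I_2, I_{4,1}, \ldots, I_{14})$ is immediate: each of the ten invariants in Table~\ref{fig:tab_hsop} is built from transvectants, hence lies in $K[W_3]^G$, and each is homogeneous of positive degree (the degrees $2,4,4,6,6,8,8,10,12,14$ are recorded in the subscripts). By the very definition of the nullcone, every element of $K[W_3]^G_{>0}$ vanishes on $\mathcal{N}_{W_3}^G$, so in particular all ten $I_j$ do.

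For the reverse inclusion, let $f \in V(I_2, \ldots, I_{14})$; I want to show $f \in \mathcal{N}_{W_3}^G$. Since the nullcone is $G$-stable and each $I_j$ is a $G$-invariant, membership in $V(I_2,\ldots,I_{14})\cap\mathcal{N}_{W_3}^G$ is a $G$-invariant condition, so by Lemma~\ref{lem:class_eq_w3} I may assume $f$ is one of the five normal forms of Figure~\ref{fig:equi_class_w3}, with the undetermined entries treated as indeterminate parameters $a_{ij}$. For each of the five orbit representatives I would substitute into the ten invariants to obtain an ideal $I$ in the polynomial ring generated by these parameters, and then compute the radical decomposition of $I$ (this is the content of the Magma file \texttt{ProofHSOP.m}). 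The goal is to check, component by component, that every $f$ annihilating all ten invariants is $G$-equivalent to one of the two nullcone normal forms of Figure~\ref{fig:nullW3}.

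The decisive simplification comes from Lemma~\ref{lemma:w3}: rather than verifying directly the full defining system of $\mathcal{N}_{W_3}^G$ on each component, it suffices to check the two covariant conditions $(f,f)_2 = 0$ and $(f,f)_3 = 0$. Since $(f,f)_3 \in W_{0,0}$ is, up to a nonzero scalar, the degree-$2$ invariant $I_2$, the hypothesis already forces $(f,f)_3 = 0$; the remaining work is to show that, on each radical component arising from the five orbits, the biquadratic covariant $(f,f)_2 \in W_2$ vanishes identically. For the simpler orbits this follows at once from the radical decomposition, and for the generic orbit (Table~\ref{tab:equiw35}) one may need, exactly as at the end of the proof of Lemma~\ref{lemma:w3}, a final change of variables $x \leftarrow x' + \tfrac{a_{31}}{2} y$, $u \leftarrow u' - \tfrac{a_{31}}{2} v$ before applying the lemma.

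The main obstacle I expect is the generic orbit computation: it carries the most free parameters, so its ideal $I$ is the largest and its radical decomposition the hardest to interpret, possibly producing components that only become recognizable as nullcone forms after the auxiliary change of variables. Once all five cases are dispatched, the two inclusions give $V(I_2, \ldots, I_{14}) = \mathcal{N}_{W_3}^G$; since there are exactly $r = 10 = \dim K[W_3]^G$ of these invariants, Corollary~\ref{cor:hsopnullcone} then upgrades the statement to the fact that $I_2, \ldots, I_{14}$ is a homogeneous system of parameters, and by Remark~\ref{rem:samehsop} one for $K[W_3]^H$ as well, which is the use to which the theorem is put.
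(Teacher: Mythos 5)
Your logical skeleton matches the paper's: the forward inclusion is immediate, and for the reverse inclusion you correctly identify Lemma~\ref{lemma:w3} as the pivot, so that the whole task reduces to showing that the biquadratic covariant $h=(f,f)_2$ vanishes on $V(I_2,\ldots,I_{14})$ (the condition $(f,f)_3=0$ being $I_2=0$). You also correctly note the role of Corollary~\ref{cor:hsopnullcone} and Remark~\ref{rem:samehsop} afterwards. However, your plan for the computation itself omits the step that the paper presents as indispensable. You propose to form, for each of the five orbit representatives of Figure~\ref{fig:equi_class_w3}, the ideal generated by all ten invariants and to compute its radical decomposition directly. The paper does not do this: it first observes that $I_{4,1}$, $I_{6,1}$, $I_{8,1}$ are exactly invariants of $h$ as an element of $W_2$ whose simultaneous vanishing cuts out $\mathcal{N}_{W_2}^G$ (Theorem~\ref{thm:w2}), so that $h$ can be put in the triangular normal form of Table~\ref{tab:NullconeW2}. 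This hands you five \emph{linear} conditions on the coefficients of $h$ for free; the ideal actually decomposed in \texttt{ProofHSOP.m} is generated by those five coefficients together with only the seven remaining invariants. The authors state explicitly that ``for computational reasons, it seems that this detour is mandatory'': the invariants $I_{10}$, $I_{12}$, $I_{14}$ have degrees $10$--$14$ in up to a dozen indeterminates, and the radical decomposition you propose is almost certainly out of reach. Since the theorem is ultimately a computational assertion, a plan whose computation cannot be carried out is a genuine gap, not merely a stylistic difference.

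A second, related omission: once you normalize $h$ to its nullcone shape, you must check that this normalization is compatible with reducing $f$ to the representatives of Lemma~\ref{lem:class_eq_w3}. The paper handles this by observing that the reductions of Lemma~\ref{lem:class_eq_w3} use only rightward and downward changes of variables, \emph{except} for the initial step that kills $a_{33}$; the case $a_{33}\neq 0$ destroys the triangular form of $h$ and requires a separate argument with a new list of orbit representatives and an auxiliary radical-membership computation ($a_{30}b_{12}\in\mathrm{Rad}(I)$) before one can return to the main case. Your proposal, by skipping the $W_2$ detour, never meets this issue, but any feasible version of the proof will have to confront it. Your anticipated difficulty with Table~\ref{tab:equiw35} (an extra change of variables as at the end of Lemma~\ref{lemma:w3}) is not where the paper's proof actually strains; the real work is in the $a_{33}\neq 0$ branch.
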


\begin{proof}
    Let $f\in V(I_2$, $I_{4,1}$, $I_{4,2}$, $I_{6,1}$, $I_{6,2}$, $I_{8,1}$, $I_{8,2}$, $I_{10}$, $I_{12}$, $I_{14})$, and $h = (f,f)_2$.
    \medskip

    Since the invariants $I_{4,1}, I_{6,1}, I_{8,1}$ vanish when evaluated on $f$, we get that $h\in \mathcal{N}_{W_2}^G$.
    Hence we choose a basis such that the table representation of $h$ is:
    
    \tabcolsep=0.11cm
    \renewcommand{\arraystretch}{1.2}
    \begin{center}\begin{tabular}{|>{\centering}m{.6cm}|>{\centering}m{.6cm}|>{\centering}m{.6cm}|>{\centering\arraybackslash}m{.6cm}|}
        \cline{2-4}
        \multicolumn{1}{c|}{} 
        & $x^2$ & $xy$ & $y^2$\\
        \hline
        $u^2$ & &  & \cellcolor{gray!40}   \\
        \hline
        $uv$ & &  & \cellcolor{gray!40}\\
        \hline
        $v^2$ && \cellcolor{gray!40} & \cellcolor{gray!40}   \\
        \hline
        \end{tabular}.
    \end{center}

    In the following, we assume that $a_{33}=0$. We show afterwards that we can always reduce to that assumption.
    \medskip

    Since the change of variables used to transform $f$ into a representative given in Lemma~\ref{lem:class_eq_w3} are only to the right and downwards,
    we only need to prove the theorem for generic representatives given in Figure~\ref{fig:equi_class_w3}.

    Each case is covered in~\cite[File \texttt{ProofHSOP.m}]{Git}, but we detail the proof of the
    most generic case, which corresponds to Table~\ref{tab:equiw35}.

    Let $f = \sum_{0\leq i,j\leq 3}a_{ij}x^{3-i}y^iu^{3-j}v^j$, where $a_{33} = a_{22} = a_{13} = 0$ and $a_{23} = a_{32} = 1$, and the other $a_{ij}$ are indeterminates.

    Let $I$ be the ideal generated by the 5 vanishing coefficients of $h$, together with $I_2, I_{4,2}$, $I_{6,2}, I_{8,2}, I_{10}, I_{12}, I_{14}$. We denote by Rad$(I)$ its radical.

    A computation shows that every coefficient of a monomial in $h$ belongs to Rad$(I)$, thus we get $h = 0$.
    By Lemma~\ref{lemma:w3} it follows that $f\in\mathcal{N}_{W_3}^G$.
    \medskip

    Now let us deal with the case $a_{33}\neq 0$. With a downwards change of variables, we transform $a_{30}$ into 0, and then the change of variables $u\leftarrow v', v\leftarrow -u'$ transforms $a_{33}$ into $0$.
    However, the last change of variables transforms the representation table of $h$ into:

    \begin{center}
        \tabcolsep=0.11cm
        \renewcommand{\arraystretch}{1.2}
        \begin{tabular}{|>{\centering}m{.6cm}|>{\centering}m{.6cm}|>{\centering}m{.6cm}|>{\centering\arraybackslash}m{.6cm}|}
            \cline{2-4}
            \multicolumn{1}{c|}{} 
            & $x^2$ & $xy$ & $y^2$\\
            \hline
            $u^2$ & &\cellcolor{gray!40}  & \cellcolor{gray!40}  \\
            \hline
            $uv$ & & &\cellcolor{gray!40}  \\
            \hline
            $v^2$  & & &\cellcolor{gray!40}  \\
            \hline
        \end{tabular}\,.
    \end{center}

Let $b_{12}$ be the coefficient of $xyu^2$ in $h$.
We assume that $b_{12}$ is not 0, otherwise it reduces to the case we already dealt with.

A downwards change of variables modifies the 0 coefficients of $h$, which is why we try to reduce $f$ as much as possible, without
doing downwards transformations.
Hence, with the same idea of proof as Lemma~\ref{lem:class_eq_w3}, $f$ belongs is $G$-equivalent to an element of the following orbits:
\tabcolsep=0.09cm
\begin{center}
    \tabcolsep=0.11cm
    \renewcommand{\arraystretch}{1.2}
    \begin{tabular}{|>{\centering}m{.6cm}|>{\centering}m{.6cm}|>{\centering}m{.6cm}|>{\centering}m{.6cm}|>{\centering\arraybackslash}m{.6cm}|}
        \cline{2-5}
        \multicolumn{1}{c|}{}     & $x^3$ & $x^2y$ & $xy^2$ & $y^3$\\
        \hline
        $u^3$ & & & \cellcolor{gray!40}  &  \cellcolor{gray!40}  \\
        \hline
        $u^2v$ &  & \cellcolor{gray!40} & \cellcolor{gray!40} & \cellcolor{gray!40} \\
        \hline
        $uv^2$ & & \cellcolor{gray!40} & \cellcolor{gray!40} & \cellcolor{gray!40} \\
        \hline
        $v^3$ & \cellcolor{gray!40} & \cellcolor{gray!40} & \cellcolor{gray!40} & \cellcolor{gray!40} \\
        \hline
    \end{tabular}
~
    \begin{tabular}{|>{\centering}m{.6cm}|>{\centering}m{.6cm}|>{\centering}m{.6cm}|>{\centering}m{.6cm}|>{\centering\arraybackslash}m{.6cm}|}
        \cline{2-5}
        \multicolumn{1}{c|}{} 
        & $x^3$ & $x^2y$ & $xy^2$ & $y^3$\\
        \hline
        $u^3$ &&& \cellcolor{gray!40}  &  \cellcolor{gray!40} \\
        \hline
        $u^2v$ &  & \cellcolor{gray!40}  &\cellcolor{gray!40}  & \cellcolor{gray!40} \\
        \hline
        $uv^2$ & 1\cellcolor{gray!40} & &  \cellcolor{gray!40}& \cellcolor{gray!40} \\
        \hline
        $v^3$ & \cellcolor{gray!40}  & \cellcolor{gray!40}  & \cellcolor{gray!40} &  \cellcolor{gray!40} \\
        \hline
    \end{tabular}
~
    \begin{tabular}{|>{\centering}m{.6cm}|>{\centering}m{.6cm}|>{\centering}m{.6cm}|>{\centering}m{.6cm}|>{\centering\arraybackslash}m{.6cm}|}
        \cline{2-5}
        \multicolumn{1}{c|}{} 
        & $x^3$ & $x^2y$ & $xy^2$ & $y^3$\\
        \hline
        $u^3$ & & &\cellcolor{gray!40}  &  \cellcolor{gray!40}\\
        \hline
        $u^2v$ & 1\cellcolor{gray!40} &  &\cellcolor{gray!40}  & \cellcolor{gray!40} \\
        \hline
        $uv^2$ & \cellcolor{gray!40}  & &\cellcolor{gray!40} & \cellcolor{gray!40}\\
        \hline
        $v^3$ &\cellcolor{gray!40} & \cellcolor{gray!40}& \cellcolor{gray!40}& \cellcolor{gray!40} \\
        \hline
    \end{tabular}
\end{center}

\begin{center}
    \tabcolsep=0.11cm
    \renewcommand{\arraystretch}{1.2}
    \begin{tabular}{|>{\centering}m{.6cm}|>{\centering}m{.6cm}|>{\centering}m{.6cm}|>{\centering}m{.6cm}|>{\centering\arraybackslash}m{.6cm}|}
        \cline{2-5}
        \multicolumn{1}{c|}{} 
        & $x^3$ & $x^2y$ & $xy^2$ & $y^3$\\
        \hline
        $u^3$ & & &\cellcolor{gray!40}  &  \cellcolor{gray!40}\\
        \hline
        $u^2v$ & 1\cellcolor{gray!40} &  &\cellcolor{gray!40}  & \cellcolor{gray!40} \\
        \hline
        $uv^2$ & \cellcolor{gray!40}  &1\cellcolor{gray!40} &\cellcolor{gray!40} & \cellcolor{gray!40}\\
        \hline
        $v^3$ &\cellcolor{gray!40} & \cellcolor{gray!40}& \cellcolor{gray!40}& \cellcolor{gray!40} \\
        \hline
    \end{tabular}
    ~
    \begin{tabular}{|>{\centering}m{.6cm}|>{\centering}m{.6cm}|>{\centering}m{.6cm}|>{\centering}m{.6cm}|>{\centering\arraybackslash}m{.6cm}|}
        \cline{2-5}
        \multicolumn{1}{c|}{} 
        & $x^3$ & $x^2y$ & $xy^2$ & $y^3$\\
        \hline
        $u^3$ &  & 1\cellcolor{gray!40}  &  & \cellcolor{gray!40} \\
        \hline
        $u^2v$ & &\cellcolor{gray!40} & \cellcolor{gray!40}  &\cellcolor{gray!40}   \\
        \hline
        $uv^2$ & 1\cellcolor{gray!40}  & \cellcolor{gray!40} & \cellcolor{gray!40} & \cellcolor{gray!40} \\
        \hline
        $v^3$ &\cellcolor{gray!40}  & \cellcolor{gray!40} & \cellcolor{gray!40} & \cellcolor{gray!40}  \\
        \hline
    \end{tabular}
    ~
    \begin{tabular}{|>{\centering}m{.6cm}|>{\centering}m{.6cm}|>{\centering}m{.6cm}|>{\centering}m{.6cm}|>{\centering\arraybackslash}m{.6cm}|}
        \cline{2-5}
        \multicolumn{1}{c|}{} 
        & $x^3$ & $x^2y$ & $xy^2$ & $y^3$\\
        \hline
        $u^3$ &  & 1\cellcolor{gray!40}  &  & \cellcolor{gray!40} \\
        \hline
        $u^2v$ & &\cellcolor{gray!40} & \cellcolor{gray!40}  &\cellcolor{gray!40}   \\
        \hline
        $uv^2$ & 1\cellcolor{gray!40}  & \cellcolor{gray!40} & \cellcolor{gray!40} & \cellcolor{gray!40} \\
        \hline
        $v^3$ &\cellcolor{gray!40}  & \cellcolor{gray!40} & \cellcolor{gray!40} & \cellcolor{gray!40}  \\
        \hline
    \end{tabular}
\end{center}

\begin{center}
    \tabcolsep=0.11cm
    \renewcommand{\arraystretch}{1.2}
    \begin{tabular}{|>{\centering}m{.6cm}|>{\centering}m{.6cm}|>{\centering}m{.6cm}|>{\centering}m{.6cm}|>{\centering\arraybackslash}m{.6cm}|}
        \cline{2-5}
        \multicolumn{1}{c|}{} 
        & $x^3$ & $x^2y$ & $xy^2$ & $y^3$\\
        \hline
        $u^3$ &   & 1\cellcolor{gray!40} & \cellcolor{gray!40} & \cellcolor{gray!40} \\
        \hline
        $u^2v$ & 1\cellcolor{gray!40} & &\cellcolor{gray!40}& \cellcolor{gray!40}  \\
        \hline
        $uv^2$ & \cellcolor{gray!40} &\cellcolor{gray!40} &\cellcolor{gray!40} &\cellcolor{gray!40} \\
        \hline
        $v^3$ &\cellcolor{gray!40} &\cellcolor{gray!40} &\cellcolor{gray!40} & \cellcolor{gray!40} \\
        \hline
    \end{tabular}\,.
\end{center}

All cases are covered in
\cite[File \texttt{ProofHSOP.m}]{Git}. 
We only detail the proof for the most generic representative, which corresponds to the last representation table above.

Let $I$ be the ideal generated by the 5 vanishing coefficients of $h$, together with $I_2$, $I_{4,2}$, $I_{6,2}$, $I_{8,2}$, $I_{10}$, $I_{12}$, $I_{14}$. We denote by Rad$(I)$ its radical.

A computation shows that $a_{30}b_{12}\in \mathrm{Rad}(I)$. However, we assumed that $b_{12} \neq 0$, hence we must have $a_{30}=0$.

With the change of variables $u=v', v=-u'$, we reduced to the case where $a_{33}$ vanishes and $h$ is of the form
\begin{center}\begin{tabular}{|>{\centering}m{.6cm}|>{\centering}m{.6cm}|>{\centering}m{.6cm}|>{\centering\arraybackslash}m{.6cm}|}
    \cline{2-4}
    \multicolumn{1}{c|}{} 
    & $x^2$ & $xy$ & $y^2$\\
    \hline
    $u^2$ & &  & \cellcolor{gray!40}   \\
    \hline
    $uv$ & &  & \cellcolor{gray!40}\\
    \hline
    $v^2$ && \cellcolor{gray!40} & \cellcolor{gray!40}   \\
    \hline
    \end{tabular}\,.
\end{center}
Hence we can always assume that $a_{33} = 0$
\end{proof}

\subsection{A generating set of invariants for $K[W_3]^G$}

Several of the following results can be extended to $W_n$ for a general $n$, but for simplicity we only state them for $n=3$.

\begin{prop}\label{prop:w3macaulay}
    The algebras $K[W_3]^G$ and $K[W_3]^{H}$ are Cohen-Macaulay.
\end{prop}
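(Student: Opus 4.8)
The plan is to invoke Theorem~\ref{thm:hochster} (Hochster--Roberts), which guarantees that $K[V]^\Gamma$ is Cohen-Macaulay whenever $\Gamma$ is linearly reductive and $V$ is a rational representation of $\Gamma$. Since $W_3$ is a rational representation of both $H$ and $G$ (as established in the introductory setup), it suffices to prove that $H$ and $G$ are linearly reductive groups. The result then follows immediately for both algebras.

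First I would address $H = \mathrm{SL}_2(K)\times\mathrm{SL}_2(K)$. In characteristic $0$, every reductive group is linearly reductive, and $\mathrm{SL}_2(K)$ is a standard example of a linearly reductive group. More directly, one can argue that a product of linearly reductive groups is linearly reductive: given a rational representation $V$ of $H$, one first decomposes $V$ into isotypic components under the first $\mathrm{SL}_2(K)$ factor, then decomposes each of these under the second factor, using that the two factors commute. Since each $\mathrm{SL}_2(K)$ is linearly reductive in characteristic $0$, this yields a decomposition of $V$ into irreducible $H$-subrepresentations, proving $H$ is linearly reductive.

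Next I would handle $G = H\rtimes\mathbb{Z}/2\mathbb{Z}$. The key point is that $G$ contains the linearly reductive group $H$ as a normal subgroup of finite index $2$, and $\mathbb{Z}/2\mathbb{Z}$ is itself linearly reductive in characteristic $0$ (Maschke's theorem, since $\mathrm{char}(K)=0$ does not divide $2$). An extension of a linearly reductive group by a finite group whose order is invertible in $K$ remains linearly reductive: given a rational representation $V$ of $G$, it is in particular a rational representation of $H$, so it decomposes into $H$-isotypic components; the finite group $\mathbb{Z}/2\mathbb{Z}$ permutes these components, and one averages the $H$-equivariant projectors over $\mathbb{Z}/2\mathbb{Z}$ (dividing by $2$, which is legitimate here) to obtain $G$-equivariant projectors onto $G$-subrepresentations, yielding complete reducibility for $G$. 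Alternatively, one may simply cite the general fact from~\cite[Section 2]{kemper} that such a group is linearly reductive.

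The main obstacle is essentially bookkeeping rather than genuine difficulty: the content is entirely standard once the Hochster--Roberts theorem is available, so the work lies in correctly packaging the reductivity arguments and deciding how much to prove versus cite. The only subtle point worth stating carefully is that linear reductivity is stable under both taking finite products and forming semidirect extensions by finite groups of order prime to the characteristic; both stability properties are well documented in~\cite{kemper}, so I would lean on those references and keep the proof short, applying Theorem~\ref{thm:hochster} to conclude.
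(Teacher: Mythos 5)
Your proposal is correct and follows essentially the same route as the paper: establish that $H$ and $G$ are linearly reductive (via reductivity of $\mathrm{SL}_2(K)$ in characteristic $0$, stability under products, and stability under the finite extension by $\mathbb{Z}/2\mathbb{Z}$) and then apply the Hochster--Roberts theorem. The paper simply cites Procesi for the product and semidirect-product stability facts where you sketch the averaging argument, but the substance is identical.
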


\begin{proof}
    We prove that $H$ and $G$ are linearly reductive.
    First, we know that $\mathrm{SL}_2(K)$ is reductive~\cite[
    Section~7.3.2]{procesi}.
    Furthermore, the (semi-direct) product of two reductive groups is a reductive group~\cite[Section~7.3.6, Theorem~1]{procesi}.
    Thus, since the characteristic of $K$ is 0, $H$ is linearly reductive~\cite[Section~7.3.6, Theorem~2]{procesi}.

    The corollary of~\cite[Section 7.3.6, Proposition 2]{procesi} guarantees that $G$ is linearly reductive.
    It is clear that $W_3$ is a rational representation of both $H$ and $G$.
    
    Thus, from Hochster-Roberts theorem~\cite{hochster}, it follows that $K[W_3]^G$ and $K[W_3]^H$ are Cohen-Macaulay.
\end{proof}

%

\begin{lemma}[\protect{\cite[Page 38]{sturmfels}}]\label{lemma:inv_sec}
    Let $\mu_{1}, \ldots, \mu_{10}$ be any homogeneous system of parameters of $K[W_3]^G$.
    Let $\eta_1,\ldots, \eta_N\in K[W_3]^H$. Then $\eta_1, \ldots, \eta_N$ is a set of secondary invariants with respect to $\mu_1, \ldots, \mu_{10}$ if and only if
    $\eta_1, \ldots, \eta_N$ forms a basis of the $K$-vector space $K[W_3]^H/(\mu_1, \ldots, \mu_{10})$.
\end{lemma}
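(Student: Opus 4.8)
The plan is to regard $A := K[W_3]^H$ as a module over the polynomial subalgebra $R := K[\mu_1,\ldots,\mu_{10}]$ and to reduce everything to the freeness of $A$ over $R$. The key preliminary fact is that $A$ is a \emph{free} $R$-module: by Remark~\ref{rem:samehsop}, $\mu_1,\ldots,\mu_{10}$ is a homogeneous system of parameters for $A$; by Proposition~\ref{prop:w3macaulay}, $A$ is Cohen-Macaulay; and the remark following Proposition~\ref{prop:macaulay} then yields that $A$ is free over $R$, necessarily of rank $\dim_K A/R_+A$, where $R_+$ denotes the irrelevant ideal of $R$. Throughout I use that the ideal $(\mu_1,\ldots,\mu_{10})$ of $A$ coincides with $R_+A$, so that $A/(\mu_1,\ldots,\mu_{10}) = A/R_+A$, and I take the definition that $\eta_1,\ldots,\eta_N$ is a set of secondary invariants exactly when $A = \bigoplus_{i=1}^N \eta_i R$ as an $R$-module.

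For the forward implication I would argue directly. Assuming $A = \bigoplus_{i=1}^N \eta_i R$, freeness gives $R_+A = \bigoplus_{i=1}^N \eta_i R_+$, and quotienting yields $A/R_+A = \bigoplus_{i=1}^N \eta_i(R/R_+) = \bigoplus_{i=1}^N K\bar\eta_i$, where $\bar\eta_i$ denotes the class of $\eta_i$. Hence the $\bar\eta_i$ form a $K$-basis of $A/R_+A$, as claimed.

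For the converse, suppose the homogeneous classes $\bar\eta_1,\ldots,\bar\eta_N$ are a $K$-basis of $A/R_+A$. First I would apply the graded Nakayama lemma: since $A$ is a finitely generated graded $R$-module whose quotient $A/R_+A$ is spanned by the $\bar\eta_i$, the elements $\eta_i$ generate $A$ over $R$, giving a graded surjection $\phi\colon R^N \to A$, $(r_i)_i \mapsto \sum_i \eta_i r_i$. It then remains to show $\phi$ is injective. Since $A$ is free, hence projective, the exact sequence $0 \to \ker\phi \to R^N \xrightarrow{\phi} A \to 0$ splits, so $R^N \cong A \oplus \ker\phi$; comparing ranks over the fraction field of the integral domain $R$ and using $\operatorname{rank}_R A = \dim_K A/R_+A = N$ forces $\operatorname{rank}_R \ker\phi = 0$. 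As $\ker\phi$ is a torsion-free submodule of $R^N$, it must vanish, so $\phi$ is an isomorphism and $A = \bigoplus_{i=1}^N \eta_i R$, i.e. the $\eta_i$ are secondary invariants.

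The only genuine obstacle is in the converse: upgrading ``the $\eta_i$ generate $A$'' to ``the $\eta_i$ freely generate $A$.'' This is precisely where Cohen-Macaulayness is indispensable, since it is the freeness of $A$ over $R$, of rank exactly $N$, that makes the rank count rule out a nonzero kernel. The graded Nakayama lemma and the splitting of a surjection onto a projective module are standard, so I anticipate no further difficulty once the freeness of $A$ over $R$ is in hand.
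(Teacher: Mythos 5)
Your proof is correct and complete. The paper itself supplies no argument for this lemma --- it only points to \cite[Theorem 2.3.1]{sturmfels} and remarks that the proof is ``rather elementary'' --- so there is no in-paper proof to diverge from; your chain of reasoning (freeness of $K[W_3]^H$ over $K[\mu_1,\ldots,\mu_{10}]$ via Cohen--Macaulayness together with Remark~\ref{rem:samehsop}, graded Nakayama to get generation, and a rank count over $\operatorname{Frac}(K[\mu_1,\ldots,\mu_{10}])$ to upgrade generation to freeness) is exactly the standard argument and correctly fills in the omitted details.
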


The proof, while not detailed in~\cite[Theorem 2.3.1]{sturmfels}, is rather elementary.

\begin{prop}\label{prop:inv_sec1}
    Let $\mu_1,\ldots,\mu_{10}$ be a homogeneous system of parameters for $K[W_3]^{G}$.
    There exist $\eta_1,\ldots, \eta_N$ homogeneous elements of $K[W_3]^H$ such that \[K[W_3]^H = \bigoplus_{j = 1}^N\eta_jK[\mu_1,\ldots,\mu_{10}]\,,\]
    with $\sigma(\eta_i) = \pm\eta_i$ for all $1\leq i\leq N$.
\end{prop}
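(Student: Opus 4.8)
The plan is to first produce \emph{some} secondary invariant decomposition of $K[W_3]^H$ over $K[\mu_1,\ldots,\mu_{10}]$ using Cohen--Macaulayness, and then to correct the chosen secondary invariants so that each one becomes an eigenvector of $\sigma$. First I would invoke Remark~\ref{rem:samehsop}, which guarantees that $\mu_1,\ldots,\mu_{10}$ is also a homogeneous system of parameters for $K[W_3]^H$. Since $K[W_3]^H$ is Cohen--Macaulay by Proposition~\ref{prop:w3macaulay}, the remark following Proposition~\ref{prop:macaulay} ensures that $K[W_3]^H$ is a free module over $K[\mu_1,\ldots,\mu_{10}]$. In particular the graded quotient $V:=K[W_3]^H/(\mu_1,\ldots,\mu_{10})$ is a finite-dimensional $K$-vector space, say $\dim_K V = N$, and by Lemma~\ref{lemma:inv_sec} a family of homogeneous elements of $K[W_3]^H$ is a set of secondary invariants with respect to $\mu_1,\ldots,\mu_{10}$ if and only if its image in $V$ is a $K$-basis.

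Next I would exploit the involution $\sigma$. The ideal $(\mu_1,\ldots,\mu_{10})$ is homogeneous and $\sigma$-stable: each $\mu_i\in K[W_3]^G$ is fixed by $\sigma$, so for $g_i\in K[W_3]^H$ one has $\sigma\big(\sum_i g_i\mu_i\big)=\sum_i\sigma(g_i)\mu_i\in(\mu_1,\ldots,\mu_{10})$. Moreover $\sigma$ only permutes the coordinates $a_{ij}$, hence it is a degree-preserving $K$-algebra automorphism of $K[W_3]^H$. Therefore $\sigma$ descends to a graded involution $\bar\sigma$ on $V$, restricting to each finite-dimensional homogeneous component $V_d$. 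Since $\bar\sigma^2=\mathrm{Id}$ and $\mathrm{char}(K)=0$, its minimal polynomial divides $(X-1)(X+1)$, so $\bar\sigma$ is diagonalizable with eigenvalues in $\{1,-1\}$. Working degree by degree, I may thus choose a homogeneous $K$-basis $\bar\eta_1,\ldots,\bar\eta_N$ of $V$ consisting of eigenvectors of $\bar\sigma$.

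It remains to lift these to genuine eigenvectors in $K[W_3]^H$. Lift each $\bar\eta_j$ to a homogeneous element $\eta_j'\in K[W_3]^H$ of the same degree, and set $\eta_j:=\tfrac{1}{2}\big(\eta_j'+\sigma(\eta_j')\big)$ when $\bar\sigma(\bar\eta_j)=\bar\eta_j$, and $\eta_j:=\tfrac{1}{2}\big(\eta_j'-\sigma(\eta_j')\big)$ when $\bar\sigma(\bar\eta_j)=-\bar\eta_j$. Then $\sigma(\eta_j)=\pm\eta_j$ by construction, each $\eta_j$ is homogeneous (as $\sigma$ preserves degree), and since $\bar\sigma(\bar\eta_j)=\pm\bar\eta_j$ the image of $\eta_j$ in $V$ is exactly $\bar\eta_j$. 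Hence $\eta_1,\ldots,\eta_N$ still projects onto a basis of $V$, so by Lemma~\ref{lemma:inv_sec} it is a set of secondary invariants, yielding the desired decomposition $K[W_3]^H=\bigoplus_{j=1}^N\eta_jK[\mu_1,\ldots,\mu_{10}]$ together with $\sigma(\eta_j)=\pm\eta_j$ for all $j$.

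None of these steps is computationally involved; the only point requiring genuine care is the passage from eigenvectors in the quotient $V$ to honest eigenvectors in $K[W_3]^H$. This is precisely where the symmetrization/antisymmetrization above is essential, and it works because $2$ is invertible in $K$ and because $\sigma$ fixes each $\mu_i$, so that the correction stays within the correct coset modulo $(\mu_1,\ldots,\mu_{10})$.
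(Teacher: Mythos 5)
Your proof is correct and follows essentially the same route as the paper: both reduce, via Lemma~\ref{lemma:inv_sec}, to producing a homogeneous basis of $V=K[W_3]^H/(\mu_1,\ldots,\mu_{10})$ consisting of $\pm 1$-eigenvectors of $\sigma$, using the characteristic-zero symmetrization $\eta\mapsto\tfrac{1}{2}(\eta\pm\sigma(\eta))$. Your treatment of the lift from eigenvectors of $\bar\sigma$ in the quotient to genuine eigenvectors in $K[W_3]^H$ is in fact slightly more careful than the paper's, which glosses over this step.
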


\begin{proof}
    By Lemma~\ref{lemma:inv_sec}, finding such a decomposition is equivalent to constructing a basis of the $K$-vector space $V := K[W_3]^H/(\mu_1, \ldots, \mu_{10})$
    with the properties needed. Since the number of secondary invariants is finite, $V$ is a finite-dimensional $K$-vector space.
    Let $e_1, \ldots, e_N$ be a homogeneous basis of $V$.
    Clearly the set $\{e_i+\sigma(e_i)\}_i\cup \{e_i-\sigma(e_i)\}_i$ generates $V$. 
    Let $\{\eta_j\}_{1\leq j \leq N'}$ be a subset of these elements which forms a basis of $V$.
    In addition, we have $\sigma(\eta_j) = \pm\eta_j$ for all $1\leq j\leq N'$.
\end{proof}

\begin{cor}
    With the same notation, we have \[K[W_3]^G = \bigoplus_{j\in J}\eta_jK[\mu_1,\ldots,\mu_{10}]\,,\]
    where $J = \{1\leq j \leq N'~\lvert~\sigma(\eta_j) = \eta_j\}$.
\end{cor}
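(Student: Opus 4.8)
The plan is to exploit the $\sigma$-eigenspace structure already built into the decomposition of Proposition~\ref{prop:inv_sec1}, together with the identification $K[W_3]^G = (K[W_3]^H)^{\mathbb{Z}/2\mathbb{Z}}$ and the fact that $K[W_3]^G$ is exactly the subalgebra of $K[W_3]^H$ fixed by $\sigma$. First I would record the two facts that make the argument work. By Proposition~\ref{prop:inv_sec1} we have the direct sum decomposition $K[W_3]^H = \bigoplus_{j=1}^{N'}\eta_j K[\mu_1,\ldots,\mu_{10}]$ with $\sigma(\eta_j) = \pm\eta_j$, so every $f\in K[W_3]^H$ can be written \emph{uniquely} as $f = \sum_{j=1}^{N'}\eta_j p_j$ with $p_j\in K[\mu_1,\ldots,\mu_{10}]$. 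Moreover, since the $\mu_i$ are $G$-invariant, they are fixed by $\sigma$; as $\sigma$ is a $K$-algebra homomorphism, it therefore fixes every polynomial $p_j$ in the $\mu_i$, that is $\sigma(p_j) = p_j$.

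For the inclusion $K[W_3]^G \subseteq \bigoplus_{j\in J}\eta_j K[\mu_1,\ldots,\mu_{10}]$, I would take $f\in K[W_3]^G$. Then $f\in K[W_3]^H$ and $\sigma(f) = f$, so writing $f = \sum_{j=1}^{N'}\eta_j p_j$ and applying $\sigma$ gives $f = \sigma(f) = \sum_{j=1}^{N'}\sigma(\eta_j)\sigma(p_j) = \sum_{j=1}^{N'}\varepsilon_j\,\eta_j p_j$, where $\varepsilon_j = +1$ when $j\in J$ and $\varepsilon_j = -1$ otherwise. Comparing this expansion with the original one and invoking the uniqueness of the decomposition forces $\eta_j p_j = -\eta_j p_j$ for every $j\notin J$; since $\mathrm{char}(K)=0$ and $\eta_j\neq 0$ in the free module, this yields $p_j = 0$ for all $j\notin J$. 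Thus $f = \sum_{j\in J}\eta_j p_j$, which proves the inclusion.

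The reverse inclusion and the directness of the sum are then immediate: for $j\in J$ both $\eta_j$ and every $p_j\in K[\mu_1,\ldots,\mu_{10}]$ are $\sigma$-fixed, so each summand $\eta_j p_j$ lies in $(K[W_3]^H)^{\mathbb{Z}/2\mathbb{Z}} = K[W_3]^G$, and the sum over $j\in J$ is direct because it is a sub-collection of the summands of the direct sum of Proposition~\ref{prop:inv_sec1}. I do not expect any genuine obstacle here; the only point requiring care is the uniqueness step, where one must be certain that the coefficients $p_j$ are genuinely determined by $f$. This is precisely the content of the symbol ``$\bigoplus$'' in Proposition~\ref{prop:inv_sec1}, which itself rests on the algebraic independence of $\mu_1,\ldots,\mu_{10}$ (they form a homogeneous system of parameters) and on the freeness of $K[W_3]^H$ as a $K[\mu_1,\ldots,\mu_{10}]$-module.
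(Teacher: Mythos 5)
Your proof is correct. It reaches the same conclusion as the paper but by a slightly different mechanism: the paper applies the Reynolds-type projection $\mathcal{R}(I)=\tfrac{1}{2}(I+\sigma(I))$ to the whole decomposition of Proposition~\ref{prop:inv_sec1}, using the module property $\mathcal{R}(I_1I_2)=I_1\mathcal{R}(I_2)$ for $I_1\in K[W_3]^G$ and the surjectivity of $\mathcal{R}$ to get $K[W_3]^G=\sum_j\mathcal{R}(\eta_j)K[\mu_1,\ldots,\mu_{10}]$ at once, with $\mathcal{R}(\eta_j)$ equal to $\eta_j$ or $0$ according to the sign of $\sigma(\eta_j)$; directness is then inherited exactly as you argue. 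You instead work element by element, expanding an arbitrary $\sigma$-fixed $f$ and comparing coefficients via the uniqueness built into the free-module decomposition. The two arguments rest on the same two facts (that $\sigma$ fixes $K[\mu_1,\ldots,\mu_{10}]$ and acts by $\pm 1$ on the $\eta_j$); the paper's version is shorter because the projection handles the spanning statement without invoking uniqueness, whereas yours is more self-contained and makes explicit where the freeness of the module is actually used. Both are complete proofs, and your identification of the uniqueness of the $p_j$ as the one delicate point is accurate.
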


\begin{proof}
    Let $\mathcal{R}:K[W_3]^H\rightarrow K[W_3]^G$ such that for any $I\in K[W_3]^H$, \[\mathcal{R}(I) = \frac{I+\sigma(I)}{2}\,.\]
    
    It is the linear projection of $K[W_3]^H$ onto $K[W_3]^G$. For any $I_1\in K[W_3]^G$, $I_2\in K[W_3]^H$, we have $\mathcal{R}(I_1I_2)=I_1\mathcal{R}(I_2)$~\cite[Proposition 2.2.7]{kemper}.
    Hence, since $\mathcal{R}$ is surjective (it is a linear projection), and $\mu_1,\ldots, \mu_r$ are elements of $K[W_n]^G$, we have \[K[W_n]^{G} = \sum_{j = 1}^{N'}\mathcal{R}\left(\eta_j\right)K[\mu_1,\ldots,\mu_r].\]

    We naturally get \[K[W_n]^G = \sum_{j\in J}\eta_jK[\mu_1,\ldots,\mu_r]\,,\]
    where $J = \{1\leq j \leq N'~\lvert~\sigma(\eta_j) = \eta_j\}$.
    Moreover, this sum is direct, since \[K[W_n]^H = \bigoplus_{j = 1}^{N'}\eta_jK[\mu_1,\ldots,\mu_r]\,.\]
\end{proof}

\begin{cor}[\protect{\Cite[File \texttt{SecondaryInvariants.m}]{Git}}]\label{cor:inv_sec2}
    The Hilbert series of $K[W_3]^G$ is
    \begin{align}\label{eq:hilb2}
        \frac{P_2(t)}{(1-t^2)(1-t^4)^2(1-t^6)^2(1-t^8)^2(1-t^{10})(1-t^{12})(1-t^{14})}\,,
    \end{align}
    with
    \begin{align*}
        P_2(t) =~ & t^{52}+t^{50}+3t^{48}+6t^{46}+12t^{44}+23t^{42}+36t^{40}+51t^{38}+68t^{36}+84t^{34}+\\
        & 94t^{32}+99t^{30}+96t^{28}+87t^{26}+75t^{24}+61t^{22}+45t^{20}+33t^{18}+23t^{16}+\\
        & 14t^{14}+10t^{12}+6t^{10}+2t^8+t^6+1\,.
    \end{align*}
\end{cor}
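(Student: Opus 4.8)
The plan is to combine the secondary-invariant decomposition established just above with a computation of the trace of $\sigma$ on each graded piece of $K[W_3]^H$. By Proposition~\ref{prop:inv_sec1} and the corollary following it, a homogeneous system of parameters $\mu_1,\dots,\mu_{10}$ for $K[W_3]^G$ (whose degrees are $2,4,4,6,6,8,8,10,12,14$ by Theorem~\ref{thm:hsop} and Corollary~\ref{cor:hsopnullcone}) gives
\[K[W_3]^H=\bigoplus_{j=1}^{N'}\eta_jK[\mu_1,\dots,\mu_{10}],\qquad K[W_3]^G=\bigoplus_{j\in J}\eta_jK[\mu_1,\dots,\mu_{10}],\]
with $\sigma(\eta_j)=\varepsilon_j\eta_j$, $\varepsilon_j\in\{\pm1\}$, and $J=\{j:\varepsilon_j=1\}$. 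Writing $D(t)=(1-t^2)(1-t^4)^2(1-t^6)^2(1-t^8)^2(1-t^{10})(1-t^{12})(1-t^{14})$, Proposition~\ref{prop:macaulay} turns the claim into the identity $\sum_{j\in J}t^{\deg\eta_j}=P_2(t)$, while Equation~\eqref{eq:hilb1} already tells us that $\sum_{j=1}^{N'}t^{\deg\eta_j}=P_1(t)$. So it remains only to split the secondary invariants according to the sign $\varepsilon_j$.

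To perform this split I would compute the $\sigma$-equivariant Hilbert series
\[\Phi(t):=\sum_{d\ge0}\mathrm{tr}\!\left(\sigma\big|_{(K[W_3]^H)_d}\right)t^d.\]
Since each $\mu_i$ lies in $K[W_3]^G$, the involution $\sigma$ fixes $K[\mu_1,\dots,\mu_{10}]$ pointwise and acts by $\varepsilon_j$ on the summand $\eta_jK[\mu_1,\dots,\mu_{10}]$, whose Hilbert series is $t^{\deg\eta_j}/D(t)$; hence $\Phi(t)=\big(\sum_{j\in J}t^{\deg\eta_j}-\sum_{j\notin J}t^{\deg\eta_j}\big)/D(t)$. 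Splitting each graded piece into its $\pm1$-eigenspaces gives $\dim(K[W_3]^G)_d=\tfrac12\big(\dim(K[W_3]^H)_d+\mathrm{tr}(\sigma|_d)\big)$, whence
\[HS(K[W_3]^G,t)=\tfrac12\left(HS(K[W_3]^H,t)+\Phi(t)\right).\]
The statement thus reduces to computing $\Phi(t)$ and verifying that $\tfrac12\big(P_1(t)+D(t)\Phi(t)\big)=P_2(t)$.

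The computation of $\Phi(t)$ is the heart of the argument. Projecting onto $H$-invariants by integration over a compact form of $H$ rewrites it as the twisted Molien integral $\Phi(t)=\int_H \det(1-t\,\rho(\sigma g))^{-1}\,dg$, where $\rho$ is the action on $W_3$. A direct conjugation computation in $G$ (of the same flavour as in Corollary~\ref{cor:1param}) shows that every semisimple element of the coset $H\sigma$ is $H$-conjugate to the folded torus element $\gamma_s=(\mathrm{diag}(s,s^{-1}),\mathrm{Id})\,\sigma$; acting on $U_1\oplus U_2$ it has block matrix $\left(\begin{smallmatrix}0&\mathrm{diag}(s,s^{-1})\\ \mathrm{Id}&0\end{smallmatrix}\right)$, so on $W_3\cong\mathrm{Sym}^3\otimes\mathrm{Sym}^3$ its eigenvalues are the four numbers $a\in\{s^3,s,s^{-1},s^{-3}\}$ coming from the fixed vectors $e_i\otimes e_i$ of the swap, together with the pairs $\pm\sqrt{a_ia_j}$ for $i<j$. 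Consequently
\[\det(1-t\,\rho(\gamma_s))=\prod_{a}(1-ta)\prod_{i<j}(1-t^2a_ia_j),\]
and I would recover $\Phi(t)$ from this rational function by the residue method already used to prove Equation~\eqref{eq:hilb1}, now in the single variable $s$ with the Weyl factor of the rank-one folded torus; the substitution $s=w^2$ clears the square roots and makes the residue extraction routine.

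I expect the main obstacle to be this third step: fixing the correct Weyl measure for the twisted coset $H\sigma$ and carrying out the residue computation for $\Phi(t)$ without error, since the square-root eigenvalues $\pm\sqrt{a_ia_j}$ must be tracked carefully through the sign conventions. Once $\Phi(t)$ is in hand, combining it with $HS(K[W_3]^H,t)$ and simplifying the resulting rational function to $P_2(t)/D(t)$ is a mechanical verification, performed in the file \texttt{SecondaryInvariants.m}; equivalently, one may bypass $\Phi$ altogether by listing the secondary invariants $\eta_j$ explicitly, reading off the signs $\varepsilon_j$, and summing $t^{\deg\eta_j}$ over $j\in J$.
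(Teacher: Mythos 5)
Your argument is correct, but its main route differs from the paper's. The paper's proof of Corollary~\ref{cor:inv_sec2} is exactly the alternative you relegate to your closing sentence: it explicitly constructs a full set of secondary invariants $\eta_j$ degree by degree (testing linear independence by evaluation on sample biforms rather than symbolically), stopping when the count in each degree matches the numerator $P_1(t)$ forced by Equation~\eqref{eq:hilb1} and the h.s.o.p.\ degrees, and then simply records which $\eta_j$ satisfy $\sigma(\eta_j)=\eta_j$; summing $t^{\deg\eta_j}$ over those gives $P_2(t)$. Your primary route --- the twisted Molien integral $\Phi(t)=\int_H\det(1-t\,\rho(\sigma g))^{-1}dg$ over the non-identity coset, reduced to the folded torus $\gamma_s$ --- is sound: the identity $HS(K[W_3]^G,t)=\tfrac12(HS(K[W_3]^H,t)+\Phi(t))$ follows from averaging the involution on each graded piece, your conjugacy claim for semisimple elements of $H\sigma$ and the eigenvalue list $\{a_i\}\cup\{\pm\sqrt{a_ia_j}\}$ on $\mathrm{Sym}^3\otimes\mathrm{Sym}^3$ are correct, and the remaining work is the twisted Weyl integration measure, which you rightly flag as the delicate point. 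The trade-off is this: your approach computes the Hilbert series of the invariant ring of the \emph{disconnected} group purely by character theory, extending the residue method of Equation~\eqref{eq:hilb1} (which the paper explicitly declines to do, saying the series for $K[W_3]^G$ is ``computed by different means''), and it does not depend on having found an h.s.o.p.\ or any secondary invariants; the paper's approach avoids the twisted Weyl measure entirely and, since the explicit $\sigma$-eigenbasis of secondary invariants is needed anyway for Theorem~\ref{thm:secondary-invariants} and Table~\ref{fig:tab_inv}, costs nothing extra. Ideally your $\Phi(t)$ computation would serve as an independent consistency check on the paper's construction.
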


\begin{proof}
    We exhibit a set of secondary invariants for $K[W_3]^H$ which satisfy the condition of Proposition~\ref{prop:inv_sec1}.
    We generate invariants degree by degree, until the dimension of the spanned vector space is equal to the dimension given by the Hilbert series given in Equation~\eqref{eq:hilb1}.
    Instead of working with the generic expression of the invariants themselves, we evaluate them on biforms, which is much more efficient.
    Our strategy is similar to that of~\Cite[Section 5.3]{olive-lerc}.
    \medskip

    Once this stage is complete, we can check which secondary invariants are fixed by $\sigma$, and which are not. Hence we get Equation~\eqref{eq:hilb2}.
\end{proof}

\begin{theorem}[\protect{\cite[File \texttt{SecondaryInvariants.m}]{Git}}]
  \label{thm:secondary-invariants}
    Let $K$ be an algebraically closed field of characteristic 0. Let $G := \mathrm{SL}_2(K)\times \mathrm{SL}_2(K)\rtimes \mathbb{Z}/2\mathbb{Z}$, and $W_3$ be the space of biforms
    of bidegree $(3,3)$. The algebra $K[W_3]^{G}$ is generated by $65$ elements.
\end{theorem}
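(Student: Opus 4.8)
The plan is to leverage the Cohen--Macaulay structure of $A := K[W_3]^G$ together with the Hilbert series of Equation~\eqref{eq:hilb2} to pin down a degree bound on the generators, and then to extract a minimal generating set degree by degree. First I would invoke Proposition~\ref{prop:w3macaulay}, so that $A$ is Cohen--Macaulay, and fix the homogeneous system of parameters $\mu_1,\ldots,\mu_{10}$ of degrees $2,4,4,6,6,8,8,10,12,14$ supplied by Theorem~\ref{thm:hsop}. By the remark following Proposition~\ref{prop:macaulay}, $A$ is then a free $K[\mu_1,\ldots,\mu_{10}]$-module whose secondary generators have degrees equal to the exponents appearing in the numerator $P_2(t)$ of Equation~\eqref{eq:hilb2}. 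The decisive consequence is that any generator of $A$ not already lying in $K[\mu_1,\ldots,\mu_{10}]$ has degree at most $\max_i e_i = 52$, which confines the entire search to degrees $1 \le d \le 52$.

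Next I would reformulate the problem through indecomposables. Writing $A_+ = \bigoplus_{d>0} A_d$, the minimal number of homogeneous generators of $A$ in degree $d$ equals $\dim_K (A_+/A_+^2)_d$; equivalently, if $D_d \subseteq A_d$ denotes the span of all products of invariants of strictly smaller degree, then the number of genuinely new generators needed in degree $d$ is $\dim_K A_d - \dim_K D_d$. Crucially, every dimension $\dim_K A_d$ is already known: it is the coefficient of $t^d$ in the series~\eqref{eq:hilb2}. The whole theorem thus reduces to computing, for each $d$ up to $52$, the dimension of the decomposable subspace $D_d$, and checking that the new generators sum to $65$.

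The computational core runs through $d=1,\ldots,52$ and at each step (i) produces enough invariants of degree $d$ to span $A_d$, and (ii) determines $\dim_K D_d$. For (i) I would generate $G$-invariants as iterated balanced transvectants $(\,\cdot\,,\,\cdot\,)_k$ of $f$ with itself and with previously built $G$-covariants, retaining those of bidegree $(0,0)$; equivariance guarantees these lie in $A$, and unbalanced transvectants may be added and then projected into $A$ via the Reynolds operator $\mathcal{R}(I)=\tfrac12(I+\sigma(I))$. For (ii) I would form all products of already-found generators of total degree $d$ and compute the rank of the resulting family. To keep these high-degree linear-algebra problems tractable in the $16$ coefficients $a_{ij}$, I would not work symbolically but evaluate every invariant on a batch of biforms and compute ranks of the resulting numerical matrices, exactly as in the proof of Corollary~\ref{cor:inv_sec2} and following~\cite[Section 5.3]{olive-lerc}. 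Summing the per-degree counts of indecomposables then yields $65$.

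The main obstacle is the feasibility of the degree-$52$ computations: both $A_d$ and the product space $D_d$ grow rapidly, and the delicate point is certifying that the transvectants produced in a given degree genuinely span all of $A_d$ rather than a proper subspace. Here the Hilbert series is the safeguard, since $\dim_K A_d$ is known exactly: one keeps forming transvectants until the span attains the predicted dimension, which certifies completeness in that degree. The remaining care is purely in the bookkeeping—choosing evaluation points generic enough that the numerical ranks coincide with the true ranks—so that the final tally of indecomposable generators is provably equal to $65$.
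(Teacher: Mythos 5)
Your proposal follows essentially the same route as the paper: the paper's (very terse) proof likewise combines the Cohen--Macaulay structure and the homogeneous system of parameters of Theorem~\ref{thm:hsop} with the Hilbert series of Equation~\eqref{eq:hilb2} to bound the degrees of generators, and then generates invariants degree by degree via transvectants, evaluated numerically on biforms, until the spanned dimensions match the Hilbert series coefficients. The only difference is one of detail: your write-up makes explicit the degree bound $52$ and the indecomposability count $\dim_K(A_+/A_+^2)_d$, which the paper leaves implicit and delegates to the file \texttt{SecondaryInvariants.m}; the resulting generators in fact all occur in degree at most $18$, as recorded in Table~\ref{tab:deg_inv}.
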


This theorem follows from Corollary~\ref{cor:inv_sec2}: once the Hilbert series is known, we generate invariants degree by degree 
to find the dimension given by the Hilbert series.
A set of generating invariants is explicited in Table~\ref{fig:tab_inv}. 
Table~\ref{tab:deg_inv} shows the number of invariants of a given degree in the set of generating invariants.

\begin{figure}[h]
    \tabcolsep=0.15cm
    \renewcommand{\arraystretch}{1.2}
    \begin{tabular}{|c|c|c|c|c|c|c|c|c|c|}
        \hline
        Degree & 2 & 4 & 6 & 8 & 10 & 12 & 14 & 16 & 18\\
        \hline
        Number of fundamental invariants & 1 & 2 & 3 & 4 & 7 & 10 & 13 & 14 & 11\\
        \hline
    \end{tabular}
    \caption{Number of invariants of $K[W_3]^G$ by degree}
    \label{tab:deg_inv}
\end{figure}


\begin{rem}
    So far, we have only worked over algebraically closed fields. 
    In practice, however, one does not work with such fields. Let $K$ be field of characteristic 0 (not necessarily algebraically closed).
    
    Let $Q,E\in K[X,Y,Z,T]$ be an irreducible quadratic and cubic form, with $Q$ of rank $4$.
    There is an element $M(\Delta_1,\Delta_2)$ of $\mathrm{GL}_4(L)$ which maps $Q$ to $XT-YZ$, where $L=K(\Delta_1,\Delta_2)$ is (at most) a biquadratic
    extension of $K$. One can show that the invariants of the curve defined by $Q$ and $E$ belong to $K$, by considering the conjugate transformations $M(\pm \Delta_1, \pm\Delta_2)$.
\end{rem}

\subsection{Discriminant of a genus 4 curve of rank 4}

Once a model is chosen, the discriminant of a curve is a natural algebraic invariant. In the case of non-hyperelliptic genus 4 curves of rank 4,
one can compute the discriminant of a bicubic form of bidegree $(3,3)$ using~\cite{buse}.
It is an invariant of degree 34 in the coefficients of the bicubic form, hence not a fundamental invariant.

We use a strategy of evaluation interpolation to find the decomposition of the discriminant on the 65 invariants.
That decomposition can be found on~\cite[File \texttt{Discriminant.m}]{Git}.

\section{Case of rank $3$}\label{sec:rank3}

Let $K$ be an algebraically closed field of characteristic 0.
As in the case of the rank 4 quadric, we assume that the quadratic form is normalized, here $Q = XZ-Y^2$.
Let $\mathcal{C}$ be the canonical embedding of a non-hyperelliptic curve of genus 4 defined by $Q$ and an irreducible cubic form $E$.

Let \[\fonction{\varphi}{\mathbb{P}_K(1,1,2)}{\mathbb{P}^3_K}{[s:t:w]}{[s^2:st:t^2:w]},\]
where $\mathbb{P}_K(1,1,2)$ is the weighted projective space with weights $(1,1,2)$.

It is clear that $\varphi$ is an isomorphism from $\mathbb{P}_K(1,1,2)$ to $\{Q = 0\}$.
Therefore, as for the case of rank 4, we realize the curve $\mathcal{C}$ in $\mathbb{P}_K(1,1,2)$ as the pullback of the cubic $\{E=0\}$.
The pullback of this cubic is defined by a homogeneous polynomial of degree $6$ in $s,t,w$. We will write such a sextic generically as
\[f(s,t,w) = f_0w^3+f_2(s,t)w^2+f_4(s,t)w+f_6(s,t),\]
with $f_i\in K[s,t]$ homogeneous of degree $i$.

One can wonder what it means for curves to be isomorphic in such a space. Like
for the case of rank 4, the action of the linear form becomes trivial, and the subgroup of $\mathrm{PGL}_4(K)$ which preserves $Q$
is isomorphic to $\mathrm{\mathbb{P}_K(1,1,2)}$.

Moreover, we observe that $\mathbb{P}_K(1,1,2) = \mathrm{Proj}(K[s,t,w])$, with $s,t$ variables of weight 1, and $w$ of weight 2.
In \cite{alamrani}, it is shown that $\mathrm{Aut}(\mathbb{P}_K(1,1,2))\simeq \mathrm{Aut}(K[s,t,w])/K^\times$.

\begin{lemma}
    We have $\mathrm{Aut}(K[s,t,w])\simeq (\mathrm{GL}_2(K)\times K^\times)\ltimes K^3$, where
    $\mathrm{GL}_2(K)$ and $K^\times$ act naturally on $s,t$ and $w$ respectively, and $(a,b,c)\in K^3$ acts on $w$ as $w+as^2+bst+ct^2$.
\end{lemma}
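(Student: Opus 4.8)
The plan is to regard $\mathrm{Aut}(K[s,t,w])$ as the group of \emph{graded} $K$-algebra automorphisms of $A := K[s,t,w]$, where $s,t$ have weight $1$ and $w$ weight $2$, and to exploit the fact that such an automorphism $\phi$ preserves each graded component $A_d$. The key observation is that the low-degree pieces are small: $A_1 = \langle s,t\rangle_K$ is two-dimensional, while $A_2 = \langle s^2, st, t^2, w\rangle_K$ is four-dimensional. Since $\phi$ restricts to a linear automorphism of $A_1$, it must send $s,t$ to a pair of independent linear forms in $s,t$ alone (no $w$ can appear, for reasons of degree), which records an element $M \in \mathrm{GL}_2(K)$. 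Likewise $\phi(w) \in A_2$, so necessarily $\phi(w) = as^2 + bst + ct^2 + \lambda w$ for scalars $a,b,c,\lambda$. Thus every graded automorphism is encoded by a triple $(M, \lambda, (a,b,c))$, and the shape of the action on the generators is forced by the grading.

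Next I would pin down which triples occur and show that the encoding is a bijection onto the automorphism group. Writing the matrix of $\phi|_{A_2}$ in the basis $(s^2, st, t^2, w)$, it is block upper-triangular, with the $3\times 3$ block equal to $\mathrm{Sym}^2(M)$ (invertible, since $M$ is) and bottom-right entry $\lambda$; invertibility of $\phi|_{A_2}$ is therefore equivalent to $\lambda \in K^\times$. Conversely, any triple $(M, \lambda, (a,b,c)) \in \mathrm{GL}_2(K) \times K^\times \times K^3$ defines a graded endomorphism of $A$; since $s,t$ lie in its image (as $\phi|_{A_1}$ is invertible) and then $w = \lambda^{-1}(\phi(w) - as^2 - bst - ct^2)$ also lies in its image, the endomorphism is surjective. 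A graded endomorphism that is surjective is automatically bijective, because each $\phi|_{A_d}$ is then a surjective linear endomorphism of the finite-dimensional space $A_d$; hence $\phi$ is an automorphism. This gives a bijection between $\mathrm{Aut}(K[s,t,w])$ and $\mathrm{GL}_2(K) \times K^\times \times K^3$ as sets.

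It then remains to identify the group law. I would first check that the triples with $M = \mathrm{Id}$ and $\lambda = 1$, namely the maps $w \mapsto w + as^2 + bst + ct^2$ fixing $s,t$, form a subgroup isomorphic to $(K^3, +)$: composing two such maps simply adds the parameters. A direct conjugation computation then shows that this copy of $K^3$ is normal, the conjugate of a parameter $(a,b,c)$ by $(M,\lambda)$ being obtained by applying $\lambda^{-1}\mathrm{Sym}^2(M)$ to it. The triples with $(a,b,c) = 0$ form a complementary subgroup isomorphic to $\mathrm{GL}_2(K) \times K^\times$ (the $\mathrm{GL}_2(K)$-action on $s,t$ and the $K^\times$-scaling of $w$ commute), and every automorphism factors uniquely as such a map followed by a translation in $K^3$. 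Together these give the semidirect product decomposition $(\mathrm{GL}_2(K) \times K^\times) \ltimes K^3$ with the stated actions.

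The argument is essentially bookkeeping once the grading is invoked; I expect the only real subtleties to be the surjective-implies-bijective step, which must use finite-dimensionality of each graded piece rather than any general polynomial-ring fact, and the care needed to compute the conjugation action correctly so as to fix the direction of the semidirect product.
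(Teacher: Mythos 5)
Your argument is correct and follows essentially the same route as the paper: constrain $\phi(s),\phi(t),\phi(w)$ by the grading, check that every such triple yields an automorphism, and read off the semidirect product structure. The only cosmetic differences are that the paper exhibits the inverse explicitly where you invoke surjectivity on the finite-dimensional graded pieces, and that you carry out the conjugation computation (the $\lambda^{-1}\mathrm{Sym}^2(M)$ action on $K^3$) that the paper leaves implicit.
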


\begin{proof}
    Let $\psi\in \mathrm{Aut}(K[s,t,w])$. Since $s$ and $t$ are the only elements of degree $1$,
    $\psi$ acts on $s,t$ via an element of $\mathrm{GL}_2(K)$.
    There only remains to know the image of $w$. Since $\psi(w)$ is a degree 2 element,
    it must be of the form $\alpha w+as^2+bst+ct^2$, with $\alpha\in K^\times$, and $a,b,c\in K$.

    Conversely, let $\psi$ be the $K[s,t,w]$ algebra morphism defined by
    $\psi(s,t)=M(s,t)$, with $M\in \mathrm{GL}_2(K)$, and $\psi(w)=\alpha w+as^2+bst+ct^2$,
    with $\alpha\in K^\times$, and $a,b,c\in K$. We can define an inverse for $\psi$:
    let $(s',t') = \psi(s,t)$ and $w'=\psi(w)$.
    We define $(s,t) = \psi^{-1}(s',t') = M^{-1}(s',t')$, and $w = \psi^{-1}(w') = 1/\alpha(w'-as^2-bst-ct^2)$.

    The action of $\mathrm{GL}_2(K)\times K^\times$ does not commute with the action of $K^3$.
    We get $\mathrm{Aut}(K[s,t,w])\simeq (\mathrm{GL}_2(K)\times K^\times)\ltimes K^3$.
\end{proof}

Let $f_0$ be the coefficient of $w^3$ in $f$. If $f_0$ is 0, the curve defined by $f$ has a singularity,
so we assume that $f_0 \neq 0$.
Up to rescaling, we have $f = w^3+f_2(s,t)w^2+f_4(s,t)w+f_6(s,t)$.
If we make the change of variables $w = w'-f_2(s',t')/3$, the term in $w^2$ vanishes,
and we get $f = w^3+f_4(s,t)w+f_6(s,t)$.

We call this form the canonical form of $f$. The only action which preserves a canonical form 
is the action of $\mathrm{GL}_2(K)$ on $s,t$.

Hence, we reduced to the problem of finding invariants of $V_4\oplus V_6$ under the action of $\mathrm{SL}_2(K)$,
where $V_n$ is the space of binary forms of degree $n$.
This problem was studied and solved by Olive.

\begin{theorem}[\protect{\cite[Table~6]{olive-gordan}}]
    The algebra $K[V_4\oplus V_6]^{\mathrm{SL}_2(K)}$ is generated by 60 invariants.
\end{theorem}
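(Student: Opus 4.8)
The plan is to mirror the four-step strategy used above for $K[W_3]^G$, now for the reductive group $\mathrm{SL}_2(K)$ acting on $V_4\oplus V_6$. First I would note that $\mathrm{SL}_2(K)$ is linearly reductive in characteristic $0$ (as in Proposition~\ref{prop:w3macaulay}), so by Theorem~\ref{thm:hochster} the algebra $K[V_4\oplus V_6]^{\mathrm{SL}_2(K)}$ is Cohen--Macaulay; this is precisely what allows the Hilbert series, together with a homogeneous system of parameters, to control the generators. Since $\dim(V_4\oplus V_6)=12$ and $\dim\mathrm{SL}_2(K)=3$, the Krull dimension of the invariant algebra is $9$, so a homogeneous system of parameters consists of $9$ invariants.

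Next I would compute the Hilbert series. Because $\mathrm{SL}_2(K)$ has rank $1$, the weight/residue method of \cite[Section 4.6]{kemper} reduces to extracting the coefficient of $z^0$, in a single variable $z$, of the rational function
\[
g_0(z,t)=\frac{1-z^2}{\prod_{i=0}^{4}\bigl(1-z^{4-2i}t\bigr)\prod_{j=0}^{6}\bigl(1-z^{6-2j}t\bigr)}\,,
\]
where the exponents $4-2i$ and $6-2j$ are the torus weights of $V_4$ and $V_6$ respectively. This is a one-variable residue computation entirely analogous to Equation~\eqref{eq:hilb1}.

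Then I would determine the nullcone via the Hilbert--Mumford criterion (Proposition~\ref{Hilbert-Mumford}). The $1$-parameter subgroups of $\mathrm{SL}_2(K)$ are conjugate to $\mathrm{diag}(t^k,t^{-k})$, so a single destabilizing subgroup must work for both forms at once; hence $(p,q)\in\mathcal{N}_{V_4\oplus V_6}^{\mathrm{SL}_2(K)}$ if and only if, after a change of coordinates, there is a common linear form $\ell$ with $\ell^3\mid p$ and $\ell^4\mid q$ (a common root of multiplicity exceeding half the degree in each form). Exhibiting normal forms for such pairs as in Lemma~\ref{lem:class_eq_w3}, I would produce $9$ explicit invariants, defined as iterated transvectants of $p$ and $q$, whose common zero locus is exactly this nullcone; by Corollary~\ref{cor:hsopnullcone} these then form a homogeneous system of parameters.

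Finally, with the Hilbert series and the system of parameters in hand, the Cohen--Macaulay property bounds the degrees of the remaining generators, and I would generate invariants degree by degree as transvectants built from the classical covariant systems of the quartic and of the sextic, until the spanned space matches the Hilbert series in every degree up to the bound, exactly as in Corollary~\ref{cor:inv_sec2}; the count $60$ then emerges from this saturation. The main obstacle is this last verification: the joint covariant system of a quartic and a sextic is large, and a direct run of Gordan's algorithm would require the invariants of binary forms of degree up to $12$, not currently known. The practical difficulty is controlling the combinatorial explosion of candidate transvectants, which is why evaluating invariants on explicit biforms, rather than manipulating their generic symbolic expressions, is essential.
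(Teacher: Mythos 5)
You should first be aware that the paper does not prove this statement at all: it is imported verbatim from \cite[Table~6]{olive-gordan}, where the $60$ generators of $K[V_4\oplus V_6]^{\mathrm{SL}_2(K)}$ are obtained by running Gordan's algorithm for the joint covariants of a binary quartic and a binary sextic. So there is no internal proof to compare against; the relevant comparison is between your plan and Olive's method. Your plan transplants the Hilbert-series/nullcone/hsop/saturation strategy that the paper uses for $K[W_3]^G$, and as a strategy it is coherent: $\mathrm{SL}_2(K)$ is linearly reductive in characteristic $0$, so Theorem~\ref{thm:hochster} applies; the Krull dimension $12-3=9$ is correct; the Molien--Weyl integrand you write down is the right single-variable analogue of the one used for Equation~\eqref{eq:hilb1}; and the Hilbert--Mumford description of the nullcone (a common linear form $\ell$ with $\ell^3\mid p$ and $\ell^4\mid q$) is the classical one. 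What each approach buys: Gordan's algorithm produces the generators together with a termination certificate and needs no Hilbert series, whereas your route requires the Hilbert series but yields the homogeneous system of parameters and the free-module structure over it essentially for free.

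That said, as a proof your text establishes nothing: every substantive step is deferred (``I would compute'', ``I would produce $9$ explicit invariants'', ``until the spanned space matches''). The existence of $9$ invariants cutting out the nullcone set-theoretically, the case-by-case verification on normal forms, and the saturation to exactly $60$ generators are precisely the content of the theorem, and none of it is carried out or even made plausible by a degree count. There is also one concrete error: the remark that a direct run of Gordan's algorithm would require the unknown invariants of binary forms of degree $12$ is a confusion with Turnbull's Gordan algorithm for \emph{bicubic biforms}, the obstruction the paper mentions in Section~\ref{sec:tools}; it does not apply to the pair $V_4\oplus V_6$, for which Gordan's algorithm for joint covariants of binary forms is entirely practical --- indeed it is exactly how the cited result was obtained.
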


\begin{rem}
    In general, one does not work over algebraically closed fields.
    Let $K$ be a field of characteristic 0. Let $Q,E\in K[X,Y,Z,T]$ be irreducible homogeneous polynomials of degree 2 and 
    3 respectively, such that $Q$ is of rank 3. 
    Unlike the case of rank 4, the invariants of the curve defined by $Q$ and $E$ do not necessarily belong to $K$.
    They belong to (at most) a quadratic extension of $K$.
\end{rem}

%
%
%
%

\appendix
\section{Covariant tables}

\renewcommand{\arraystretch}{1.2}

\begin{table}[h]
    \begin{tabular}{|c|c|c|c|c|}
        \hline
        \diagbox[width=5.6em]{degree}{order} & 1 & 2 & 3 & 4\\
        \hline
        1 &  &  & $f$ & \\
        \hline
        2 &  & $h=(f,f)_2$ &  & $j=(f,f)_1$ \\
        \hline
        \multirow{2}{*}{3} & \multirow{2}{*}{$c_{31} = (h,f)_2$} &  & $c_{33,1} = (j,f)_2$ &  \\
        & & & $c_{33,2} = (h,f)_1$ & \\
        \hline
        \multirow{3}{*}{4} &  & $c_{42,1} = (h,h)_1$ &  & \multirow{2}{*}{$c_{44,1} = (c_{33,2},f)_1$} \\
        & & $c_{42,2} = (c_{31}, f)_1$ & & \multirow{2}{*}{$c_{44,2} = ((j,f)_1,f)_2$}\\
        & & $c_{42,3} = (c_{33,2}, f)_2$ & & \\
        \hline
        \multirow{3}{*}{5} & $c_{51, 1} = (c_{42,2}, f)_2$ &  & $c_{53, 1} = (c_{42,2}, f)_1$ &  \\
        & $c_{51,2} = (c_{44,1}, f)_3$& & $c_{53,2} = (c_{42, 3}, f)_1$ & \\
        & $c_{51,3} = (c_{44,2}, f)_3$ & & $c_{53,3} = ((f^3,f)_3, f)_3$ & \\
        \hline
        \multirow{3}{*}{6} &  & $c_{62,1} = (c_{53,1}, f)_2$ &  & \\
        & & $c_{62,2} = (c_{53,2}, f)_2$ & & \\
        & & $c_{62,3} = (c_{51,1}, f)_1$ & & \\
        \hline
        \multirow{3}{*}{7} & $c_{71,1} = (c_{62,1}, f)_2$ &  & \multirow{2}{*}{$c_{73,1} = (c_{62,2}, f)_1$} & \\
        & $c_{71,2} = (c_{51,1}, h)_2$ & & \multirow{2}{*}{$c_{73,2} = (c_{62,3}, f)_1$} & \\
        & $c_{71,3} = (c_{51,2}, h)_2$ & & & \\
        \hline
        \multirow{2}{*}{8} & & $c_{82,1} = (c_{71,1}, f)_1$ & & \multirow{2}{*}{$c_{84} = (c_{73,1}, f)_1$} \\
        & & $c_{82,2} = (c_{73,2}, f)_2$ & & \\
        \hline
        \multirow{2}{*}{9} & \multirow{2}{*}{$c_{91} = (c_{82,2}, f)_2$} & & $c_{93,1} = (c_{82,1}, f)_1$ & \\
        & & & $c_{93,2} = (c_{84}, f)_2$ & \\
        \hline
        10 & & $c_{102} = (c_{91}, f)_1$ & & \\
        \hline
        11 & $c_{111} = (c_{102}, f)_2$ & & $c_{113} = (c_{93,2}\cdot f, f)_3$ & \\
        \hline
    \end{tabular}
    \caption{Covariants required to compute the generators of $K[W_3]^G$}
    \label{fig:tab_cov}
\end{table}

\renewcommand{\arraystretch}{1.3}

\begin{table}
    \begin{tabular}{|c|c|}
        \hline
        Degree & Invariants \\
        \hline
        2 & $I_2 = (f,f)_3$ \\
        \hline
        \multirow{2}{*}{4} & $I_{4,1} = (h, h)_2$ \\
        & $I_{4,2} = (c_{33,1}, f)_3$ \\
        \hline
        \multirow{2}{*}{6} & $I_{6,1} = (c_{42,1}, h)_2$ \\
        & $I_{6,2} = (c_{53,3}, f)_3$ \\
        \hline
        \multirow{2}{*}{8} & $I_{8,1} = (c_{42,1}, c_{42,1})_2$ \\
        & $I_{8,2} = (c_{73,1}, f)_3$ \\
        \hline
        10 & $I_{10} = (c_{93,1}, f)_3$\\
        \hline
        12 & $I_{12} = (c_{113}, f)_3$\\
        \hline
        14 & $I_{14} = (c_{111}\cdot h, f)_3$\\
        \hline
    \end{tabular}
    \caption{A homogeneous system of parameters for $K[W_3]^G$}
    \label{fig:tab_hsop}
\end{table}

\renewcommand{\arraystretch}{1.3}

\begin{table}
    \begin{tabular}{|c|cc|}
        \hline
        Degree & \multicolumn{2}{c|}{Invariants} \\
        \hline
        2 & \multicolumn{2}{c|}{$I_2 = (f,f)_3$} \\
        \hline
        4 & $I_{4,1} = (h, h)_2$ & $I_{4,2} = (c_{33,1}, f)_3$ \\
        \hline
        \multirow{2}{*}{6} & $I_{6,1} = (c_{42,1}, h)_2$ & $I_{6,2} = (c_{53,3}, f)_3$ \\
        & \multicolumn{2}{c|}{$j_{6,1} = (c_{31}, c_{31})_1$} \\
        \hline
        \multirow{2}{*}{8} & $I_{8,1} = (c_{42,1}, c_{42,1})_2$ & $I_{8,2} = (c_{73,1}, f)_3$ \\
        & $j_{8,1} = (c_{31}, c_{51,1})_1$ & $j_{8, 2} = (c_{31}, c_{51,2})_1$\\
        \hline
        \multirow{4}{*}{10} & $I_{10} = (c_{93,1}, f)_3$ & $j_{10,1} = (c_{51,1}, c_{51,1})_1$ \\
        & $j_{10,2} = (c_{51,1}, c_{51,2})_1$ & $j_{10,3} = (c_{51,1}, c_{51,3})_1$ \\
        & $j_{10,4} = (c_{51,2}, c_{51,2})_1$ & $j_{10,5} = (c_{51,2}, c_{51,3})_1$\\
        & \multicolumn{2}{c|}{$j_{10,6} = (c_{51,3}, c_{51,3})_1$} \\
        \hline
        \multirow{5}{*}{12} & $I_{12} = (c_{113}, f)_3$ & $j_{12,1} = (c_{71,1}, c_{51,1})_1$ \\
        & $j_{12,2} = (c_{71,1}, c_{51,2})_1$ & $j_{12,3} = (c_{71,1}, c_{51,3})_1$ \\
        & $j_{12,4} = (c_{71,2}, c_{51,1})_1$ & $j_{12,5} = (c_{71,2}, c_{51,2})_1$ \\
        & $j_{12,6} = (c_{71,2}, c_{51,3})_1$ & $j_{12,7} = (c_{51,1}\cdot c_{31}^2, f)_1$ \\
        & $j_{12,8} = (c_{51,2}\cdot c_{31}^2, f)_1$ & $j_{12,9} = (c_{51,3}\cdot c_{31}^2, f)_1$ \\
        \hline
        \multirow{7}{*}{14} & $I_{14} = (c_{111}\cdot h, f)_3$ & $j_{14,1} = (c_{71,1}, c_{71,1})_1$ \\
        & $j_{14,2} = (c_{71,1}, c_{71,2})_1$ & $j_{14,3} = (c_{71,1}, c_{71,3})_1$ \\
        & $j_{14,4} = (c_{71,2}, c_{71,3})_1$ & $j_{14,5} = (c_{71,3}, c_{71,3})_1$ \\
        & $j_{14,6} = (c_{51,1}^2\cdot c_{31}, f)_3$ & $j_{14,7} = (c_{51,1}\cdot c_{51,2}\cdot c_{31}, f)_3$ \\
        & $j_{14,8} = (c_{51,1}\cdot c_{51,3}\cdot c_{31}, f)_3$ & $j_{14,9} = (c_{51,2}^2\cdot c_{31}, f)_3$ \\
        & $j_{14,10} = (c_{51,2}\cdot c_{51,3}\cdot c_{31}, f)_3$ & $j_{14,11} = (c_{51,3}^2\cdot c_{31}, f)_3$ \\
        & \multicolumn{2}{c|}{$j_{14,12} = (c_{71,1}\cdot c_{31}^2, f)_3$}\\
        \hline
        \multirow{7}{*}{16} & $j_{16,1} = (c_{71,1}\cdot c_{51,1}\cdot c_{31}, f)_3$ & $j_{16,2} = (c_{71,1}\cdot c_{51,2}\cdot c_{31}, f)_3$\\
        & $j_{16,3} = (c_{71,1}\cdot c_{51,3}\cdot c_{31}, f)_3$ & $j_{16,4} = (c_{71,2}\cdot c_{51,1}\cdot c_{31}, f)_3$ \\
        & $j_{16,5} = (c_{51,1}^3, f)_3$ & $j_{16,6} = (c_{51,1}^2\cdot c_{51,2}, f)_3$ \\
        & $j_{16,7} = (c_{51,1}^2\cdot c_{51,3}, f)_3$ & $j_{16,8} = (c_{51,1}\cdot c_{51,2}^2, f)_3$ \\
        & $j_{16,9} = (c_{51,1}\cdot c_{51,2}\cdot c_{51,3}, f)_3$ & $j_{16,10} = (c_{51,1}\cdot c_{51,3}^2, f)_3$ \\
        & $j_{16,11} = (c_{51,2}^3, f)_3$ & $j_{16,12} = (c_{51,2}^2\cdot c_{51,3}, f)_3$ \\
        & $j_{16,13} = (c_{51,2}\cdot c_{51,3}^2, f)_3$ & $j_{16,14} = (c_{51,3}^3, f)_3$ \\
        \hline
        \multirow{6}{*}{18} & $j_{18,1} = (c_{71,1}^2\cdot c_{31}, f)_3$ & $j_{18,2} = (c_{71,1}\cdot c_{71,2}\cdot c_{31}, f)_3$ \\
        & $j_{18,3} = (c_{71,2}^2\cdot c_{31}, f)_3$ & $j_{18,4} = (c_{71,1}\cdot c_{51,1}^2, f)_3$ \\
        & $j_{18,5} = (c_{71,1}\cdot c_{51,1}\cdot c_{51,2}, f)_3$ & $j_{18,6} = (c_{71,1}\cdot c_{51,1}\cdot c_{51,3}, f)_3$ \\
        & $j_{18,7} = (c_{71,1}\cdot c_{51,2}^2, f)_3$ & $j_{18,8} = (c_{71,1}\cdot c_{51,2}\cdot c_{51,3}, f)_3$ \\
        & $j_{18,9} = (c_{71,1}\cdot c_{51,3}^2, f)_3$ & $j_{18,10} = (c_{71,2}\cdot c_{51,1}\cdot c_{51,2}, f)_3$ \\
        & \multicolumn{2}{c|}{$j_{18,11} = (c_{71,2}\cdot c_{51,2}^2, f)_3$} \\
        \hline
    \end{tabular}
    \caption{A generating set of invariants for $K[W_3]^G$}
    \label{fig:tab_inv}
\end{table}

\clearpage

\printbibliography

\end{document}